\newtheorem*{assumption}{Assumption}
\newcommand{\sing}{\mathrm{sing}}
\newcommand{\bl}{\operatorname{bl}}
\newcommand{\Jac}{\operatorname{Jac}}
\newcommand{\Pic}{\mathrm{Pic}}
\newcommand{\Sym}{\mathrm{Sym}}
\newcommand{\Sec}{\operatorname{Sec}}
\newcommand{\pr}{\mathrm{pr}}
\newcommand{\restr}[1]{\big\vert_{#1}}
\DeclareMathOperator{\Ker}{Ker}
\DeclareMathOperator{\id}{id}
\theoremstyle{plain} 
\newtheorem{thm}{Theorem}[section]
\newtheorem{lemma}[thm]{Lemma}
\newtheorem{prop}[thm]{Proposition}
\newtheorem{corollary}[thm]{Corollary}
\newtheorem{claim}[thm]{Claim}
\theoremstyle{definition}
\newtheorem{notation}[thm]{Notation}
\newtheorem{definition}[thm]{Definition}
\newtheorem{remark}[thm]{Remark}
\newtheorem{problem}[thm]{Problem}
\newtheorem{fact}[thm]{Fact}
\newtheorem{question}[thm]{Question}
\newcommand{\C}{\mathbb{C}}
\newcommand{\cD}{\mathcal{D}}
\newcommand{\cI}{\mathcal{I}}
\newcommand{\cL}{\mathcal{L}}
\newcommand{\cO}{\mathcal{O}}
\newcommand{\bP}{\mathbf{P}}
\newcommand{\fm}{\mathfrak{m}}
\newcommand{\sD}{\mathscr{D}}
\newcommand{\sE}{\mathscr{E}}
\newcommand{\ctbl}[1]{T^{\ast}_{#1}}
\newcounter{tmp}
\begin{document}

\title[Log resolution for hyperelliptic theta divisors]
{A log resolution for the theta divisor of a hyperelliptic curve}

\author{Christian Schnell and Ruijie Yang}
\begin{abstract}

In this paper, we prove that the theta divisor of a smooth hyperelliptic curve has a
natural and explicit embedded resolution of singularities using iterated blowups of
Brill-Noether subvarieties. We also show that the Brill-Noether stratification of the hyperelliptic Jacobian is a Whitney stratification.
\end{abstract}
\maketitle
\setcounter{tocdepth}{1}

\section*{Introduction}

Let $C$ be a smooth projective curve of genus $g \geq 1$. Let $\Jac(C)$ be the
Jacobian of $C$, and let $\Theta \subseteq \Jac(C)$ be the theta divisor. 
The purpose of this paper is to give a natural and explicit log resolution of the
pair $(\Jac(C),\Theta)$ when $C$ is a hyperelliptic curve.

Recall that the Brill-Noether variety $W^r_{g-1}(C)$ parametrizes line bundles $L\in
\Pic^{g-1}(C)$ of degree $g-1$ with $h^0(L)\geq r+1$. According to a theorem by
Riemann, we can choose an isomorphism $\Jac(C)\cong \Pic^{g-1}(C)$ so that the theta
divisor $\Theta$ becomes identified with $W_{g-1}(C)\colonequals W^0_{g-1}(C)$. The
Abel-Jacobi map from the symmetric product $C_{g-1} \colonequals \Sym^{g-1}(C)$ gives a
resolution of singularities of
$\Theta$, which is useful for answering many geometric questions about Jacobian varieties.
However, if one wants to investigate the geometry of the embedding $\Theta\subseteq
\Jac(C)$, one needs instead a log resolution of the pair $(\Jac(C),\Theta)$.
Inspired by a global study of the vanishing cycle functor for divisors \cite{SY23}, we are lead to the question of finding an explicit log resolution in the
case of hyperelliptic theta divisors. Since the log resolution is of a purely geometric
nature, we leave the actual computation of vanishing cycles to another paper \cite{SY23}.

When $C$ is a hyperelliptic curve of genus $g \geq 1$, we have a lot of very precise
information about the chain of subvarieties
\begin{align}\label{eqn: Brill-Noether sequence}
\Theta=W_{g-1}(C) \supseteq W^1_{g-1}(C) \supseteq \cdots \supseteq W^{n}_{g-1}(C),
\end{align}
where $n = \bigl\lfloor \frac{g-1}{2} \bigr\rfloor$ is the maximal integer such that
$W^n_{g-1}(C)\neq \varnothing$. First, the dimension of $W^r_{g-1}(C)$ is equal to
$g-1-2r$ and $W^r_{g-1}(C)$ is reduced (see Proposition \ref{prop: reducedness of Wrd}). Second, the singular locus of $W^r_{g-1}(C)$ is exactly
$W^{r+1}_{g-1}(C)$. Third, the multiplicity of the theta divisor at a
point $L \in \Pic^{g-1}(C)$ is equal to $h^0(L)$ by the Riemann singularity theorem,
and so $W^r_{g-1}(C)$ is exactly the set of points of multiplicity $\geq r+1$ on $\Theta$ (see \cite[Chapter IV, \S 4]{ACGH} for details).

These facts immediately suggest that one might be able to get a log resolution of
the pair $(\Jac(C), \Theta)$ by successively blowing up the Brill-Noether subvarieties
$W_{g-1}^r(C)$ in the order from smallest to largest. This guess turns out to be correct, but it requires
quite a bit of work to prove rigorously that it works. 

More precisely, we use the following iterative procedure, consisting of $n$ steps. In
the first step, we blow up $\Jac(C)$ along the smallest subvariety $W^n_{g-1}(C)$,
and denote the blowup by $\pi_1 \colon \bl_1(\Jac(C)) \to \Jac(C)$. In the second
step, we blow up $\bl_1(\Jac(C))$ along the strict transform of $W^{n-1}_{g-1}(C)$,
and denote the new blowup by $\pi_2 \colon
\bl_2(\Jac(C)) \to \Jac(C)$. In the $i$-th
step, we blow up $\bl_{i-1}(\Jac(C))$ along the strict transform of
$W^{n+1-i}_{g-1}(C)$, and denote the new blowup by $\pi_i \colon \bl_i(\Jac(C)) \to
\Jac(C)$. This process stops after the $n$-th step. The strict transforms of the exceptional divisor give us a sequence
of divisors $Z_0, Z_1, \dotsc, Z_{n-1}$, with $Z_i$ sitting over the locus
$W_{g-1}^{n-i}(C)$. Let $\tilde{\Theta}$ denote the
strict transform of the theta divisor. With this notation, our main result is the following.

\begingroup
\setcounter{tmp}{\value{thm}}
\setcounter{thm}{0} 
\renewcommand\thethm{\Alph{thm}}
\begin{thm}\label{theorem: log resolution of hyperelliptic theta divisors}
The morphism $\pi_n:\bl_{n}(\Jac(C))\to \Jac(C)$ 
is a log resolution of $(\Jac(C),\Theta)$, where
\[\pi_n^{\ast}(\Theta)=\tilde{\Theta}+\sum_{i=0}^{n-1} (n+1-i) Z_i\] 
is a divisor with simple normal crossing support. Moreover, the strict transform of $W^{n-i}_{g-1}(C)$ in $\bl_i(\Jac(C))$ is smooth. In other words, each
blowup in the sequence is a blowup along a smooth center.  
\end{thm}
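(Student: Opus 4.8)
The plan is to reduce the global assertion to a uniform local normal form along each Brill--Noether stratum, and then resolve that model by an explicit staircase of blowups. Fix $0\le j\le n$ and a point $p\in W^j_{g-1}(C)\setminus W^{j+1}_{g-1}(C)$. Since $\Sing W^{j}_{g-1}(C)=W^{j+1}_{g-1}(C)$, the stratum $W^j_{g-1}(C)$ is smooth at $p$, and the deeper centers $W^{j+1}_{g-1}(C),\dots,W^n_{g-1}(C)$ avoid a neighborhood of $p$, so the first $n-j$ blowups are local isomorphisms there. The first claim is that, in suitable local analytic coordinates, there is an isomorphism near $p$
\[
\bigl(\Jac(C),\Theta\bigr)\ \cong\ \bigl(\mathbb{A}^{2j+1}\times\mathbb{A}^{\,g-1-2j},\ \{\det M_j=0\}\times\mathbb{A}^{\,g-1-2j}\bigr),
\]
where $M_j=(x_{a+b})_{0\le a,b\le j}$ is the generic $(j+1)\times(j+1)$ Hankel (catalecticant) matrix in coordinates $x_0,\dots,x_{2j}$, under which $W^m_{g-1}(C)$ becomes $\{\operatorname{rank} M_j\le j-m\}\times\mathbb{A}^{\,g-1-2j}$ for $0\le m\le j$. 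Because the catalecticant configuration is homogeneous, this identification is uniform over the whole locally closed stratum, so it suffices to treat one $p$ on each $W^j_{g-1}(C)\setminus W^{j+1}_{g-1}(C)$, and these exhaust $\Jac(C)$.

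To produce this model I would start from the tangent cone. By the Riemann singularity theorem $\mult_p\Theta=h^0(L)=j+1$, and by the Riemann--Kempf description the projectivized tangent cone to $\Theta$ at $L=jH+E$ is the union of the spans of the divisors of $|L|$ under the canonical map. For a hyperelliptic curve the canonical map realizes $C$ as a double cover of a rational normal curve $\Gamma\subseteq\P^{g-1}$ of degree $g-1$, and $|L|=|jH|+E$; projecting from the span of the image of $E$ identifies this tangent cone with the join of a fixed linear space and the $(j-1)$-st secant variety $\Sec^{j-1}(\overline{\Gamma})$ of a rational normal curve $\overline{\Gamma}\subseteq\P^{2j}$ of degree $2j$. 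Since $\Sec^{j-1}(\overline{\Gamma})$ is precisely the catalecticant hypersurface $\{\det M_j=0\}$, whose secant filtration is the rank filtration of $M_j$, this recovers the tangent cone and the stratification on the nose. Upgrading this from the tangent cone to an honest local analytic isomorphism, so that strict transforms and not merely tangent cones are controlled, is the technical heart; I would carry it out by an explicit computation with the theta function adapted to the hyperelliptic structure, or equivalently by analyzing the Abel--Jacobi map and the $\P^1$-fibers that its resolution contracts over the deeper strata.

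Granting the normal form, the theorem becomes a statement about the catalecticant model that can be checked by direct computation in the transverse slice $\mathbb{A}^{2j+1}$. The blowups visible near $p$ are those of the rank loci $\{\operatorname{rank} M_j\le 0\}\subseteq\cdots\subseteq\{\operatorname{rank} M_j\le j-1\}$, taken from smallest to largest, while the top hypersurface $\{\det M_j=0\}$ is never blown up. Two facts must be verified. First, blowing up the vertex of the affine cone over a smooth projective variety produces the total space of a line bundle, hence is smooth; so after blowing up the origin the strict transform of the next rank locus (an affine cone over $\overline{\Gamma}$) is smooth, and inductively each center becomes smooth once its predecessors have been blown up. Second, one tracks the multiplicity of the total transform of $\{\det M_j=0\}$ along each exceptional divisor; since every center is smooth, this equals the multiplicity of the current strict transform along that center, which for $\{\operatorname{rank} M_j\le j-m\}=W^m_{g-1}(C)$ equals $m+1$, matching the coefficient $n+1-i$ of $Z_i$ over $W^{n-i}_{g-1}(C)$. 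The residual simple normal crossing check for $\tilde{\Theta}$ together with the exceptional divisors is then an explicit calculation on the resolved catalecticant.

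Finally I would assemble the global statement by induction on the number of blowups: after the $i$-th stage the centers $W^n_{g-1}(C),\dots,W^{n+1-i}_{g-1}(C)$ have been blown up, which by the local model resolves $\Sing W^{n-i}_{g-1}(C)=W^{n+1-i}_{g-1}(C)$, so the strict transform of $W^{n-i}_{g-1}(C)$ is smooth, which is precisely the stated clause, and step $i+1$ may legitimately blow it up; the coefficients and the crossing structure propagate through the tower by the local computation above. I expect the main obstacle to be the normal-form step: transferring the tangent-cone identification into a genuine analytic product decomposition that is valid at every point of each locally closed stratum and simultaneously compatible with all the deeper strata. Once that is in place, the rest is a concrete, if somewhat involved, calculation with secant varieties of rational normal curves.
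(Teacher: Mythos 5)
Your reduction strategy stands or falls with the local normal form, and that is where the genuine gap lies: you need that near every point of $W^j_{g-1}(C)\setminus W^{j+1}_{g-1}(C)$ the pair $(\Jac(C),\Theta)$ is \emph{analytically} isomorphic to $\bigl(\C^{2j+1},\{\det M_j=0\}\bigr)\times \C^{\,g-1-2j}$, i.e.\ that $\Theta$ is locally a product of a smooth factor with the cone over its projectivized tangent cone. What the hyperelliptic Petri-map computation actually gives (see the proof of Proposition \ref{prop: reducedness of Wrd} in the appendix) is that locally $\Theta=\{\det A=0\}$ where $A$ is a matrix of holomorphic functions whose \emph{linear part} is the catalecticant $M_j$; this identifies the tangent cone, but a hypersurface is in general not analytically isomorphic to its tangent cone, and it is not analytically locally trivial along its multiplicity strata just because the strata are smooth. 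Passing from the tangent cone to your model requires killing the higher-order part of $A$ by coordinate changes and row/column operations, a nontrivial determinacy/linearization statement for matrices of functions for which you offer no argument beyond "an explicit computation with the theta function." For the multiplicity-two stratum the claim can be rescued: the splitting lemma puts $\Theta$ in the form $\{q(x_1,x_2,x_3)+h(y)=0\}$ with $q$ of rank $3$ and $h$ of order $\geq 3$, and the requirement that $\Sing\Theta$ contain the $(g-3)$-dimensional stratum forces $h\equiv 0$. But for multiplicity $\geq 3$ there is no such splitting principle, and neither the paper nor the sources you invoke establish analytic equivalence of hyperelliptic theta divisors with their tangent cones along deep strata; the paper itself only proves the Whitney conditions (Proposition \ref{prop: Brill Noether are Whitney}), which give topological (Thom--Mather), not analytic, triviality along strata and are strictly weaker than what you need. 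So the central step of your proof is unproven, and it is not even clear that it is true.

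This is precisely the point the paper's architecture is designed to avoid. The paper never writes down a local model of the pair: it replaces the singular centers $W^r_{g-1}(C)$ by their smooth Abel--Jacobi resolutions $C_{g-1-2r}$ and runs Bertram's chain formalism, reducing Theorem \ref{theorem: log resolution of hyperelliptic theta divisors} to Propositions \ref{prop: chain of Abel Jacobi maps} and \ref{prop: chain of addition maps}, which are proved by induction using scheme-theoretic preimage criteria (Lemma \ref{lemma: injective map imply smooth chain and normal crossing}) and fiberwise conormal-bundle computations (Lemmas \ref{lemma: intersection}, \ref{lemma: conormal of addition maps}, \ref{lemma: conormal of abel jacobi maps}). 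Those computations identify the exceptional data over each stratum with Bertram's secant bundles $B^{k}(\cO_{\bP^1}(g-1-h_{\ast}D))$ \emph{relatively} over the stratum, which is exactly the information needed to run the blowups, without ever trivializing the ambient pair. The parts of your plan that do not depend on the normal form are sound and parallel the paper: the multiplicity coefficients $n+1-i$ follow from the Riemann singularity theorem at generic points of the strata (this is the paper's own argument), and the resolution of the catalecticant model is Bertram's theorem. But as written, the proof is incomplete at its load-bearing step.
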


We can also describe the generic structure of the exceptional divisors.

\begin{corollary}\label{corollary: generic structure of exceptional divisors}
	For $r = 1, \dotsc, n$, every fiber of the projection
	\[
		Z_{n-r} \setminus (\tilde{\Theta}\cup \bigcup_{\substack{0\leq i\leq n-1 \\ i\neq n-r}}Z_i)
		\to W_{g-1}^{r}(C) \setminus W_{g-1}^{r+1}(C)
	\]
	is isomorphic to the complement of a hypersurface of degree $r+1$ in $\bP^{2r}$;
	the hypersurface is the $(r-1)^{
	\mathrm{th}}$ secant variety of a rational normal curve of degree $2r$ in $\bP^{2r}$.
\end{corollary}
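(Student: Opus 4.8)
The plan is to work fiber by fiber over a general point $L\in W^r_{g-1}(C)\setminus W^{r+1}_{g-1}(C)$, for which $h^0(L)=r+1$, and to read off both the fiber $\bP^{2r}$ and the hypersurface removed from it from the geometry of the tangent cone of $\Theta$ at $L$. First I would localize at such an $L$. Since $L\notin W^{r+1}_{g-1}(C)$, the centers $W^n_{g-1}(C),\dots,W^{r+1}_{g-1}(C)$ blown up in the first $n-r$ steps avoid a neighborhood of $L$; hence $\bl_{n-r}(\Jac(C))\to\Jac(C)$ is an isomorphism there, and none of $Z_0,\dots,Z_{n-r-1}$ meet the fiber over $L$. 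The divisor $Z_{n-r}$ is created in the next step by blowing up the strict transform of $W^r_{g-1}(C)$, which is smooth by Theorem \ref{theorem: log resolution of hyperelliptic theta divisors}; since $\dim W^r_{g-1}(C)=g-1-2r$ (Proposition \ref{prop: reducedness of Wrd}), this center has codimension $2r+1$, so the fiber of the exceptional divisor over $L$ is $\bP(N_L)\cong\bP^{2r}$, where $N_L=T_L\Jac(C)/T_LW^r_{g-1}(C)$. This produces the $\bP^{2r}$ in the statement.

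Next I would identify the trace of $\tilde{\Theta}$ on this $\bP^{2r}$. Blowing up the smooth center makes $\tilde{\Theta}$ meet the exceptional fiber in the projectivization of the image of the tangent cone $\mathrm{TC}_L(\Theta)$ in $N_L$. By the Riemann--Kempf singularity theorem (see \cite{ACGH}), $\mathrm{TC}_L(\Theta)$ is the determinantal cone attached to the multiplication map $\mu\colon H^0(L)\otimes H^0(K\otimes L^{-1})\to H^0(K)$: a vector $\xi\in T_L\Jac(C)=H^0(K)^{\ast}$ lies in the cone exactly when the $(r+1)\times(r+1)$ bilinear form $(s,t)\mapsto\xi(\mu(s\otimes t))$ is singular. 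Here I would feed in the hyperelliptic geometry. Writing $L=r\,g^1_2+E$ with $E$ general of degree $g-1-2r$, the fixed divisor $E$ splits off and identifies both $H^0(L)$ and $H^0(K\otimes L^{-1})$ with the space $S_r=\Sym^r(\C^2)^{\ast}$ of binary forms of degree $r$ pulled back along the double cover $\pi\colon C\to\bP^1$, while $\mu$ becomes multiplication of binary forms $S_r\otimes S_r\to S_{2r}$ followed by multiplication with the fixed degree-$(g-1-2r)$ form $F$ cutting out $\pi(E)$. Thus the assignment $\xi\mapsto\bigl((s,t)\mapsto\xi(F\cdot st)\bigr)$ factors through a surjection $H^0(K)^{\ast}\to S_{2r}^{\ast}$, $\xi\mapsto\eta$, and the resulting matrix $(s,t)\mapsto\eta(st)$ is the $(r+1)\times(r+1)$ catalecticant (Hankel) matrix of $\eta$ under apolarity.

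The kernel of $\xi\mapsto\eta$ has dimension $g-(2r+1)=\dim T_LW^r_{g-1}(C)$, and I would check that it equals $T_LW^r_{g-1}(C)$ on the nose---equivalently, that the common vertex of $\mathrm{TC}_L(\Theta)$ is spanned by the fixed points $\phi_K(E)$ on the canonical rational normal curve $R_{g-1}\subseteq\bP^{g-1}$. Granting this, $N_L$ is identified with $S_{2r}^{\ast}\cong\Sym^{2r}(\C^2)$, the image of $R_{g-1}$ under projection from $\bP(T_LW^r_{g-1}(C))=\langle\phi_K(E)\rangle$ is the rational normal curve $R_{2r}$ of degree $2r$ in $\bP^{2r}$, and the projectivized tangent-cone trace is precisely the locus where the catalecticant of $\eta$ has rank $\le r$. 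This locus is exactly the $(r-1)$-th secant variety $\Sec_{r-1}(R_{2r})$, the vanishing of the catalecticant determinant, a hypersurface of degree $r+1$. I expect this on-the-nose identification---matching the Kempf matrix with the catalecticant and pinning the kernel down as $T_LW^r_{g-1}(C)$ rather than merely a space of the right dimension---to be the main obstacle.

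Finally I would dispose of the remaining boundary divisors. The later centers $W^{r-1}_{g-1}(C),\dots,W^1_{g-1}(C)$ pass through $L$ but are contained in $\Theta$, so their tangent cones at $L$ lie inside $\mathrm{TC}_L(\Theta)$; hence, inductively, the strict transforms blown up in steps $n-r+2,\dots,n$ and the resulting exceptional divisors $Z_{n-r+1},\dots,Z_{n-1}$ meet the fiber $\bP^{2r}$ only inside $\Sec_{r-1}(R_{2r})$, the projectivized image of $\mathrm{TC}_L(\Theta)$ in $\bP(N_L)$. Consequently, over $\bP^{2r}\setminus\Sec_{r-1}(R_{2r})$ every subsequent blowup is an isomorphism, and neither these $Z_j$ nor $\tilde{\Theta}$ meet the fiber there. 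Therefore the fiber over $L$ of the open subset of $Z_{n-r}$ described in the statement is exactly $\bP^{2r}\setminus\Sec_{r-1}(R_{2r})$, which is what we wanted.
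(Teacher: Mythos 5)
Your strategy is genuinely different from the paper's: the paper never touches tangent cones, but instead reads the corollary off from its inductive machinery (Claim \ref{claim: blowup restricting to exceptional divisors for Jac} and its relatives), which identifies the fiber of $Z_{n-r}$ over a point of the stratum with an iterated Bertram blowup of $\bP H^0(\bP^1,M)$, $M=\cO_{\bP^1}(g-1-h_{\ast}D)$, and then removes all exceptional loci at once via Remark \ref{remark: complement of union of divisors}. Your pointwise route through Riemann--Kempf and the hyperelliptic Petri map reproduces the same catalecticant/secant picture (compare Claim 1 in the paper's appendix) and would be more self-contained. However, as written it has genuine gaps, all stemming from one false principle.

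The principle you invoke twice is: after blowing up a smooth center $W$ through $L$, the strict transform of a variety $V\supseteq W$ meets the exceptional fiber $\bP(N_L)$ in the projectivization of the image of $\mathrm{TC}_L(V)$ in $N_L$. What the blowup actually produces is the projectivized normal cone $\bP(C_WV)$, and the fiber $(C_WV)_L$ can be strictly larger than the image of the tangent cone when the multiplicity of $V$ along $W$ jumps at $L$: for $V=\{x^2=yw^k\}\supseteq W=\{x=y=0\}$ in $\C^3$ with $k\geq 2$, the strict transform contains the \emph{entire} exceptional $\bP^1$ over $L=0$, while $\mathrm{TC}_0(V)=\{x^2=0\}$ projects to a single point of $\bP(N_0)$. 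For $V=\Theta$ your claim is true, but only because the Riemann singularity theorem gives equimultiplicity: every point of $W^r_{g-1}(C)\setminus W^{r+1}_{g-1}(C)$ has multiplicity exactly $r+1$ on $\Theta$, so the initial form of a local equation of $\Theta$ along the ideal of the center does not vanish identically at $L$, hence $(C_W\Theta)_L$ is cut out by that degree-$(r+1)$ form and coincides with the image of $\mathrm{TC}_L\Theta$. This needs to be said; it is the crux of your step identifying $\tilde\Theta\cap\bP^{2r}$. For the later centers the principle is unusable as stated: $W^{r-1}_{g-1}(C),\dots,W^1_{g-1}(C)$ are singular along $W^r_{g-1}(C)$, and nothing ties their normal-cone fibers at $L$ to their tangent cones. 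The correct argument is either monotonicity of normal cones, $C_{W^r}W^{j}\subseteq C_{W^r}\Theta$ because $W^j_{g-1}(C)\subseteq\Theta$, or, simpler, the observation that every later center is contained in the successive strict transforms of $\Theta$. The latter also closes a further, unaddressed gap: you show the fiber in the statement \emph{contains} $\bP^{2r}\setminus\Sec_{r-1}(R_{2r})$, but not the reverse inclusion that every point of the final fiber lying over $\Sec_{r-1}(R_{2r})$ belongs to $\tilde\Theta\cup Z_{n-r+1}\cup\dots\cup Z_{n-1}$; this follows because that union contains the full preimage, under the later blowdowns, of the strict transform of $\Theta$ at the stage where $Z_{n-r}$ was created, and that strict transform cuts the $\bP^{2r}$ exactly in $\Sec_{r-1}(R_{2r})$.
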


There are a few other examples in the literature where this simple-minded procedure
of successive blowups along singular loci produces a log resolution.
\begin{enumerate}
	 \item Let $X$ be the affine space of $n$-by-$n$ matrices and let $D$ be the hypersurface
		 defined by the vanishing of the determinant function. Let $D_i \subseteq D$ be the set
		 of matrices of rank $\leq n-i$. According to \cite[p.~69]{ACGH}, one has
		 $(D_i)_{\sing} = D_{i+1}$, and $D_i$ is exactly the set of points of
		 multiplicity $\geq i$ on $D$. It is proved in \cite[Chapter 4]{Johnson} and also in
		 \cite{Vainsencher} that one can construct a log
		 resolution of the pair $(X,D)$ by successively blowing
		 up $D_n, D_{n-1}, \dotsc, D_2$.
    \item Let $X=\mathbf{P} H^0(C,M)$ and let $D=\Sec^n(C)$ be the $n$-th secant
		 variety of a smooth projective curve $C$, embedded by a line bundle $M$ with
		 $h^0(M) = 2n+3$ that separates $2n+2$ points. Setting $D_i = \Sec^{n-i+1}(C)$,
		 Bertram \cite[Page 440]{Bertram92} proved that $(D_i)_{\sing} = D_{i+1}$ and
		 that $D_i$ is again the set of points of multiplicity $\geq i$ on $D$. He also
		 showed \cite[Corollary 2.4]{Bertram92} that successively blowing up $D_n,
		 D_{n-1}, \dotsc, D_2$ produces a log resolution of the pair $(X,D)$.
\end{enumerate}

The subvarieties in \eqref{eqn: Brill-Noether sequence} induces a stratification  \[ \Jac(C)=(\Jac(C)-\Theta)\sqcup \bigsqcup_{0\leq r\leq n} (W^r_{g-1}(C)-W^{r+1}_{g-1}(C)), \]
which is named the Brill-Noether stratification. Inspired by the proof of Theorem \ref{theorem: log resolution of hyperelliptic theta divisors}, we also find the following.
\begin{prop}\label{prop: Brill Noether are Whitney}
If $C$ is a smooth hyperelliptic curve, then the Brill-Noether stratification of $\Jac(C)$ is a Whitney stratification.
\end{prop}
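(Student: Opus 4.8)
The plan is to verify Whitney's conditions (a) and (b) for every pair of strata. Since condition (b) implies (a), and since any pair whose larger member is the open stratum $U = \Jac(C)\setminus\Theta$ is automatic (there $T_pU = T_p\Jac(C)$ is the full tangent space, so every limiting plane and every limiting secant line is contained in it), it suffices to treat a pair $(S_r, S_{r'})$ with $S_r = W^r_{g-1}(C)\setminus W^{r+1}_{g-1}(C)$, $S_{r'} = W^{r'}_{g-1}(C)\setminus W^{r'+1}_{g-1}(C)$ and $r < r'$, the smaller stratum lying in $\overline{S_r} = W^r_{g-1}(C)$. Each $S_r$ is smooth of dimension $g-1-2r$: it is exactly the smooth locus of $W^r_{g-1}(C)$, because $\bigl(W^r_{g-1}(C)\bigr)_{\sing} = W^{r+1}_{g-1}(C)$. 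As Whitney regularity is local and analytic, I would fix a point $q\in S_{r'}$, where $h^0(L_q)=r'+1$, and analyze the germ of the entire Brill--Noether stratification \eqref{eqn: Brill-Noether sequence} at $q$.

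The first step is to pin down the local model. By the determinantal description of the Brill--Noether loci (cf.\ the vector-bundle map over $\Pic^{g-1}(C)$ recalled in the Introduction), near $q$ the chain \eqref{eqn: Brill-Noether sequence} is cut out by the rank loci of a single holomorphic matrix $M$ of size $(r'+1)\times(r'+1)$ with $M(q)=0$ and $h^0(L')=(r'+1)-\operatorname{rank}M(L')$, so that $W^{r''}_{g-1}(C)=\{\operatorname{rank}M\le r'-r''\}$ locally. Because $C$ is hyperelliptic the Petri (cup--product) map is highly degenerate, and $M$ is \emph{not} a generic matrix: it has the catalecticant (Hankel) form forced by the pencil $g^1_2$, which is precisely the structure recorded in Corollary \ref{corollary: generic structure of exceptional divisors}. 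Concretely, I would show that the germ of $\Jac(C)$ at $q$, together with its Brill--Noether stratification, is analytically isomorphic to a product
\[
\bigl(\C^{\,g-1-2r'},0\bigr)\times\bigl(\Sym^{2r'}\C^2,\,0\bigr),
\]
where the first factor is a slice along $S_{r'}$ and the second factor, the space of binary forms of degree $2r'$, carries the stratification by secant varieties of the rational normal curve of degree $2r'$, i.e.\ by the rank of the $(r'+1)\times(r'+1)$ catalecticant. Under this isomorphism the slice of $S_{r''}$ becomes the locus of catalecticant rank $r'-r''$; in particular $S_0$ matches the secant hypersurface of Corollary \ref{corollary: generic structure of exceptional divisors} and the vertex matches $S_{r'}$ itself.

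Granting this product description, Whitney regularity along $S_{r'}$ reduces to two standard and purely local facts: conditions (a) and (b) are stable under taking a product with a smooth factor, and the secant-variety stratification of the rational normal curve (the catalecticant rank stratification of $\Sym^{2r'}\C^2$) is itself Whitney. The latter I would obtain from its homogeneity together with self-similarity: the stratification is conical, invariant under $\C^{\ast}$ and under $\mathrm{GL}_2$ acting on binary forms, and a transverse slice to any stratum (away from the vertex) is again a secant cone over a rational normal curve of smaller degree, so an induction on $r'$, checking the conical vertex separately, yields the two conditions. This is the same secant geometry that underlies Bertram's example recalled in the Introduction. Running over all $q$ and all pairs $(S_r,S_{r'})$ then gives the proposition.

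The main obstacle is the second paragraph, namely establishing the local product structure \emph{uniformly along each stratum} --- the analytic triviality (equisingularity) of the family of transverse catalecticant models as $q$ varies in $S_{r'}$. The existence of the secant model at a single point is essentially Corollary \ref{corollary: generic structure of exceptional divisors}; the substantive point is its independence of $q\in S_{r'}$, and this is exactly where hyperellipticity enters. Writing $L_q = r'h + B$ with $B$ an effective divisor of $g-1-2r'$ distinct, non-conjugate, non-Weierstrass points, one sees that $M$ is assembled from the local data of $C$ and of the pencil $h=g^1_2$ at the points of $B$; since $C$ and $h$ are analytically the same near every such point, this data varies holomorphically and can be trivialized as $B$ moves in the open subset of $\Sym^{g-1-2r'}(C)$ defined by these conditions --- which is precisely $S_{r'}$. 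Collisions of points, conjugate pairs, or Weierstrass points occur only on the boundary $W^{r'+1}_{g-1}(C)$, that is, in deeper strata, so they do not interfere with triviality along the interior of $S_{r'}$. Once this uniformity is secured, the reduction to the homogeneous secant model completes the argument.
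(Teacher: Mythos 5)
Your overall strategy---reduce to pairs of Brill--Noether strata, exhibit a local analytic product structure along each stratum with a catalecticant/secant cone as transverse model, and then verify Whitney regularity for that homogeneous model---is genuinely different from what the paper does, but it has a substantial gap at its central step. The claim that the germ of $(\Jac(C),\,\text{Brill--Noether stratification})$ at $q\in S_{r'}$ is \emph{analytically} isomorphic to $(\C^{g-1-2r'},0)\times(\Sym^{2r'}\C^2,0)$ with the secant stratification on the second factor is a strong equisingularity statement that nothing in the paper (or in your sketch) establishes. What is actually known is much weaker: the appendix of the paper shows that the \emph{tangent cone} of $W^r_{g-1}(C)$ at $L$ is the reduced catalecticant determinantal cone, and Corollary \ref{corollary: generic structure of exceptional divisors} describes fibers of exceptional divisors of the log resolution over the open strata --- it says nothing about transverse slices of the stratification inside $\Jac(C)$, so citing it as "the existence of the secant model at a single point" is a misreading. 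Passing from "the tangent cone is the catalecticant cone" to "the germ is the cone times a smooth factor" is exactly the hard part: the catalecticant determinant has non-isolated singularities, so it is not finitely determined, and higher-order terms of the local equations cannot be removed by a formal/analytic coordinate change without a genuine argument (equivalently, linearizing the matrix $M$ up to invertible row/column operations and coordinate changes is a nontrivial normal-form problem, not a consequence of "the data varies holomorphically"). Note also that Whitney regularity does not require local analytic triviality (Whitney stratifications are only locally \emph{topologically} trivial by Thom--Mather), so you are routing the proof through an unproven statement strictly stronger than the proposition itself. A secondary soft spot: the strata of the secant stratification of binary forms are not single $\mathrm{GL}_2\times\C^*$-orbits (e.g.\ $\ell_1^d+\ell_2^d$ and $\ell^{d-1}m$ lie in the same stratum but different orbits), so the "homogeneity plus self-similarity" induction for the model also needs a real proof rather than an appeal to transitivity.

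For contrast, the paper's proof avoids all of this by using only first-order information along the strata. It verifies the Kuo--Verdier condition $(W)$ (which implies Whitney $(A)$ and $(B)$): the conormal space of the stratum $W^r_{g-1}(C)^{\circ}$ at $L=\cO_C(D)\otimes rg^1_2$ is identified, via the Abel--Jacobi map and hyperellipticity, with $H^0(C,\omega_C(-D))\cong H^0\bigl(\bP^1,\cO_{\bP^1}(g-1-h_{\ast}D)\bigr)$; condition $(A)$ then follows because the limit divisor $\overline{D}=\lim D_k$ contains $D$, giving an inclusion of limiting conormal spaces, and condition $(B)$ follows from a Lipschitz estimate
\[
d\bigl(H^0(\omega_C(-\overline{D})),\,H^0(\omega_C(-D_k))\bigr)\leq A\cdot d(\overline{D},D_k),
\]
checked on $\bP^1$ by interpreting these spaces as polynomials vanishing on divisors. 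If you want to salvage your approach, you would need to either prove the analytic triviality claim (a statement of independent interest, not available off the shelf) or weaken it to a first-order statement --- at which point you essentially arrive at the paper's argument.
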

\endgroup

\subsection*{Ideas of the proof}

The main tool is Bertram's blowup construction for a
chain of maps \cite{Bertram92}. One inconvenient point in the process described above is
that the Brill-Noether varieties $W_{g-1}^r(C)$ are not smooth, which makes it hard
to keep track of conormal bundles and exceptional divisors in the various blowups.
Fortunately, on a hyperelliptic curve, each $W_{g-1}^r(C)$ has a natural resolution
of singularities by $C_{g-1-2r}$, viewed as the space of effective divisors of degree $g-1-2r$ on
$C$. Let $g_2^1$ be the line bundle corresponding to the hyperelliptic map $h
\colon C \to \bP^1$. For $0\leq r\leq n=\lfloor \frac{g-1}{2}\rfloor$, the resolution of singularities is given by the Abel-Jacobi mapping
\[
	\delta_{n-r} : C_{g-1-2r} \to W_{g-1}^r(C), \quad D \mapsto r g_2^1 \otimes \cO_C(D).
\]
Since it is easier to blow up smooth varieties, instead of $W_{g-1}^r(C)$, we work
with the chain of maps $\{\delta_i\}_{i=0}^n$. The advantage is that we do not need
to analyze the singularities of the proper transforms of $W^r_{g-1}(C)$ and how they
intersect with exceptional divisors; instead, we transform the problem into checking
that certain maps are embeddings (see Proposition \ref{prop: injective map imply smooth chain and normal crossing}), which eventually reduces to the calculation of certain
conormal bundles. The projectivized conormal bundles that show up as exceptional
divisors are closely related to secant bundles over symmetric products of $\bP^1$;
for that reason, Bertram's result about these secant bundles is another crucial
input.

\subsection*{Outline of the paper}

In \S \ref{sec: Bertram's blow up construction}, we recall Bertram's blowup
construction in detail. In \S \ref{sec: Abel-Jacobi maps}, we set up notations for the
Abel-Jacobi maps and reduce the proof of Theorem \ref{theorem: log resolution of
hyperelliptic theta divisors} to two propositions (Proposition \ref{prop: chain of
addition maps} and Proposition \ref{prop: chain of Abel Jacobi maps}), which deal
with the properties of two chains of maps between symmetric products and
Jacobians. In \S \ref{sec:Secant bundles and
maps between them}, we review the construction of secant bundles and describe Bertram's
results. In \S \ref{sec:
calculation of conormal bundles}, we study some basic properties of Abel-Jacobi maps
and the addition maps among symmetric products. In \S \ref{sec: proof of chain of abel jacobi maps}-\S
\ref{sec: proof of chain of additions maps}, we prove Proposition \ref{prop: chain of
addition maps} and Proposition \ref{prop: chain of Abel Jacobi maps}, and thereby
complete the proof of Theorem \ref{theorem: log resolution of hyperelliptic theta
divisors} for hyperelliptic curves of odd genus. The proof of Corollary \ref{corollary: generic structure of exceptional divisors} can be found at the end of \S \ref{sec: proof of chain of abel jacobi maps}. 
In \S \ref{sec: even genus
case}, we outline a proof for hyperelliptic curves of even genus, which goes along
the same line but requires a few changes in the notation. In \S \ref{sec: Whitney stratification}, we prove Proposition \ref{prop: Brill Noether are Whitney}. In \S \ref{sec: questions}, we propose some questions in the direction of this paper.

\subsection*{Notation}
\begin{itemize}
	 \item If $V$ is a vector space, $\bP(V)$ stands for the projective space of
		 one-dimensional quotients of $V$. We use the same notation for vector bundles.
    \item Let $f:X\to Y$ be a morphism between smooth projective varieties. Let
		 $Y_1\subseteq Y$ be a subvariety. We use the notation
    \[ f^{-1}(Y_1)\colonequals Y_1\times_Y X,\]
	 exclusively for the scheme-theoretic preimage, which is the fiber product of
	 the two morphisms $X \to Y$ and $Y_1 \to Y$.    
 \item Let $f:X \to Y$ be a morphism between smooth varieties. We denote by
	 \[
		 df : f^* \ctbl{Y} \to \ctbl{X}
	 \]
	 the induced morphism between cotangent bundles, and by	 
	 \begin{equation}\label{eqn: notation of conormal bundle}
		 N_f^{\ast} = \Ker \bigl( df : f^* \ctbl{Y} \to \ctbl{X} \bigr)
	 \end{equation}
	 the conormal bundle of the morphism. In the case of a closed embedding	 $X \subseteq Y$, we also use the notation $N_{X|Y}^{\ast}$. 
	 
\end{itemize} 

\subsection*{Acknowledgement}

We thank Rob Lazarsfeld for suggesting the statement of Theorem \ref{theorem: log resolution of hyperelliptic theta divisors} and we thank
Gavril Farkas, Zhuang He, Carl Lian, Andr\'{a}s L\H{o}rincz, Mirko Mauri and Botong Wang for helpful discussions. We thank Nero Budur for the discussion of the reducedness of $W^r_d(C)$. We also thank the
Max-Planck-Institute for Mathematics for its hospitality and for providing us with
excellent working conditions. Ch.S. was partially
supported by NSF grant DMS-1551677 and by a Simons Fellowship. R.Y. was partially supported by the ERC Advanced Grant SYZYGY.

\section{Bertram's blowup construction}\label{sec: Bertram's blow up construction}

In this section, we review \cite[\S 2]{Bertram92} and provide more details for the benefit of readers. The main result is Proposition \ref{prop: injective map imply smooth chain and normal crossing}, which is an inductive criterion for constructing a log resolution out of a sequence of morphisms.

\subsection{Chains and maps of chains}
Let $X$ be a projective variety, not necessarily
smooth.
\begin{definition}\label{definition: Bertrams blowup construction}
A \emph{proper chain} is a sequence of morphisms $\{ f_i: X_i \to X\}_{i=0}^{n}$ from
projective varieties $X_i$ with the property that for each $0 \leq i<j \leq n$, there
exists a commutative diagram
\begin{equation*}\label{eqn: the diagram with Xij}
\begin{tikzcd}
X_{i,j} \arrow[r,twoheadrightarrow,"g_{i,j}"] \arrow[d,"h_{i,j}"] & X_i \arrow[d,"f_i"] \\
X_j \arrow[r,"f_j"] & X
\end{tikzcd}\end{equation*}
so that $g_{i,j}$ is surjective and there is a proper inclusion $f_i(X_i)\subsetneq f_j(X_j)$. 
\end{definition}

\begin{remark}
It is sufficient to take $X_{i,j}$ to be the fiber product of $f_i$ and $f_j$. However, in practice $X_{i,j}$ is usually not the fiber product and will become a fiber product after sufficiently many blow-ups, see Remark \ref{remark: why Xij is not a fiber product} for one example. \end{remark}

\begin{definition}\label{definition: construction of blow ups}
Let $\{ f_j: X_j \to X\}$ be a proper chain. Assume $f_0$ is an \emph{embedding}, we identify $X_0$ with its image and define:
\begin{align*}
	\bl_1(X) &\colonequals \text{the blowup of $X$ along $X_0$}, \\
	\bl_1(X_j) &\colonequals \text{the blowup of $X_j$ along $f_j^{-1}(X_0)$}, \\
	\bl_1(f_j) &\colonequals \text{the unique lift of $f_j$ to a map
	$\bl_1(X_j) \to \bl_1(X)$}.
\end{align*}
Assume for some $i\geq 1$, $\bl_i(X)$, $\bl_i(X_j)$ and $\bl_i(f_j)$ have already been defined for all $j \geq
i$, and that the map
\[\bl_i(f_i): \bl_i(X_i) \to \bl_i(X)\]
is an \emph{embedding}. Then we can identify
$\bl_i(X_i)$ with its image and set
\begin{align*}
	\bl_{i+1}(X) &\colonequals \text{the blowup of $\bl_i(X)$ along $\bl_i(X_i)$}, \\
	\bl_{i+1}(X_j) &\colonequals \text{the blowup of $\bl_i(X_j)$ along
	$\bl_i(f_j)^{-1}(\bl_i(X_i))$}, \\
	\bl_{i+1}(f_j) &\colonequals \text{the unique lift of $\bl_i(f_j)$ to a map
		$\bl_{i+1}(X_j) \to \bl_{i+1}(X)$}.  
\end{align*}
\end{definition}

\begin{notation}
To have a uniform notation, we set
\[ \bl_0(f_i)\colonequals f_i, \quad \bl_0(X_i) \colonequals X_i, \quad \bl_0(X)
\colonequals X.\]
\end{notation}

\begin{definition}\label{definition: chain of centers}
Let $\{f_i: X_i \to X\}_{i=0}^{n}$ be a proper chain. If $\bl_{n+1}(X)$ is defined in Definition \ref{definition: construction of blow ups},
we say that $\{f_i\}$ is a \emph{chain of centers}. Concretely, this amounts to the
(recursive) condition that the $n+1$ morphisms $f_0$, $\bl_1(f_1)$, \dots,
$\bl_n(f_n)$ are closed embeddings. 
If $X$ and $\{\bl_i(X_i)\}_{i=0}^n$ are all smooth, then we say that $\{f_i \}$ is a
\emph{chain of smooth centers}. \end{definition}

We formulate an additional definition, which is not in \cite{Bertram92} and ensures that the exceptional divisors in
the final blowup $\bl_{n+1}(X)$ form a simple normal crossing divisor. 
\begin{definition}\label{definition: NCD conditions}
Let $\{ f_i: X_i \to X\}_{i=0}^n$ be a chain of smooth centers. We define divisors 
\[ E_{i,j}\subseteq \bl_j(X), \quad \textrm{for $j\geq i+1$},\]
as follows. First $E_{i, i+1}\subseteq \bl_{i+1}(X)$ is the exceptional divisor for the blowing-up
of $\bl_i(X)$ along $\bl_i(X_i)$. For each $j \geq i+1$, let $E_{i,j}\subseteq \bl_{j}(X)$
be the inverse image of $E_{i,i+1}$, with the following Cartesian diagrams:
\begin{equation*}
\begin{tikzcd}
E_{i,j} \arrow[d,hook] \arrow[r] & E_{i,i+1} \arrow[d,hook] \arrow[r] & \bl_i(X_i) \arrow[d,hook]\\
\bl_{j}(X) \arrow[r] & \bl_{i+1}(X) \arrow[r] & \bl_i(X) 
\end{tikzcd} 
\end{equation*}
Moreover, set
\[ E_{i}\colonequals E_{i,n+1}\subseteq \bl_{n+1}(X),\]
and we say that $\{E_{i}\}_{i=0}^n$ is \emph{the set of exceptional divisors} of the chain $\{f_i\}_{i=0}^n$.

A chain of smooth centers $\{ f_i \}_{i=0}^n$ is called an \emph{NCD chain} if, for each $j\leq n$, the intersection
\[ \bl_j(X_j) \cap E_{i,j} \subseteq \bl_{j}(X)\]
is transverse for all $i<j$ and the divisor $E_{0,j}+\cdots +E_{j-1,j} \subseteq \bl_j(X)$
has simple normal crossings. 
\end{definition}

\begin{remark}\label{remark: complement of union of divisors} 
Let $\{f_i:X_i\to X\}_{i=0}^{n}$ be a chain of
smooth centers. Then there is a natural embedding of $X-f_{n}(X_n)$ into
$\bl_{n+1}(X)$ such that
\[ \bl_{n+1}(X)-\bigcup_{0\leq i\leq n}E_{i}=X-f_n(X_{n}).\]
\end{remark}

For induction purposes, the following notation becomes convenient.
\begin{notation}\label{notation: chain product with a constant factor}
Let $S$ be a smooth variety and let $\{f_i:X_i \to X\}_{i=0}^n$ be a proper chain. It
induces a new proper chain $\{f_i\times \id: X_i \times S \to X\times S \}_{i=0}^n$, i.e.
the collection of maps that are $f_i$ on the first factor and the identity on $S$.
\end{notation}
\begin{lemma}\label{lemma: chains with a constant factor}
Let $\{f_i:X_i\to X\}_{i=0}^n$ be an NCD chain. For each $0 \leq i < j \leq n+1$, let
$E_{i,j}\subseteq \bl_j(X)$ and $F_{i,j} \subseteq \bl_j(X\times S)$ be the
exceptional divisors of the chains $\{f_i\}_{i=0}^n$ and $\{f_i\times
\id\}_{i=0}^n$. Then there are natural isomorphisms
\[ \bl_i(X\times S)=\bl_i(X)\times S, \quad \bl_i(X_j\times S)=\bl_i(X_j)\times S,  \]
\[ \bl_i(f_j\times \id)=\bl_i(f_j)\times \id, \quad F_{i,j}=E_{i,j}\times S.\]
Moreover, $\{f_i\times \id\}_{i=0}^n$ is again an NCD chain.
\end{lemma}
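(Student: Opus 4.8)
The plan is to reduce everything to the single fact that \emph{blowing up commutes with flat base change}: for a flat morphism $g\colon Y'\to Y$ and a closed subscheme $Z\subseteq Y$ there is a canonical isomorphism $\bl_{g^{-1}(Z)}(Y')\cong Y'\times_Y\bl_Z(Y)$ under which the exceptional divisor on the left corresponds to the $g$-preimage of the exceptional divisor on the right. I would apply this with $g=\pr\colon Y\times S\to Y$, which is flat because $S$ is a variety over the base field; here $Y$ ranges over the intermediate spaces $\bl_i(X)$ and $\bl_i(X_j)$, \emph{none of which need be smooth}, so it is important that flat base change for blowups requires no smoothness hypothesis. This yields, for every closed subscheme $Z\subseteq Y$, a canonical isomorphism $\bl_{Z\times S}(Y\times S)\cong\bl_Z(Y)\times S$ compatible with exceptional divisors, and this is the engine for all four asserted identities.

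I would then prove $\bl_i(X\times S)=\bl_i(X)\times S$, $\bl_i(X_j\times S)=\bl_i(X_j)\times S$ and $\bl_i(f_j\times\id)=\bl_i(f_j)\times\id$ by induction on $i$, following the recursion in Definition \ref{definition: construction of blow ups}; the case $i=0$ is tautological. For the step, by the inductive hypothesis the center of the $(i+1)$-st blowup of $X\times S$ is the image of $\bl_i(f_i\times\id)=\bl_i(f_i)\times\id$, namely $\bl_i(X_i)\times S\subseteq\bl_i(X)\times S$ (a genuine embedding, since $\bl_i(f_i)$ is), and this is exactly the $\pr$-preimage of the center $\bl_i(X_i)\subseteq\bl_i(X)$; the base-change fact then gives $\bl_{i+1}(X\times S)=\bl_{i+1}(X)\times S$ and identifies $F_{i,i+1}$ with $E_{i,i+1}\times S$. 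For the $X_j$ I would use that scheme-theoretic preimages commute with the constant factor, i.e. $(\bl_i(f_j)\times\id)^{-1}\bigl(\bl_i(X_i)\times S\bigr)=\bl_i(f_j)^{-1}(\bl_i(X_i))\times S$ (a fiber product over a map that is $\id$ on $S$), so blowing up along this center again produces $\bl_{i+1}(X_j)\times S$. Then $\bl_{i+1}(f_j\times\id)=\bl_{i+1}(f_j)\times\id$ follows from uniqueness of the lift in the universal property of blowing up, since $\bl_{i+1}(f_j)\times\id$ is visibly a valid lift. Finally $F_{i,j}=E_{i,j}\times S$ for all $j$ propagates from $F_{i,i+1}=E_{i,i+1}\times S$ through the Cartesian diagrams of Definition \ref{definition: NCD conditions}, because taking scheme-theoretic inverse images along the product maps $\bl_j(X)\times S\to\bl_{i+1}(X)\times S$ again commutes with the factor $S$.

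It remains to verify the NCD conditions for $\{f_i\times\id\}$. First it is a chain of smooth centers: the spaces $X\times S$, $X_0\times S$, $\bl_1(X_1)\times S,\dots,\bl_n(X_n)\times S$ are smooth as products of smooth varieties with the smooth $S$, and each $\bl_i(f_i\times\id)=\bl_i(f_i)\times\id$ is a closed embedding. For transversality and simple normal crossings I would rewrite, for $i<j$,
\[
\bl_j(X_j\times S)\cap F_{i,j}=\bigl(\bl_j(X_j)\cap E_{i,j}\bigr)\times S\subseteq \bl_j(X)\times S,
\qquad
\sum_{i<j}F_{i,j}=\Bigl(\sum_{i<j}E_{i,j}\Bigr)\times S.
\]
Transversality is preserved under $-\times S$ since at each point all tangent spaces acquire the common summand $T_S$, so transversality in $\bl_j(X)$ forces it in $\bl_j(X)\times S$; and a product of a simple normal crossing divisor with the smooth $S$ is again simple normal crossing. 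Hence the NCD conditions for $\{f_i\}$ transfer verbatim.

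The argument is essentially bookkeeping, and the only genuine inputs are the flat base-change property of blowups (used on possibly singular spaces) and the stability of transversality and the simple-normal-crossing property under products with a smooth variety. I expect the point requiring the most care to be the compatibility of \emph{scheme-theoretic} preimages with the constant factor $S$ throughout the induction, since this is precisely what keeps the successive blowup centers in product form; once that is set up cleanly, the remaining verifications are formal.
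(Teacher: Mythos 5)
Your proof is correct and takes essentially the same route as the paper's own (one-sentence) proof, which simply cites the facts that blowups commute with taking the Cartesian product with the smooth variety $S$ and that transversality is preserved under such products. Your write-up merely supplies the details behind those facts --- grounding the commutation in flat base change for blowups (correctly noting no smoothness is needed, which matters for the possibly singular $\bl_i(X_j)$ with $j>i$) and running the induction through the recursion of Definition \ref{definition: construction of blow ups} --- so there is no substantive difference in approach.
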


\begin{proof}
This follows from the fact that blowup maps commute with taking Cartesian product
with the smooth variety $S$, and that transversality is also preserved under product with $S$.
\end{proof}

To prove that a chain of centers is an NCD chain, it is useful to have the following notion.

\begin{definition}\label{definition: injective chain}
Suppose that $\{ f_i : X_i \to X\}_{i=0}^{n}$ and $\{ g_i: Y_i \to Y\}_{i=0}^{n}$ are
two chains of centers. We say that a map $\phi: X \to Y$ is a \emph{map of chains of
centers} if it satisfies the following conditions.
\begin{itemize}
\item First $\phi^{-1}(Y_0)=X_0$, so $\bl_1(X) \to \bl_1(Y)$ is defined, and
\item inductively, assume that for some $0<i\leq n$ the map $\bl_i(\phi):\bl_i(Y)\to \bl_i(X)$ exists, then one has $\bl_i(\phi)^{-1}(\bl_i(Y_i))=\bl_i(X_i)$. Consequently, one can define
\[ \bl_{i+1}(\phi):\bl_{i+1}(X) \to \bl_{i+1}(Y) \]
to be the unique lift of $\bl_i(\phi)$.\end{itemize}
We say that the map $\phi$ is an \emph{injective} map of chains of centers if in addition, $\bl_{n+1}(\phi)$ is injective.
\end{definition}

The following lemma gives an inductive criterion for a chain to be NCD.
\begin{lemma}\label{lemma: injective map imply normal crossing}
	Let $\{\phi_j: X_j \to X\}_{j=0}^n$ be a proper chain, with diagrams for $0\leq i<j \leq n$:
\[ \begin{tikzcd}
X_{i,j} \arrow[d,"f_{i,j}"]\arrow[r,twoheadrightarrow] & X_i\arrow[d,"\phi_i"]\\
X_j \arrow[r,"\phi_j"] & X
\end{tikzcd}\]
Suppose that $X$, $X_i$, $X_{i,j}$ are all smooth projective varieties and assume the following conditions are satisfied:
\begin{enumerate}
\item For each $j\leq n$, $\{f_{i,j}:X_{i,j} \to
X_{j}\}_{i=0}^{j-1}$ is an NCD chain.
\item $\{\phi_j\}_{j=0}^n$ is a chain of smooth centers.
\item Each $\phi_j$ is a map of chains of centers.
\end{enumerate}
Then the chain $\{\phi_j\}_{j=0}^n$ is an NCD chain.
\end{lemma}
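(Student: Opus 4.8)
The plan is to unwind Definition~\ref{definition: NCD conditions}. Since hypothesis~(2) already gives that $\{\phi_j\}$ is a chain of smooth centers, it remains to check, for every $j \le n$, that (a)~$\bl_j(X_j) \cap E_{i,j}$ is a transverse intersection in $\bl_j(X)$ for all $i<j$, and (b)~$E_{0,j}+\dots+E_{j-1,j}$ is a simple normal crossing divisor in $\bl_j(X)$. The engine of the whole argument is one compatibility statement, which I would establish first. Write $D_{i,j} \subseteq \bl_j(X_j)$ for the exceptional divisors (in the sense of Definition~\ref{definition: NCD conditions}) of the NCD chain $\{f_{i,j}\}_{i=0}^{j-1}$ furnished by hypothesis~(1). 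The claim is that the embedding $\bl_j(\phi_j)\colon \bl_j(X_j) \hookrightarrow \bl_j(X)$ satisfies
\[
  \bl_j(\phi_j)^{-1}(E_{i,j}) = D_{i,j}, \qquad i < j,
\]
as closed subschemes.

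To prove this identity, I would use that $\phi_j$ is a map of chains of centers (hypothesis~(3), Definition~\ref{definition: injective chain}): the relations $\bl_k(\phi_j)^{-1}(\bl_k(X_k)) = \bl_k(X_{k,j})$ show that the tower of blowups of $X_j$ built inside the $\phi$-chain is literally the tower built from its own chain $\{f_{i,j}\}$. Since blowing up a center and its inverse image sends exceptional divisor to exceptional divisor, one gets $\bl_{i+1}(\phi_j)^{-1}(E_{i,i+1}) = D_{i,i+1}$; taking scheme-theoretic inverse images under the remaining blowdowns $\bl_j(X) \to \bl_{i+1}(X)$, which commute with the maps $\bl_\bullet(\phi_j)$, yields the displayed equality. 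Condition~(a) is now immediate: $\bl_j(\phi_j)$ is an embedding by hypothesis~(2), and its scheme-theoretic preimage of $E_{i,j}$ is $D_{i,j}$, which is smooth of codimension one, being a component of the SNC divisor $\sum_i D_{i,j}$ on $\bl_j(X_j)$. An embedding whose preimage of a smooth divisor is again a smooth (reduced) divisor of codimension one is transverse to it, so (a) holds for every $j$, using only (1) and (2).

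For (b) I would induct on $j$, the case $j=1$ being a single smooth exceptional divisor. For the inductive step, assume $\sum_{i<j-1} E_{i,j-1}$ is SNC in $\bl_{j-1}(X)$; the $j$-th blowup has smooth center $W = \bl_{j-1}(X_{j-1})$. The decisive point is that $W$ lies in \emph{normal crossing position} with respect to this divisor, i.e.\ it meets every stratum $\bigcap_{i\in S} E_{i,j-1}$ transversally. This follows from the compatibility above: the scheme-theoretic preimage of such a stratum under $\bl_{j-1}(\phi_{j-1})$ is $\bigcap_{i\in S} D_{i,j-1}$, a stratum of the SNC divisor $\sum_i D_{i,j-1}$ on $\bl_{j-1}(X_{j-1})$, hence smooth of the expected codimension $\lvert S\rvert$ --- which is exactly transversality to the deepest stratum through each point, and hence to all strata. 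Moreover $W$ is contained in no $E_{i,j-1}$ (these map into $\phi_i(X_i) \subsetneq \phi_{j-1}(X_{j-1})$), so total transforms equal strict transforms: $E_{i,j}$ is the strict transform of $E_{i,j-1}$ for $i<j-1$, while $E_{j-1,j}$ is the new exceptional divisor. The classical fact that blowing up a smooth center in normal crossing position with an SNC divisor produces an SNC divisor (strict transforms together with the new exceptional divisor) then gives (b) at stage $j$.

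The main obstacle, and the only place where hypotheses (1), (2), (3) genuinely combine, is the derivation of normal crossing position: the embedding $\bl_{j-1}(\phi_{j-1})$ transports the stratification of $\sum_i E_{i,j-1}$ downstairs to the exceptional divisors of $\{f_{i,j-1}\}$ on the smooth, lower-dimensional variety $\bl_{j-1}(X_{j-1})$, where the NCD hypothesis makes all strata automatically smooth of the right codimension. Once this is in place, the preservation of SNC under blowup is standard (cf.\ Hironaka, or Koll\'ar's resolution lectures). The one piece of bookkeeping that must be handled with care is the scheme-theoretic identity $\bl_j(\phi_j)^{-1}(E_{i,j}) = D_{i,j}$ together with the observation that the center meets each accumulated exceptional divisor without being contained in it, so that no spurious exceptional components enter the inverse images $E_{i,j}$.
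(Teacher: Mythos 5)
Your proposal is correct, but it takes a genuinely different route from the paper. The paper's proof is essentially a reduction to \cite[Lemma 2.1]{Bertram92}: hypotheses (2) and (3) together make each $\phi_j$ an \emph{injective} map of chains of centers, hypothesis (1) together with Remark \ref{remark: complement of union of divisors} shows that the complement of $X_j-f_{j-1,j}(X_{j-1,j})$ inside $\bl_j(X_j)$ is an SNC divisor with $j$ components, and then Bertram's lemma --- in fact statements extracted from its \emph{proof} --- delivers both the transversality of $\bl_j(X_j)$ with the exceptional divisors and the normal crossings of the exceptional divisors in $\bl_{j+1}(X)$. You never invoke Bertram; instead you prove these conclusions from scratch: first the scheme-theoretic identity $\bl_j(\phi_j)^{-1}(E_{i,j})=D_{i,j}$, which follows from hypothesis (3) and the functoriality of blowups (the paper's Lemma \ref{lemma: functoriality of blow ups}, applied at each stage and pushed through the later blowdowns); then transversality, because an embedding whose scheme-theoretic preimage of a smooth divisor is again a smooth reduced divisor of codimension one is automatically transverse to it (a tangent-space count); then SNC by induction on $j$, using that the center $\bl_{j-1}(X_{j-1})$ meets every stratum of $\sum_{i<j-1}E_{i,j-1}$ transversally (its preimage being a stratum of the SNC divisor $\sum_i D_{i,j-1}$) together with the classical fact that blowing up a smooth center in normal crossing position preserves SNC. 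What the paper's route buys is brevity, at the price of depending on the interior of Bertram's argument rather than on a quotable statement; what your route buys is a self-contained proof that makes the mechanism explicit and uses only the paper's own Lemma \ref{lemma: functoriality of blow ups} plus standard resolution-of-singularities facts. Two small points would need tightening in a final write-up: the implication ``transverse to every closed stratum through a point $\Rightarrow$ normal crossing position'' is true but not immediate (it requires a linear-algebra argument producing an adapted basis, followed by a graph-coordinate change that fixes the divisor components), and your parenthetical that condition (a) uses ``only (1) and (2)'' is slightly off, since the scheme-theoretic compatibility driving it already consumes hypothesis (3).
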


\begin{proof}
It follows from \cite[Lemma 2.1]{Bertram92} and its proof.\end{proof}

In the rest of this section, we discuss how one can replace the conditions (2) and (3) in Lemma
\ref{lemma: injective map imply normal crossing} by certain conditions on exceptional divisors and their complements, in the presence of condition (1); more precisely the transversality condition from $\{f_{i,j}\}$ being NCD chains. By extracting these conditions from the proof of \cite[Proposition 2.2]{Bertram92}, we hope it will make our proof of Theorem \ref{theorem: log resolution of hyperelliptic theta divisors} more transparent.

\subsection{Criteria for maps of chains of centers}
For a map $\phi:X\to Y$ between chains $\{f_i:X_i\to X\}$ and $\{g_i:Y_i \to Y\}$, to show it is a map of chains of centers, one needs to check $\bl_i(\phi)^{-1}(\bl_i(Y_i))=\bl_i(X_i)$ for each $i$. In the presence of certain transversality conditions, we can apply the lemma below. 

\begin{notation}\label{notation: open parts of divisors}
For a sequence of divisors $\{E_i\}_{i=0}^{n}$, denote by
\[ E^{\circ}_i\colonequals E_i-(E_0 \cup E_1 \cup \dotsb \cup E_{i-1}). \]
Note that we are removing \emph{only} the intersections with the previous divisors.
\end{notation}
\begin{lemma}\label{lemma: criterion for checking scheme-theoretic pull back}
Let $\phi:X\to Y$ be a morphism between smooth projective varieties. Let $\{
E_i\}_{i=0}^{j-1}$ and $\{ F_i\}_{i=0}^{j-1}$ be two sequences of smooth divisors in
$X$ and $Y$ such that 
\[\phi^{-1}(F_i)=E_i, \quad \forall 0\leq i \leq j-1.\]
Let $X_1\subseteq X$ and $Y_1\subseteq Y$ be two smooth subvarieties. Assume that for
each $0\leq i \leq j-1$,
\begin{itemize}
    \item [(a)] the intersections $X_1\cap E_i$ and $Y_1 \cap F_i$ are transverse,
    \item [(b)] $\phi^{-1}(Y_1\cap F^{\circ}_i)=X_1\cap E^{\circ}_i$,
    \item [(c)] $\phi^{-1}(Y_1-\cup_i F_i)=X_1-\cup_i E_i$. 
\end{itemize}
Then $\phi^{-1}(Y_1)=X_1$.
\end{lemma}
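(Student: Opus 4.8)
The plan is to compare the ideal sheaves of the two closed subschemes $Z \colonequals \phi^{-1}(Y_1)$ and $X_1$ of $X$, proving $\cI_Z = \cI_{X_1}$. First I would establish the set-theoretic equality $\lvert Z\rvert = \lvert X_1\rvert$. Since $\phi^{-1}(F_i)=E_i$, on the level of sets $\phi^{-1}(F_i^{\circ})=E_i^{\circ}$ (see Notation \ref{notation: open parts of divisors}) and $\phi^{-1}\bigl(Y-\cup_i F_i\bigr)=X-\cup_i E_i$. The locally closed pieces $Y_1\cap F_i^{\circ}$ together with $Y_1-\cup_i F_i$ partition $Y_1$, and taking preimages partitions $\lvert Z\rvert$; conditions (b) and (c), read on the level of sets, identify each piece of $\lvert Z\rvert$ with the corresponding piece $X_1\cap E_i^{\circ}$ or $X_1-\cup_i E_i$ of $X_1$. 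Hence $\lvert Z\rvert=\lvert X_1\rvert$. Because $X_1$ is smooth, it is reduced and therefore coincides with the reduced induced structure on this common support, so $\cI_{X_1}=\sqrt{\cI_Z}\supseteq \cI_Z$; equivalently $X_1\subseteq Z$ as schemes. It remains to prove the reverse inclusion $\cI_{X_1}\subseteq \cI_Z$, which is a local statement at each point $x\in\lvert X_1\rvert$.

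For the local statement I would do a case analysis. If $x\notin\cup_i E_i$, then near $x$ we are in the open set $X-\cup_i E_i$, where condition (c) gives $Z=X_1$ directly. If $x\in\cup_i E_i$, let $i$ be the smallest index with $x\in E_i$, so that $x\in E_i^{\circ}$ and $x\notin E_{i'}$ for $i'<i$. Choose a neighborhood $W$ of $x$ meeting none of the $E_{i'}$ with $i'<i$; on $W$ the divisor $E_i$ is cut out by a single reduced local equation $t=\phi^{\ast}(\text{local equation of }F_i)$. Using that scheme-theoretic preimage commutes with scheme-theoretic intersection, condition (b) for the index $i$ yields, near $x$, the equality of subschemes $Z\cap E_i = X_1\cap E_i$, i.e. of ideals in $R\colonequals\cO_{X,x}$:
\[
	\cI_Z + (t) = \cI_{X_1} + (t).
\]
Meanwhile, transversality (a) of $X_1\cap E_i$ forces $X_1\not\subseteq E_i$ near $x$, so the image of $t$ in the regular local domain $\cO_{X_1,x}=R/\cI_{X_1}$ is nonzero, hence a nonzerodivisor.

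The heart of the argument — and the step I expect to be the main obstacle, since it is exactly the passage from the set-theoretic matching to genuine scheme-theoretic equality — is then the following local lemma: if $\cI_Z\subseteq\cI_{X_1}$, if $\cI_Z+(t)=\cI_{X_1}+(t)$, and if $t$ is a nonzerodivisor on $R/\cI_{X_1}$, then $\cI_{X_1}\subseteq\cI_Z$. To prove it, take $f\in\cI_{X_1}\subseteq\cI_Z+(t)$ and write $f=g_0+t a_0$ with $g_0\in\cI_Z$; then $t a_0=f-g_0\in\cI_{X_1}$ (using $\cI_Z\subseteq\cI_{X_1}$), so the nonzerodivisor property gives $a_0\in\cI_{X_1}$. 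Iterating this on $a_0,a_1,\dotsc$ shows $f\in\cI_Z+(t^{k})$ for every $k$, whence $f\in\bigcap_{k}\bigl(\cI_Z+(t^{k})\bigr)=\cI_Z$ by Krull's intersection theorem applied in the Noetherian local ring $R/\cI_Z$. This gives $\cI_{X_1}\subseteq\cI_Z$ at $x$, and combined with the first paragraph we conclude $\cI_Z=\cI_{X_1}$, i.e. $\phi^{-1}(Y_1)=X_1$. I would emphasize that transversality is indispensable here: without it one could have $X_1\subseteq E_i$ (the extreme failure of transversality), and then a nonreduced thickening such as $\{t^{2}=0\}$ would satisfy all the support-level conditions while differing from $X_1$, so it is precisely the nonzerodivisor property extracted from (a) that rules out embedded or thickened behavior along the exceptional divisors.
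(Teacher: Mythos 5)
Your proof is correct, and it takes a genuinely different route from the paper's. The paper argues sheaf-theoretically: granting the set-theoretic equality, it reduces the scheme-theoretic statement via Nakayama's lemma to the surjectivity of the conormal map $d\phi\colon \phi^{\ast}N^{\ast}_{Y_1|Y}\to N^{\ast}_{X_1|X}$, and verifies this stratum by stratum: over $X_1-\cup_i E_i$ it follows from (c), while over each $X_1\cap E^{\circ}_i$ the transversality in (a) identifies $N^{\ast}_{X_1|X}\big\vert_{X_1\cap E^{\circ}_i}$ with $N^{\ast}_{X_1\cap E^{\circ}_i|E^{\circ}_i}$ (and likewise on the $Y$ side), so that (b) applies. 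You instead work directly with ideal sheaves: the set-theoretic equality together with reducedness of the smooth $X_1$ gives $\cI_Z\subseteq \cI_{X_1}$ for $Z=\phi^{-1}(Y_1)$, and the reverse inclusion is proved stalkwise by repeatedly dividing by a local equation $t$ of $E_i$ and invoking Krull's intersection theorem in $R/\cI_Z$, with transversality entering only through its consequence that $t$ is a nonzerodivisor on $\cO_{X_1,x}$. Your two key steps both check out: scheme-theoretic preimage does commute with scheme-theoretic intersection (both are fiber products), so (b) really yields $\cI_Z+(t)=\cI_{X_1}+(t)$ near a point of $E_i^{\circ}$, and the successive-division lemma is sound since $t$ lies in the maximal ideal. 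As for what each approach buys: the paper's argument runs on the same conormal-bundle formalism used throughout the rest of the paper (compare Lemma \ref{lemma: embedding for multiple blow ups} and the computations in \S\ref{sec: calculation of conormal bundles}), and treats all strata uniformly; yours is more elementary and in fact needs strictly weaker hypotheses, since smoothness of the divisors $E_i$, $F_i$ and of $Y_1$, and the transversality of $Y_1\cap F_i$, are never used, only that no local irreducible component of $X_1$ lies inside any $E_i$ (together with smoothness of $X_1$, which makes $\cO_{X_1,x}$ a domain). Your closing remark about the role of (a) is also apt: if $X_1\subseteq E_i$, the pair of ideals $(t^2)\subseteq(t)$ satisfies every displayed relation except the nonzerodivisor condition, so some such hypothesis is indispensable for the argument.
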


\begin{proof}
From the assumption, we know that the set-theoretic preimage of $Y_1$ under $\phi$ is $X_1$. 
In order to show the scheme-theoretical statement, we need to know that
$\phi^{\ast} \cI_{Y_1} \to \cI_{X_1}$ is surjective. Recall that for a closed embedding of smooth varieties $A\xhookrightarrow{i}  B$, the conormal bundle is denoted by
\[ N^{\ast}_{A|B}= \mathrm{Ker}\{ di: i^{\ast}T^{\ast}_B \to T^{\ast}_A\}.\] Since $N_{X_1|X}^{\ast} =
\cI_{X_1}/\cI_{X_1}^2$ and $N_{Y_1|Y}^{\ast} = \cI_{Y_1}/\cI_{Y_1}^2$, by Nakayama's
lemma, this is equivalent to the surjectivity of
\[ d\phi: \phi^{\ast}N^{\ast}_{Y_1|Y} \to N^{\ast}_{X_1|X}.\]
This can be checked over $X_1-\cup_i E_i=X_1-\cup_iE^{\circ}_i$ and $X_1\cap E^{\circ}_i$ separately. Condition (c) implies that $d\phi$ is surjective over $X_1-\cup_i E_i$. On the other hand, using condition (a), we have the following commutative diagram
\[ \begin{tikzcd}[column sep=large] 
\phi^{\ast}N^{\ast}_{Y_1|Y} \big\vert_{Y_1\cap F^{\circ}_i} \arrow[r,"d\phi
\vert_{X_1\cap E^{\circ}_i}"] \arrow[d,"\cong"] & N^{\ast}_{X_1|X} \big\vert_{X_1\cap E^{\circ}_i}\arrow[d,"\cong"] \\
\phi^{\ast}N^{\ast}_{Y_1\cap F^{\circ}_i | F^{\circ}_i} \arrow[r,"d(\phi
\vert_{E^{\circ}_i})"] & N^{\ast}_{X_1\cap E^{\circ}_i|E^{\circ}_i}.
\end{tikzcd}\]
The bottom map is induced by $\phi \vert_{E^{\circ}_i}: E^{\circ}_i \to
F^{\circ}_i$. The vertical maps are isomorphisms because of the transversality
condition. Therefore condition (b) implies that $d\phi$ is also
surjective over $X_1\cap E^{\circ}_i$ for each $i$.
\end{proof}

\subsection{Criterion for a chain of centers}
Let $\{h_i:Z_i\to Z\}$ be a proper chain where $Z_i$ and $Z$ are all smooth. Assume for some $i$, the map 
\begin{equation}\label{eqn: blow up map from Zi to Z}
\bl_i(h_i):\bl_i(Z_i) \to \bl_i(Z)\end{equation}
exists. In this section, we explain how to check it is an embedding under suitable transversality conditions.   
\begin{lemma}\label{lemma: pull back of conormal bundles}
Let $f: X \to Y$ be a morphism between two smooth projective varieties. Let
$F\subseteq Y$ be a smooth divisor. If $E\colonequals f^{-1}(F)$ is also a smooth
divisor, then
\[ f^{\ast}N^{\ast}_{F|Y}=N^{\ast}_{E|X},\]
where $N^{\ast}_{E|X}$ is the conormal bundle of $E$ in $X$.
\end{lemma}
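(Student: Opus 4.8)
The plan is to prove the equality of conormal bundles $f^*N^*_{F|Y} = N^*_{E|X}$ by reducing to a purely local statement about regular functions and then checking the relevant map of coherent sheaves is an isomorphism. Since $F$ is a smooth divisor in $Y$, the conormal sheaf $N^*_{F|Y} = \cI_F/\cI_F^2$ is a line bundle on $F$, and because $E = f^{-1}(F)$ is (by hypothesis) also a smooth divisor, $N^*_{E|X} = \cI_E/\cI_E^2$ is a line bundle on $E$. The differential $df$ induces a natural map $f^*N^*_{F|Y} \to N^*_{E|X}$, coming from pulling back the Kähler differential of a local equation of $F$; concretely, if $t$ is a local generator of $\cI_F$ near a point of $F$, then $f^\sharp(t)$ generates $\cI_E$ near the corresponding point of $E$ (this is exactly the content of the scheme-theoretic equality $E = f^{-1}(F)$), and the map sends the class of $f^*dt$ to the class of $d(f^\sharp t)$.

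The key steps, in order, would be the following. First I would note that both sides are line bundles on the smooth divisor $E$, so it suffices to prove the natural map between them is surjective (a surjection of line bundles of the same rank on a reduced scheme is automatically an isomorphism), or equivalently to exhibit a local generator on each side and show the map carries one to a generator of the other. Second, I would work locally: choose a point $e \in E$, let $t \in \cO_{Y,f(e)}$ be a local equation for $F$, so that $\cI_{F,f(e)} = (t)$. The scheme-theoretic preimage condition gives $\cI_{E} = f^*\cI_F \cdot \cO_X$, i.e. $\cI_{E,e} = (f^\sharp t)$ as an ideal in $\cO_{X,e}$. Third, the map $f^*N^*_{F|Y} \to N^*_{E|X}$ sends the generating section $\overline{t}$ (the class of $t$ mod $\cI_F^2$, pulled back) to the class of $f^\sharp t$ mod $\cI_E^2$, which by the previous step is precisely a generator of $N^*_{E|X}$. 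Hence the map is an isomorphism.

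The main obstacle, and the only place where the hypotheses are genuinely used, is verifying that the scheme-theoretic equality $E = f^{-1}(F)$ forces $f^\sharp t$ to generate $\cI_E$ rather than merely cut out $E$ set-theoretically with some multiplicity. This is exactly where the assumption that $E$ is a \emph{reduced} smooth divisor (and not just that $f^{-1}(F)$ has support equal to $E$) is doing the work: the scheme-theoretic preimage ideal is by definition $f^{-1}(F) = F \times_Y X$, whose ideal sheaf is the image of $f^*\cI_F \to \cO_X$, and the hypothesis that this fiber product is the smooth divisor $E$ says precisely that this image is $\cI_E$. I would be careful to phrase this so that the smoothness of $E$ guarantees $\cI_E$ is locally principal, matching the locally principal ideal $(f^\sharp t)$ generated by the single function $f^\sharp t$; this rules out pathologies where $f^*\cI_F$ could generate a non-reduced or higher-codimension ideal. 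Once this local identification $\cI_{E,e} = (f^\sharp t)$ is in place, the isomorphism of conormal bundles is immediate, so the entire content of the lemma is packaged into correctly interpreting the scheme-theoretic preimage.
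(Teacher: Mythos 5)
Your proof is correct and is essentially the same argument as the paper's: both rest on the single fact that the scheme-theoretic equality $E=f^{-1}(F)$ means $\cI_E$ is the image of $f^{\ast}\cI_F$, i.e.\ locally generated by $f^{\sharp}t$ for a local equation $t$ of $F$, equivalently $f^{\ast}\cO_Y(-F)\cong \cO_X(-E)$. The paper phrases this globally, identifying $N^{\ast}_{F|Y}=\cO_Y(-F)\vert_F$ and $N^{\ast}_{E|X}=\cO_X(-E)\vert_E$ via the conormal sequence and pulling back, while you unpack the identical identification in local generators; the mathematical content coincides.
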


\begin{proof}
Since $F$ is a smooth divisor, we have $N^{\ast}_{F|Y}=\cO_{Y}(-F) \vert_F$ by the conormal
sequence; the same holds for $E$. Therefore
\[ f^{\ast}N^{\ast}_{F|Y}=f^{\ast}\cO_Y(-F) \vert_F=\cO_{X}(-E) \vert_E=N^{\ast}_{E|X}. \qedhere \]
\end{proof}

\begin{lemma}\label{lemma: criterion for embedding of blowups}
Let $f:X \to Y$ be a morphism between two smooth projective varieties. Let $Z\subseteq
Y$ be a smooth subvariety such that $W\colonequals f^{-1}(Z)$ is smooth and
properly contained in $X$. Denote by $\tilde{Y} = \bl_Z Y$ and $\tilde{X} = \bl_W
X$ with the following diagram
\[ \begin{tikzcd}
		E \arrow[r,hookrightarrow] \arrow[d]& \tilde{X} \arrow[r,"\tilde{f}"] \arrow[d] &
		\tilde{Y} \arrow[d] \arrow[r,hookleftarrow] & F\arrow[d]\\
W\arrow[r,hook] & X \arrow[r,"f"] & Y \arrow[r,hookleftarrow] &Z
\end{tikzcd}
\]
where $\tilde{f} : \tilde{X} \to \tilde{Y}$ is the
induced morphism and $F$ and $E$ are the exceptional divisors. Then 
\begin{equation}\label{eqn: functoriality of blow ups}
	\tilde{f}^{-1}(F)=E, \quad \tilde{f}^{\ast}N^{\ast}_{F|\tilde{Y}}=N^{\ast}_{E|\tilde{X}}.
\end{equation}
Furthermore, suppose
\begin{itemize}
	\item[(a)] the map $\tilde{f}\big\vert_E: E \to F$ is an embedding,
    \item[(b)] the map $f:X-W \to Y-Z$ is an embedding.
\end{itemize}
Then $\tilde{f}$ is an embedding. 
\end{lemma}

\begin{proof}
It is proved in \cite[Page 442, Fact A]{Bertram92} that $\tilde{f}^{-1}(F)=E$. By assumption, $Z$ and $W$ are smooth, therefore $E$ and $F$ are smooth divisors and we can apply Lemma \ref{lemma: pull back of conormal bundles} to obtain \eqref{eqn: functoriality of blow ups}.

By condition (b),  $\tilde{f}$ is an embedding away from $E$. Condition (a) implies
that $\tilde{f}$ is set-theoretically injective over $E$. Therefore, it suffices to show
that \[d\tilde{f}:\tilde{f}^{\ast} \ctbl{\tilde{Y}} \to \ctbl{\tilde{X}}\]
is surjective over $E$.  Consider the following commutative diagram:
\[ \begin{tikzcd}
		0 \arrow[r] & \tilde{f}^{\ast}N^{\ast}_{F|\tilde{Y}} \arrow[r] \arrow[d,"
		\cong"] &
		\tilde{f}^{\ast} \ctbl{\tilde{Y}} \big\vert_{E} \arrow[r] \arrow[d,"d\tilde{f}"] &
		\tilde{f}^{\ast} \ctbl{F} \arrow[r] \arrow[d,"d(\tilde{f}|_E)"] & 0\\
		0 \arrow[r] & N^{\ast}_{E|\tilde{X}} \arrow[r] & \ctbl{\tilde{X}} \big\vert_{E}
		\arrow[r] & \ctbl{E} \arrow[r] & 0
\end{tikzcd}
\]
By \eqref{eqn: functoriality of blow ups}, the arrow
on the left is an isomorphism. Since $\tilde{f} \big\vert_{E}$ is an embedding, the
arrow on the right is surjective.  By the snake lemma, we conclude that the arrow in
the middle is also surjective, and conclude that $\tilde{f}$ is an
embedding. .
\end{proof}

In the situation of \eqref{eqn: blow up map from Zi to Z}, to prove $\bl_i(h_i)$ is an embedding, usually we also know $h_i:Z_i \to Z$ is a map of chains of centers and thus we give a generalization of Lemma \ref{lemma: criterion for embedding of blowups} in such situations. Let $\{ f_i: X_i \to X\}_{i=0}^{j-1}$ and $\{g_i:Y_i \to
Y\}_{i=0}^{j-1}$ be two chains of centers, and let $\phi: X \to Y$ be a map of chains
of centers, so that the map $\bl_i(\phi): \bl_i(X) \to \bl_i(Y)$
exists for each $i\leq j$ and 
\begin{equation}\label{eqn: conditions for map of chains of centers}
    \bl_{i}(\phi)^{-1} \bigl( \bl_i(X_i) \bigr)=\bl_i(Y_i), 
	 \quad \forall 0\leq i \leq j-1.
\end{equation} 
For each $i$, consider the following diagram:
\begin{equation}\label{eqn: diagram of exceptional divisors}
     \begin{tikzcd}
E_i \arrow[r,hookrightarrow] \arrow[d] & \bl_j(X) \arrow[r,"\bl_j(\phi)"]
\arrow[d] & \bl_j(Y) \arrow[d] \arrow[r,hookleftarrow] & F_i \arrow[d]\\
\bl_i(X_i)\arrow[r,hookrightarrow] & \bl_{i}(X) \arrow[r,"\bl_{i}(\phi)"] &
\bl_{i}(Y)  \arrow[r,hookleftarrow] & \bl_i(Y_i)
\end{tikzcd}
\end{equation}
Here $E_i,F_i$ are exceptional divisors. Repeatedly applying \eqref{eqn: functoriality of blow ups} and \eqref{eqn: conditions for map of chains of centers}, one has
\begin{equation}\label{eqn: pull back of Fi is Ei}
    \bl_j(\phi)^{-1}(F_i)=E_i, \quad \forall 0\leq i \leq j-1.
\end{equation} 

\begin{lemma}\label{lemma: embedding for multiple blow ups}
Assume $X,Y$ are smooth projective varieties, and
\begin{itemize} 
	\item[(a)] the two chains $\{ f_i\}_{i=0}^{j-1},\{ g_i\}_{i=0}^{j-1}$ are NCD chains,
    \item[(b)] for every $0 \leq i \leq j-1$, the induced  map
		 \[ \bl_j(\phi): E^{\circ}_i\to F^{\circ}_i \]
is an embedding, where $E_i^{\circ}=E_i-\cup_{\ell<i} E_{\ell}$, the same for $F_i^{\circ}$,
	 \item[(c)] and the map $\phi: X-f_{j-1}(X_{j-1}) \to Y$ is an embedding.
\end{itemize}
Then $\bl_j(\phi)$ is an embedding. 
\end{lemma}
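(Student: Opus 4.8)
The plan is to show that $\bl_j(\phi)$ is set-theoretically injective and that the cotangent map $d\bl_j(\phi)\colon \bl_j(\phi)^{\ast}\ctbl{\bl_j(Y)} \to \ctbl{\bl_j(X)}$ is surjective; since $\bl_j(X)$ and $\bl_j(Y)$ are smooth and projective (the centers being smooth by condition (a)), an injective immersion between them is automatically a closed embedding, exactly as in the conclusion of Lemma \ref{lemma: criterion for embedding of blowups}. The organizing idea is to use the locally closed decompositions
\[ \bl_j(X)=\bigl(\bl_j(X)-\bigcup_{i} E_i\bigr)\sqcup \bigsqcup_{i=0}^{j-1} E^{\circ}_i, \qquad \bl_j(Y)=\bigl(\bl_j(Y)-\bigcup_{i} F_i\bigr)\sqcup \bigsqcup_{i=0}^{j-1} F^{\circ}_i, \]
and to verify both properties one stratum at a time. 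The basic compatibility is \eqref{eqn: pull back of Fi is Ei}, which gives $\bl_j(\phi)^{-1}(F_i)=E_i$ for all $i$; with Notation \ref{notation: open parts of divisors} this upgrades to $\bl_j(\phi)^{-1}(F^{\circ}_i)=E^{\circ}_i$ and $\bl_j(\phi)^{-1}(\bl_j(Y)-\bigcup_i F_i)=\bl_j(X)-\bigcup_i E_i$, so $\bl_j(\phi)$ carries each stratum into the corresponding one.

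For injectivity I would argue stratum by stratum. On the open stratum, Remark \ref{remark: complement of union of divisors} identifies $\bl_j(X)-\bigcup_i E_i$ with $X-f_{j-1}(X_{j-1})$ and the restriction of $\bl_j(\phi)$ with $\phi$, which is injective by condition (c). On each $E^{\circ}_i$ the restriction of $\bl_j(\phi)$ is the embedding of condition (b), hence injective. Because the strata of the source map into matching, pairwise disjoint strata of the target, two points with the same image must lie in the same stratum, and injectivity on that stratum forces them to coincide.

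For surjectivity of the cotangent map I would again proceed stratum by stratum. Over the open stratum it holds because $\phi$ restricts to an embedding there by (c). Over each $E^{\circ}_i$ I would reuse the morphism of conormal sequences from the proof of Lemma \ref{lemma: criterion for embedding of blowups}, with $\tilde{f}$, $F$, $E$ replaced by $\bl_j(\phi)$, $F_i$, $E_i$ and all terms restricted to $E^{\circ}_i$. Its left vertical arrow $\bl_j(\phi)^{\ast}N^{\ast}_{F_i|\bl_j(Y)}\to N^{\ast}_{E_i|\bl_j(X)}$ is an isomorphism by \eqref{eqn: pull back of Fi is Ei} together with Lemma \ref{lemma: pull back of conormal bundles} (the divisors $E_i$, $F_i$ being smooth thanks to the smooth-centers part of (a)), and its right vertical arrow is surjective because $\bl_j(\phi)\colon E^{\circ}_i\to F^{\circ}_i$ is an embedding by (b). The snake lemma then forces the middle arrow $d\bl_j(\phi)$ to be surjective over $E^{\circ}_i$. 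As the open stratum and the $E^{\circ}_i$ cover $\bl_j(X)$, surjectivity holds everywhere, and together with injectivity this finishes the proof.

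The main obstacle I expect is the immersion statement at the exceptional loci: one must be certain that the left-hand conormal arrow really is an isomorphism, which is where the smoothness of the centers (so that $E_i$, $F_i$ are smooth divisors and Lemma \ref{lemma: pull back of conormal bundles} applies) and the scheme-theoretic identity \eqref{eqn: pull back of Fi is Ei} are both essential; condition (a) is used precisely to supply this smoothness. By contrast, the set-theoretic injectivity and the bookkeeping of strata are routine once $\bl_j(\phi)^{-1}(F^{\circ}_i)=E^{\circ}_i$ is in place. This is essentially the argument embedded in the proof of \cite[Proposition 2.2]{Bertram92}, now phrased so that only conditions (a)--(c) are needed.
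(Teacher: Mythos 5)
Your proof is correct and follows essentially the same route as the paper's: the same conormal-bundle isomorphism obtained from \eqref{eqn: pull back of Fi is Ei} and Lemma \ref{lemma: pull back of conormal bundles} (using the smoothness supplied by (a)), the same snake-lemma diagram restricted to each $E_i^{\circ}$ with condition (b), and the same use of Remark \ref{remark: complement of union of divisors} together with condition (c) on the open stratum. The only difference is that you spell out the set-theoretic injectivity stratum by stratum, which the paper's proof leaves implicit.
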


\begin{proof}
The transversality condition (a) guarantees that $F_i$ and $E_i$ are smooth divisors in smooth projective varieties. Together with \eqref{eqn: pull back of Fi is Ei}, we can apply Lemma \ref{lemma: pull back of conormal bundles} to obtain
\begin{equation*}
    \bl_j(\phi)^{\ast}N^{\ast}_{F_i|\bl_j(Y)}=N^{\ast}_{E_i|\bl_j(X)}.
\end{equation*}
Restricting to $E^{\circ}_i$ and $F^{\circ}_i$ gives
\begin{equation}\label{eqn: identification of conormal bundles of blowups on opens}
    \bl_j(\phi)^{\ast}N^{\ast}_{F^{\circ}_i|\bl_j(Y)}
	 =N^{\ast}_{E^{\circ}_i|\bl_j(X)}.
\end{equation}  
Consider the following commutative diagram:
\[ \begin{tikzcd}
0 \arrow[r] & \bl_j(\phi)^{\ast}N^{\ast}_{F^{\circ}_i|\bl_j(Y)} \arrow[r]
\arrow[d,"\cong"] & \bl_j(\phi)^{\ast} \ctbl{\bl_j(Y)}
\big\vert_{E^{\circ}_i} \arrow[r] \arrow[d,"d\bl_j(\phi)"] &
\bl_j(\phi)^{\ast} \ctbl{F^{\circ}_i} \arrow[r] \arrow[d,"d(\bl_j(\phi)
\vert_{E^{\circ}_i})"] & 0\\
0 \arrow[r] & N^{\ast}_{E^{\circ}_i|\bl_j(X)} \arrow[r] &
\ctbl{\bl_j(X)}\big\vert_{E^{\circ}_i} \arrow[r] & \ctbl{E^{\circ}_i} \arrow[r] & 0
\end{tikzcd}
\]
Using \eqref{eqn: identification of conormal bundles of blowups on opens}, the condition (b) and the snake lemma, we see that
\[ d\bl_j(\phi): \bl_j(\phi)^{\ast} \ctbl{\bl_j(Y)} \to \ctbl{\bl_j(X)} \]
is surjective over each $E^{\circ}_i$. By Remark \ref{remark: complement of union of
divisors}, the set $X-f_{n}(X_{n})$ naturally embeds into $\bl_j(X)$ with
complement $\cup_{i}E_i$, and condition (c) says that  $d\bl_j(\phi)$ is
surjective away from $\cup_{i}E_i$. Since $\bigcup_{i}E_i=\bigcup_i E^{\circ}_i$,
we conclude that $\bl_j(\phi)$ is an embedding.
\end{proof}

\begin{remark}
	In condition (b), we can ask $\bl_j(\phi): E_i \to F_i$ to be
an embedding, but in practice it is much easier to check this condition on open
subsets inductively.
\end{remark}

\subsection{Criterion for a proper chain to be NCD}

Putting everything together, we have the following inductive criterion for a NCD chain.
\begin{prop}\label{prop: injective map imply smooth chain and normal crossing} 
Let $\{\phi_j:X_j\to X\}_{j=0}$ be a proper chain with diagrams for each $j<k$
\[ \begin{tikzcd}
X_{j,k} \arrow[d,"f_{j,k}"]\arrow[r,twoheadrightarrow] & X_j\arrow[d,"\phi_j"]\\
X_k \arrow[r,"\phi_k"] & X
\end{tikzcd}\]
Let $k$ be an integer. Assume the following conditions are satisfied.
\begin{enumerate}[(I)]
\item The chains $\{\phi_j\}_{j=0}^{k-1}$ and $\{f_{j,k}\}_{j=0}^{k-1}$ are NCD.
\item The map $\phi_k:X_k-f_{k-1,k}(X_{k-1,k})\to X$ is an embedding and $\phi_k^{-1}(X_0)=X_{0,k}$.
\item Let $j< k$. Suppose the blowup spaces $\bl_{i}(X_k)$ associated to $\{f_{i,k}\}_{i=0}^{j-1}$ and $\{\phi_i\}_{i=0}^{j-1}$ coincide for all $i$ so that the following diagram exists for $i<j$
\begin{equation*}
\begin{tikzcd}
{} & {} & \bl_j(X_j) \arrow[d,"\bl_j(\phi_j)"]& {}\\
E_{i}\arrow[r,hookrightarrow] \arrow[d] &\bl_j(X_k) \arrow[r,"\bl_j(\phi_k)"] \arrow [d] &\bl_j(X) \arrow[d]\arrow[r,hookleftarrow] &F_{i}\arrow[d]\\
\bl_i(X_{i,k}) \arrow[r,hookrightarrow] &\bl_i(X_k) \arrow[r,"\bl_i(\phi_k)"] &\bl_i(X) \arrow[r,hookleftarrow] &\bl_i(X_i) 
\end{tikzcd}
\end{equation*}
where $E_{i},F_{i}$ are exceptional divisors, then 
\begin{align*}
\bl_j(\phi_k)^{-1}(\bl_j(X_j)-\cup_{i<j} F_{i})=\bl_j(X_{j,k})-\cup_{i<j} E_i, \tag{$\ast$}\\
\bl_j(\phi_k)^{-1}( \bl_j(X_j)\cap F_{i}^{\circ})=\bl_j(X_{j,k})\cap E_{i}^{\circ}, \quad \forall i<j\tag{$\ast\ast$}.\\
\end{align*}
Moreover, suppose the space $\bl_k(\phi_k)$ for the chains $\{f_{i,k}\}_{i=0}^{k-1}$ and $\{\phi_i\}_{i=0}^{k-1}$ coincide, then
\begin{align*} 
\bl_k(\phi_k):E_{i}^{\circ}\to F_{i}^{\circ} \textrm{ is an embedding}, \quad \forall i<k,\tag{$\ast\ast\ast$}
\end{align*}
where $E_i^{\circ}=E_i-\cup_{h<i}E_h$.

\end{enumerate}

Then $\phi_k$ is a map of chains of centers and the chain $\{\phi_j\}_{j=0}^k$ is a NCD chain.
\end{prop}
\begin{remark}
The Condition (I) is inductive. The condition (II) are properties of the proper chains and the condition (III) are certain compatibility conditions of blowups associated to the chains $\{f_{i,k}\}_{i=0}^{k-1}$ and $\{\phi_i\}_{i=0}^{k-1}$.
\end{remark}
\begin{proof}
It suffices to prove the following statements.
\begin{enumerate}[(i)]
\item $\bl_j(\phi_k)^{-1}(\bl_j(X_j))= \bl_j(X_{j,k})$ for all $j<k$.
\item $\bl_k(X_k)$ is smooth.
\item The map $\bl_k(\phi_k):\bl_k(X_k)\to \bl_k(X)$ is an embedding.
\end{enumerate}

We prove statement (i) by induction on $j$. The base case follows from condition (II). Suppose it holds for all $i<j$, i.e.
\begin{equation*}
\bl_i(\phi_k)^{-1}(\bl_i(X_i))=\bl_i(X_{i,k}), \quad \forall i<j.
\end{equation*}
It follows that $\bl_j(\phi_k)^{-1}(F_i)=E_i$ for all $i<j$. Furthermore, this says that the space $\bl_{i}(X_k)$ associated to $\{f_{i,k}\}_{i=0}^{j-1}$ and $\{\phi_i\}_{i=0}^{j-1}$ coincide for all $i\leq j-1$, so that the conditions $(\ast)$ and $(\ast\ast)$ hold by (III). On the other hand, Condition (I) implies that $\bl_j(X_j),\{E_i\}_{i=0}^{j-1}$ and $\bl_j(X_{j,k}),\{F_i\}_{i=0}^{j-1}$ are smooth subvarieties of $\bl_j(X)$ and $\bl_j(X_k)$, respectively. Using conditions (I),$(\ast),(\ast\ast)$, we can apply Lemma \ref{lemma: criterion for checking scheme-theoretic pull back} to the map $\bl_j(\phi_k)$ to conclude that 
\[ \bl_j(\phi_k)^{-1}(\bl_j(X_j))= \bl_j(X_{j,k}).\]
This finishes the inductive proof.

Statement (i) implies that the space $\bl_k(\phi_k)$ for the chains $\{f_{i,k}\}_{i=0}^{k-1}$ and $\{\phi_i\}_{i=0}^{k-1}$ coincide, so that the condition $(\ast\ast\ast)$ holds. Since the chain $\{f_{j,k}\}_{j=0}^{k-1}$ is NCD by (I), $\bl_k(X_k)$ must be smooth. Using conditions (I), (II),$(\ast\ast\ast)$, Lemma \ref{lemma: embedding for multiple blow ups} implies that $\bl_k(\phi_k)$ is an embedding. We finish the proof.
\end{proof}

Since the exceptional divisors in smooth blowups are projective bundles, to verify the
assumptions $(\ast\ast)$ and $(\ast\ast\ast)$ in Proposition \ref{prop: injective map imply smooth chain and normal crossing}  in practice, we need relative versions of some lemmas in previous subsections.

\begin{lemma}\label{lemma: fiberwise embedding,pullback etc}
Let $\phi:X\to Y$ be a $B$-morphism of smooth algebraic varieties over a smooth
variety $B$, such that $f$ and $g$ are smooth morphisms:
\[ \begin{tikzcd}
X \arrow[dr,"f"] \arrow[rr,"\phi"]& & Y\arrow[dl,"g"]\\
& B &
\end{tikzcd}
\]
Denote the induced map over a closed point $t\in B$ by $\phi_t:X_t\to
Y_t$. Then the following hold.
\begin{enumerate}
    \item If $\phi_t$ is an embedding for each $t$, then $\phi$ is an embedding.
    \item Let $X_1\subseteq X$ and $Y_1\subseteq Y$ be smooth subvarieties. If $Y_t\cap Y_1,X_t\cap X_1$ are smooth and $\phi_t^{-1}(Y_t\cap Y_1)=X_t\cap X_1$ for each $t$, then $\phi^{-1}(Y_1)=X_1$. 
    \item Suppose $\phi$ is an embedding, then $\bl_X(Y)$ is a $B$-variety and the fiber over $t\in B$ is $\bl_{X_t}(Y_t)$.
\end{enumerate}
\end{lemma}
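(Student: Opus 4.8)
The plan is to treat all three statements as instances of a single principle: because $f$ and $g$ are smooth, geometric conditions on $\phi$ can be detected on the fibres $\phi_t$ by combining the relative cotangent sequences with Nakayama's lemma and base change. Throughout I would use the short exact, locally split, sequences $0\to f^{\ast}\Omega_B\to \Omega_X\to \Omega_{X/B}\to 0$ and $0\to g^{\ast}\Omega_B\to \Omega_Y\to \Omega_{Y/B}\to 0$ coming from smoothness of $f$ and $g$, together with the fact that $\phi$ is a $B$-morphism, so that $\phi^{\ast}g^{\ast}\Omega_B=f^{\ast}\Omega_B$ and $d\phi$ is a map between these two sequences which is the identity on the common subsheaf.

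For part (1), I would first check injectivity set-theoretically: if $\phi(x)=\phi(x')$ then $f(x)=g(\phi(x))=f(x')=:t$, so $x,x'\in X_t$ and $\phi_t(x)=\phi_t(x')$, whence $x=x'$ as $\phi_t$ is an embedding. Next I would show $\phi$ is unramified. The map induced by $d\phi$ on relative differentials, $\phi^{\ast}\Omega_{Y/B}\to \Omega_{X/B}$, restricts over each $t$ to $d\phi_t\colon \phi_t^{\ast}\Omega_{Y_t}\to \Omega_{X_t}$, which is surjective because $\phi_t$ is an embedding; by Nakayama it is surjective, and then the snake lemma applied to the two cotangent sequences (the left terms being identified) forces $d\phi\colon \phi^{\ast}\Omega_Y\to \Omega_X$ to be surjective, i.e.\ $\Omega_{X/Y}=0$. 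A proper, injective, unramified morphism is a proper monomorphism, hence a closed embedding; properness holds since $X$ is proper over $B$ and $Y$ is separated over $B$ in our applications (without properness one still obtains a locally closed immersion).

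For part (2), I would first identify the fibres of the scheme-theoretic preimage. Writing $W\colonequals \phi^{-1}(Y_1)=X\times_Y Y_1$ and using that fibre products commute with the base change $\Spec\kappa(t)\to B$, one gets $W_t=X_t\times_{Y_t}(Y_1)_t=\phi_t^{-1}(Y_t\cap Y_1)=X_t\cap X_1=(X_1)_t$ as subschemes of $X_t$, for every $t$. Moreover $\phi(X_1)\subseteq Y_1$ set-theoretically and $X_1$ is reduced, so $X_1\hookrightarrow W$ as closed subschemes of $X$; it remains to promote the fibrewise equality $(X_1)_t=W_t$ to $X_1=W$. As in the proof of Lemma \ref{lemma: criterion for checking scheme-theoretic pull back}, since the set-theoretic preimage is correct and $X_1$ is smooth, this amounts to surjectivity of $d\phi\colon \phi^{\ast}N^{\ast}_{Y_1|Y}\to N^{\ast}_{X_1|X}$, which by Nakayama can be checked at each $x\in X_1$. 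The crux, and the step I expect to be the main obstacle, is the identification $N^{\ast}_{X_1|X}\big\vert_{(X_1)_t}\cong N^{\ast}_{(X_1)_t|X_t}$ (and similarly for $Y$), under which this map becomes $d\phi_t$ on conormal bundles and is surjective by hypothesis. This identification is exactly transversality of $X_1$ (resp.\ $Y_1$) to the fibres; since $f$ is smooth one has $T_xX_t=\ker(df_x)$ and $T_x(X_1)_t=T_xX_1\cap T_xX_t$, so that smoothness of the scheme-theoretic fibre $(X_1)_t$ is what one must leverage to produce $T_xX_1+T_xX_t=T_xX$. In the applications $X_1,Y_1$ are families of subvarieties over $B$ with equidimensional smooth fibres, so one may instead invoke miracle flatness to conclude that $X_1\to B$ and $Y_1\to B$ are smooth, and then finish by flat base change and the vanishing of $\mathrm{Tor}_1^{\mathcal O_B}(\mathcal O_{X_1},\kappa(t))$.

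For part (3), once $\phi\colon X\hookrightarrow Y$ is a closed embedding of smooth $B$-varieties, it is automatically a regular immersion and $X,Y$ are flat over $B$. Blowing up a regularly embedded centre whose source and ambient space are both flat over the base commutes with arbitrary base change: $N^{\ast}_{X|Y}$ is locally free, the natural surjection $\Sym^{\bullet}N^{\ast}_{X|Y}\to \bigoplus_n \cI_X^{n}/\cI_X^{n+1}$ is an isomorphism, and each $\cI_X^{n}$ is $B$-flat, so $\bl_X Y=\operatorname{Proj}_{\mathcal O_Y}\bigoplus_n \cI_X^{n}$ is $B$-flat with formation compatible with base change. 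Restricting along $\Spec\kappa(t)\to B$ then gives $(\bl_X Y)_t=\operatorname{Proj}\bigoplus_n \cI_{X_t}^{n}=\bl_{X_t}(Y_t)$, where $X_t\hookrightarrow Y_t$ is again a regular immersion (being a closed embedding of smooth varieties, by part (1)). I would cite the standard compatibility of blowups of regular immersions with base change for this last point.
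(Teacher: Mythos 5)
Your proposal is correct and follows essentially the same route as the paper's (very compressed) proof: each part is reduced to a fibrewise statement via Nakayama's lemma and the snake lemma applied to cotangent/conormal sequences, and part (3) is handled by flatness and base change for blowups whose centre and ambient space are smooth over $B$.

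What you add beyond the paper is worth keeping, because the lemma as literally stated needs the implicit hypotheses you flag. For (2), you correctly observe that the key identification $N^{\ast}_{X_1|X}\big\vert_{X_1\cap X_t}\cong N^{\ast}_{X_1\cap X_t|X_t}$ is equivalent to transversality of $X_1$ to the fibres of $f$, and does not follow from smoothness of the intersections alone, which is all the paper offers as justification. In fact, without some such hypothesis the statement is false: take $B=X=\bA^1$ with coordinate $t$ and $f=\mathrm{id}$; let $Y=\bA^2$ with coordinates $(y,t)$ and $g$ the second projection; let $\phi(t)=(t,t)$; and set $Y_1=\{y=t+t^2\}$, $X_1=\{0\}$. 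Every hypothesis of (2) holds --- each intersection and each fibrewise preimage is either empty or a single reduced point --- yet $\phi^{-1}(Y_1)=V(t^2)\neq X_1$, because the tangency between $\phi(X)$ and $Y_1$ points along the base direction and is invisible in the fibres. Your remedy (in the applications $X_1$, $Y_1$ have equidimensional smooth fibres over $B$, so miracle flatness makes $X_1\to B$ and $Y_1\to B$ smooth, hence transverse to the fibres, and the conormal identification holds) is exactly the situation in which the paper invokes the lemma: there $X_1$ and $Y_1$ are always $U_{2i}$-varieties such as projective bundles, secant bundles, and blowups thereof. Likewise, your properness caveat in (1) is genuinely needed to pass from ``injective and unramified'' to ``closed embedding'' (an injective unramified morphism is a monomorphism, but a monomorphism need not be an immersion without properness), and is harmless here since all applications involve projective varieties. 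Finally, your argument for (3) --- $B$-flatness of $\cO_Y/\cI_X^{n}$ via the locally free graded pieces $\Sym^{n}N^{\ast}_{X|Y}$, then compatibility of $\operatorname{Proj}$ with base change --- is a complete version of what the paper dismisses as ``local computations''.
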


\begin{proof}
	For (1), it suffices to check that the differential $d\phi: \phi^{\ast} \ctbl{Y}
	\to \ctbl{X}$ is surjective, which can be checked by restriction to each $X_t$; the
proof is similar to that of Lemma \ref{lemma: criterion for embedding of blowups} by using
the isomorphism
\[ \phi^{\ast} N^{\ast}_{Y_t|Y} \cong N^{\ast}_{X_t|X}.\]
For (2), it suffices to check the surjectivity of the induced map
\[ d\phi: \phi^{\ast}N^{\ast}_{Y_1|Y}\to N^{\ast}_{X_1|X}.\]
The proof is similar to that of Lemma \ref{lemma: criterion for checking scheme-theoretic pull back} and uses the isomorphism
\[ N^{\ast}_{Y_1|Y} \big\vert_{Y_1\cap Y_t}\cong N^{\ast}_{Y_1\cap Y_t|Y_t} \]
because $Y_1\cap Y_t$ is smooth; likewise for $N^{\ast}_{X_1|X}$. (3) follows from
local computations.
\end{proof}

\section{Reduction of the proof of Theorem \ref{theorem: log resolution of
hyperelliptic theta divisors}}\label{sec: Abel-Jacobi maps}

In this section, we reduce the proof of Theorem \ref{theorem: log resolution of
hyperelliptic theta divisors} to two propositions about certain natural proper chains being NCD chains, using the  framework developed in \S\ref{sec: Bertram's blow
up construction}. Let $C$ be a hyperelliptic curve of odd genus $g=2n+1$. The even
genus case is similar, and will be treated separately in \S\ref{sec: even genus case}. 
Let $g^1_2$ be the line bundle corresponding to the hyperelliptic map $h : C\to \bP^1$
and denote by $C_j\colonequals \Sym^j(C)$ the $j$-th symmetric product of $C$; we
view the closed points of $C_j$ as effective divisors of degree $j$ on the curve $C$,
and let $C_0$ be the scheme parametrizing the trivial divisor on $C$.
Consider the following sequence of morphisms 
\begin{equation}\label{eqn: the chain of maps to Jacobian}
	\begin{split}
    \delta_j: C_{2j} &\to \Jac(C)=\mathrm{Pic}^{g-1}(C), \quad 0\leq j \leq n,\\
    D &\mapsto (n-j)g^1_2\otimes \cO_{C}(D).
 \end{split}
 \end{equation}
By the Abel-Jacobi theorem, we have $\delta_{j}(C_{2j})=W^{n-j}_{g-1}$ and a
natural embedding 
\[\bP^{j}=\delta_{j}^{-1}(ng^1_2)\hookrightarrow C_{2j}.\]
For $i<j$, we have a commutative diagram
\begin{equation}\label{eqn: diagram for abel jacobi maps}
\begin{tikzcd}
C_{2i} \times \bP^{j-i} \arrow[r,twoheadrightarrow,"p_1"] \arrow[d,"\gamma_{i,j}"] & C_{2i}\arrow[d,"\delta_{i}"]\\
C_{2j} \arrow[r,"\delta_j"] & \Jac(C),
\end{tikzcd}
\end{equation} 
where $p_1$ denotes the projection to the first factor and 
\begin{equation}\label{eqn: the chain of maps to symmetric products}
    \gamma_{i,j}: C_{2i}\times \bP^{j-i} \hookrightarrow C_{2i}\times C_{2j-2i} \to C_{2j}, \quad 0\leq i \leq j-1,
\end{equation}
is the composition of the natural embedding $\bP^{j-i}\hookrightarrow C_{2j-2i}$ and the addition map on symmetric products. Since 
\[\delta_{i}(C_{2i})=W^{n-i}_{g-1}\subsetneq W^{n-j}_{g-1}=\delta_{j}(C_{2j}),\]
the chain $\{\delta_j\}_{j=0}^{n}$ is proper.

To understand the chain $\{\delta_j\}_{j=0}^{n}$, we need several auxiliary chains. The first such chain is
\begin{equation}\label{eqn: chain gamma ik}
\{\gamma_{i,k}:C_{2i}\times \bP^{k-i}\to C_{2k}\}_{i=0}^{k-1},
\end{equation}
and for $i<j<k$, we have the following commutative
diagram:
  \begin{equation}\label{eqn: diagram for addition maps}
  \begin{tikzcd}
    C_{2i}\times \bP^{j-i} \times \bP^{k-j} \arrow
	 [r,twoheadrightarrow,"\mathrm{id}\times r"] \arrow[d,"\gamma_{i,j}\times \id"] & C_{2i} \times \bP^{k-i} \arrow[d,"\gamma_{i,k}"]\\
    C_{2j}\times \bP^{k-j} \arrow[r,"\gamma_{j,k}"] & C_{2k} 
    \end{tikzcd}
    \end{equation}
Here $r$ is the restriction of the addition map $C_{2(j-i)}\times C_{2(k-j)} \to
C_{2(k-i)}$, which coincides with the addition map for symmetric products of
$\bP^1$ by thinking $\bP^{\ell}$ as $\Sym^{\ell}\bP^1$. It is easy to see that $\gamma_{i,k}(C_{2i}\times \bP^{k-i})$ parametrizes
effective divisors $D$ of degree $2k$ such that 
$h^0(\cO_C(D))\geq k-i+1$. It follows that
\[ \gamma_{i,k}(C_{2i}\times \bP^{k-i})\subsetneq \gamma_{j,k}(C_{2j}\times \bP^{k-j}), \quad i<j,\]
and so $\{\gamma_{i,k}\}_{i=0}^{k-1}$ is a proper chain.

The second auxiliary chain lives over $C_{2j} \times \bP^{k-j}$ for a fixed tuple $(j,k)$ with $j<k$. Consider the chain 
\[\bigl\{\gamma_{i,j}\times
	\id:(C_{2i}\times \bP^{j-i})\times \bP^{k-j}\to
C_{2j}\times \bP^{k-j}\bigr\}_{i=0}^{j-1}
\]
which is induced by taking the product of the chain $\{\gamma_{i,j}\}_{i=0}^{j-1}$
with $\bP^{k-j}$, as in Notation \ref{notation: chain product with a constant factor}. Fortunately, because of the product structure of this chain, the inductive
process stops here and no further auxiliary chains are needed.

Denote by $\bl_i(C_{2j}\times \bP^{j-k})$ and $\bl_i(C_{2k})$ the spaces associated to the chain $\{\gamma_{j,k}\}_{j=0}^{k-1}$. Denote by $\bl_i(C_{2i}\times \bP^{j-i}\times \bP^{k-j})$ the spaces associated to the chain $\{\gamma_{i,j}\times \mathrm{id}\}_{i=0}^{j-1}$. In \S \ref{sec: proof of chain of additions maps}, the following will be proved.

\begin{prop}\label{prop: chain of addition maps}
Let $k$ be an integer such that $1\leq k\leq n$. Then the proper chain $\{\gamma_{j,k}\}_{j=0}^{k-1}$
is an NCD chain and for each $j<k$, the map
\[  \gamma_{j,k}: C_{2j}\times \bP^{k-j} \to C_{2k}\]
is a map of chains of centers from $\{\gamma_{i,j} \times \id\}_{i=0}^{j-1}$ to
$\{\gamma_{i,k}\}_{i=0}^{j-1}$. Concretely, this means the following things:
\begin{itemize}
	\item[(a)] For $0 \leq i < k$, there is a closed embedding	\begin{equation*}\label{eqn:bli C2itimes Pk}
	\bl_i(\gamma_{i,k}):\bl_i(C_{2i} \times
		\bP^{k-i}) \hookrightarrow
		\bl_i(C_{2k}),\end{equation*}
		whose
		image intersects the union of all the exceptional divisors in $\bl_i(C_{2k})$
		transversely.
	\item[(b)] There is a natural embedding $C_{2k}-\gamma_{k-1,k}(C_{2k-2}\times
		\bP^1)\hookrightarrow \bl_{k}(C_{2k})$, whose complement has $k$ smooth
		components with normal crossings.
	\item[(c)] $\bl_i(C_{2j}\times \bP^{j-k})$ coincides with the space associated to $\{\gamma_{i,j}\times \mathrm{id}\}_{i=0}^{j-1}$ and one has a natural identification
 \[ \bl_i \bigl( C_{2i} \times \bP^{j-i}  \times \bP^{k-j}\bigr)=\bl_i(C_{2i})\times \bP^{j-i}  \times \bP^{k-j},\]
so that for each $i<j<k$, one has a Cartesian diagram
	\[ \begin{tikzcd}
			\bl_i ( C_{2i}) \times \bP^{j-i}  \times \bP^{k-j}
		\dar[hook]{\bl_i(\gamma_{i,j})\times \id} \rar{\id \times r} & 
		\bl_i (C_{2i}) \times \bP^{k-i}  \dar[hook]{\bl_i(\gamma_{i,k})} \\
		\bl_i(C_{2j}) \times \bP^{k-j} \rar{\bl_i(\gamma_{j,k})} & \bl_i(C_{2k})
 	\end{tikzcd}, \]
  where $r:\bP^{j-i}  \times \bP^{k-j}\to \bP^{k-i}$ is the addition map.
\end{itemize}

\end{prop}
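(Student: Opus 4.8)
The plan is to prove the proposition by induction on $k$, with an inner induction on the stage $j$ of the chain, using the inductive criterion of Lemma \ref{lemma: injective map imply smooth chain and normal crossing} together with Lemma \ref{lemma: injective map imply normal crossing}. For the base case $k=1$ the chain consists of the single map $\gamma_{0,1}: C_0\times\bP^1\hookrightarrow C_2$, which is just the embedding of the linear system $|g^1_2|\hookrightarrow C_2$, so there is nothing to check. Hence assume the proposition holds for every $k'<k$, and let me establish it for $k$.

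First I would verify the hypotheses of Lemma \ref{lemma: injective map imply normal crossing} for the chain $\{\gamma_{j,k}\}_{j=0}^{k-1}$, in the roles $X=C_{2k}$, $X_j=C_{2j}\times\bP^{k-j}$, $\phi_j=\gamma_{j,k}$, with auxiliary chains $\{\gamma_{i,j}\times\id\}_{i=0}^{j-1}$ coming from diagram \eqref{eqn: diagram for addition maps}. Hypothesis (1), that each auxiliary chain is NCD, is exactly where the outer induction enters: by Lemma \ref{lemma: chains with a constant factor} the product chain $\{\gamma_{i,j}\times\id\}_{i=0}^{j-1}$ is NCD if and only if $\{\gamma_{i,j}\}_{i=0}^{j-1}$ is NCD, and the latter is the proposition's chain for the index $j<k$, hence NCD by the induction hypothesis. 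It then remains to produce hypotheses (2) and (3) — that $\{\gamma_{j,k}\}$ is a chain of smooth centers and that each $\gamma_{j,k}$ is a map of chains of centers — which I would obtain by running Lemma \ref{lemma: injective map imply smooth chain and normal crossing} for $j=0,1,\dots,k-1$ in turn. At the $j$-th stage, hypothesis (I) of that lemma is the auxiliary chain being NCD (just checked), hypothesis (III) is supplied by the inner induction, and the remaining inputs are conditions (II), $(\ast)$, $(\ast\ast)$; its output includes $(\ast\ast\ast)$, the embedding $\bl_j(\gamma_{j,k})$ of part (a) with transverse intersection against the exceptional divisors, and the Cartesian diagram of part (c).

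The geometric inputs that still need checking are conditions (II), $(\ast)$, and the conormal statements $(\ast\ast)$, $(\ast\ast\ast)$. Conditions (II) and $(\ast)$ are set-theoretic, and I would handle them directly from the description, established in the proof of Lemma \ref{lemma: intersection}, of $\gamma_{i,k}(C_{2i}\times\bP^{k-i})$ as the locus of effective divisors $D$ of degree $2k$ with $h^0(\cO_C(D))\geq k-i+1$: away from the next deeper stratum the addition map $\gamma_{j,k}$ recovers its two factors uniquely, so it is injective there, and membership in a stratum is governed precisely by $h^0$, which pins down the preimages of the complements. The crux is $(\ast\ast)$ and $(\ast\ast\ast)$, which concern the behaviour of $\bl_j(\gamma_{j,k})$ along the open exceptional divisors $E_i^\circ$, $F_i^\circ$. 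Since these are projectivizations of conormal bundles, hence projective bundles over the respective centers, I would invoke the relative criterion of Lemma \ref{lemma: fiberwise embedding,pullback etc} to reduce both the pullback statement $(\ast\ast)$ and the fiberwise-embedding statement $(\ast\ast\ast)$ to assertions over a single point of the center, i.e.\ to assertions about the induced maps on projectivized fibers.

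The hard part will be identifying these projectivized conormal fibers, and the maps between them, with the secant bundles of \S\ref{sec:Secant bundles and maps between them}, and then quoting Bertram's results. Concretely, because everything is controlled by the hyperelliptic linear systems $\bP^{k-i}=|(k-i)g^1_2|=\Sym^{k-i}\bP^1$, the relevant fibers become projectivized secant bundles over symmetric products of $\bP^1$, and the maps $\bl_j(\gamma_{j,k})\big\vert_{E_i^\circ}$ are modeled on the maps between secant bundles of a rational normal curve; Bertram's theorem that these are embeddings then yields $(\ast\ast\ast)$, while the compatibility of conormal bundles (Lemma \ref{lemma: pull back of conormal bundles} and Lemma \ref{lemma: functoriality of blow ups}) yields $(\ast\ast)$. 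Once all stages $j<k$ are processed, Lemma \ref{lemma: injective map imply normal crossing} delivers that $\{\gamma_{j,k}\}_{j=0}^{k-1}$ is an NCD chain, completing the outer induction; parts (a), (b), (c) are then read off from the conclusions of Lemma \ref{lemma: injective map imply smooth chain and normal crossing} together with Remark \ref{remark: complement of union of divisors} for the normal-crossing complement in (b), and the same secant-variety description will reappear in the proof of Corollary \ref{corollary: generic structure of exceptional divisors}.
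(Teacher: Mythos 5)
Your plan reproduces the paper's own strategy almost step for step: the same outer induction on the target index with base case $\gamma_{0,1}\colon C_0\times\bP^1\hookrightarrow C_2$, the same use of Lemma \ref{lemma: chains with a constant factor} to convert the outer induction hypothesis into hypothesis (I) for the auxiliary product chains, the same interleaving of Lemma \ref{lemma: injective map imply smooth chain and normal crossing} (inner induction over the stages $j$) with Lemma \ref{lemma: injective map imply normal crossing}, the same source for (II) and $(\ast)$ (Lemma \ref{lemma: intersection}), and the same reduction of $(\ast\ast)$ and $(\ast\ast\ast)$ to Bertram's results (Corollary \ref{cor: injective maps of chains for secant bundles over rational curves}) via Lemma \ref{lemma: fiberwise embedding,pullback etc} and the conormal computations of \S\ref{sec: calculation of conormal bundles}.

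There is, however, one substantive under-specification that would surface when you execute the plan. You induct on the proposition statement alone (``assume the proposition holds for every $k'<k$''), but that statement does not record what the exceptional divisors in the lower-level blowup towers look like. To reduce $(\ast\ast)$ and $(\ast\ast\ast)$ to fiberwise assertions via Lemma \ref{lemma: fiberwise embedding,pullback etc}, you must know that the open exceptional loci $E^{\circ}_{i,j}$ and $G^{\circ}_{i,j}$ are smooth over $U_{2i}$ with fibers the \emph{iterated blowups} $\bl_{j-i-1}B^{\bullet}(M)$, respectively $\bl_{j-i-1}(B^{\bullet}(M)\times\bP^{\bullet})$, of secant bundles, and that the restricted maps are the blown-up secant-bundle maps $\bl_{j-i-1}(\alpha_{\bullet,\bullet})$. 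Your description of these divisors as ``projectivizations of conormal bundles'' is literally correct only at the first stage $j=i+1$; at later stages they are strict transforms, and establishing their fiber structure requires its own induction (the paper's induction on $\ell$ in Step 2 of \S\ref{sec: proof of chain of additions maps}). Moreover, identifying the fibers of the $G$-divisors at level $k$ uses the corresponding identification for the level-$j$ tower ($j<k$), so these fiber descriptions must be \emph{carried as part of the outer induction hypothesis}: this is exactly why the paper's Assumption \ref{assumption: inductive hypothesis on t} assumes Claims \ref{claim: gammajt is NCD}--\ref{claim: Gij and Eij} for all smaller indices, not merely the proposition. The fix is routine — strengthen the statement you prove by induction to include the $U_{2i}$-fibered descriptions of the exceptional divisors and of the restricted maps — but as written, the step ``reduce to assertions over a single point of the center'' cannot be carried out, because the induction hypothesis you allow yourself does not tell you what those fibers are.
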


With the help of the proposition above, in \S\ref{sec: proof of chain of abel jacobi maps} we will prove the following proposition. 

\begin{prop}\label{prop: chain of Abel Jacobi maps}
The proper chain $\{\delta_{j}:C_{2j}\to \Jac(C)\}_{j=0}^{n}$ is an NCD chain and the map $\delta_k:C_{2k}\to \Jac(C)$ is a map of chains of centers between the chains $\{\gamma_{j,k}\}_{j=0}^{k-1}$ and $\{\delta_j\}_{j=0}^{k-1}$. Concretely, this means the following. 
\begin{itemize}
	\item[(a)] For $0\leq j \leq n$, the maps $\bl_j(\delta_k)$ associated to the chain $\{\delta_j\}_{j=0}^n$ exist and 
 \[ \bl_j(\delta_j):\bl_j(C_{2j}) \to \bl_j(\Jac(C))\]
 is an embedding so that its image intersects the union of
		all the exceptional divisors in $\bl_j(\Jac(C))$ transversely, where $\bl_j(C_{2k})$ coincides with the blowup space associated to the chain $\{\delta\}_{j=0}^{k-1}$.
	\item[(b)] There is a natural embedding $\Jac(C)-\delta_{n}(C_{2n})\hookrightarrow \bl_{n+1}(\Jac(C))$, whose complement has $n+1$ smooth components with normal crossings.
\end{itemize}

Finally, there is a natural identification
\[ \bl_j(C_{2j}\times \bP^{k-j})=\bl_j(C_{2j})\times \bP^{k-j},\]
which induces a Cartesian diagram for $j<k$
	\[\begin{tikzcd} \bl_j(C_{2j})\times \bP^{k-j}\arrow[d,hook,"\bl_j(\gamma_{j,k})"] \arrow[r,twoheadrightarrow,"p_1"] & \bl_j(C_{2j})\arrow[d,hook,"\bl_j(\delta_j)"] \\
	\bl_j(C_{2k}) \arrow[r, "\bl_j(\delta_k)"] &\bl_j(\Jac(C)) \end{tikzcd}\]

\end{prop}

Grant Proposition \ref{prop: chain of Abel Jacobi maps} for now, we can easily deduce the main theorem for hyperelliptic
curves of odd genus $g = 2n+1$.

\begin{proof}[Proof of Theorem \ref{theorem: log resolution of hyperelliptic theta divisors}]
	For the sake of clarity, let us denote by $\{\bl_i'(\Jac(C))\}_{i=1}^n$ the
	sequence of blowups described in the introduction, where at the $i$-th stage to get $
	\bl_i'(\Jac(C))$, we
	blow up the strict transform of the Brill-Noether variety $W_{g-1}^{n+1-i}(C)$. We
	are going to argue that, in fact, 
	\[\bl_i'(\Jac(C)) = \bl_i(\Jac(C)).\] 
	
First, since $g=2n+1$, the image of $\bl_{n}(C_{2n})$ in $\bl_{n}(\Jac(C))$ is a
divisor, and so $\bl_{n+1}(\Jac(C))=\bl_n(\Jac(C))$; therefore both sequences really
have only $n$ steps. Note that $\delta_{i}(C_{2i})=W^{n-i}_{g-1}(C)$, which is reduced by Proposition \ref{prop: reducedness of Wrd}.  Proposition \ref{prop: chain of Abel Jacobi maps} gives an embedding 
\[\bl_i(C_{2i})\hookrightarrow
\bl_i(\Jac(C)).\]
By induction on $i$, it follows
easily that $\bl_i'(\Jac(C))=\bl_{i}(\Jac(C))$, and that the proper transform of
$W^{n-i}_{g-1}(C)$ in $\bl_{i}(\Jac(C))$ is equal to the image of $\bl_{i}(C_{2i})\hookrightarrow \bl_{i}(\Jac(C))$,
hence smooth.

The conclusion is that $\bl_n'(\Jac(C)) = \bl_n(\Jac(C))$ is smooth, and that the
strict transform $\tilde{\Theta}$ is the image of
$\bl_{n}(C_{2n})\hookrightarrow\bl_{n}(\Jac(C))$, hence also smooth. Since
$\{\delta_i\}_{i=0}^n$ is an NCD chain, the pullback $\pi_n^{\ast}\Theta$ is a
divisor with simple normal crossings. The multiplicity of the exceptional divisor
$Z_i$ equals the multiplicity of $\Theta$ at a point in
$W^{n-i}_{g-1}-W^{n+1-i}_{g-1}$, which is $n+1-i$ by the Riemann Singularity Theorem. 
\end{proof}

\begin{remark}\label{remark: why Xij is not a fiber product}
We show $\{\delta_j\}_{j=0}^n$ is a proper chain using the diagram 
\[ \begin{tikzcd}
C_{2i} \times \bP^{j-i} \arrow[r,twoheadrightarrow,"p_1"] \arrow[d,"\gamma_{i,j}"] & C_{2i}\arrow[d,"\delta_{i}"]\\
C_{2j} \arrow[r,"\delta_j"] & \Jac(C).
\end{tikzcd}
\]
However, it is easy to check that this diagram is not Cartesian and we will prove later that it is only Cartesian over suitable open subsets (see Lemma \ref{lemma: intersection}(a)). On the other hand, Proposition \ref{prop: chain of Abel Jacobi maps} shows that it becomes Cartesian after sufficiently many blow-ups.

\end{remark}

\section{Secant bundles and maps between them}\label{sec:Secant bundles and maps between them}

The proofs of Propositions \ref{prop: chain of addition maps} and \ref{prop: chain of Abel Jacobi maps}  
 rely on the geometry of secant bundles over
symmetric products of curves. In this section, we review the necessary definitions
and results, following the notation in Betram's work \cite{Bertram92}. 

Let $C$ be a smooth projective curve of genus $g \geq 0$, let $M$ be a line bundle on
$C$, and let $j\geq 0$ be an integer. We denote by $C_j = \Sym^j C$ the $j$-th
symmetric product of the curve. Consider the following diagram:
\[ \begin{tikzcd}
\sD_{j+1} \arrow[r,hook] & C\times C_{j+1} \arrow[dl,"p_1"] \arrow[dr,"p_2"]\\
C &  & C_{j+1}
\end{tikzcd}
\]
Here $\sD_{j+1}=C\times C_{j}$ is the universal divisor of degree $j+1$ over $C_{j+1}$, embedded via $(p,D)\mapsto (p,p+D)$. We say that $M$ separates $d$ points if
\[ h^0(C,M)=h^0(C,M(-D))+d, \quad \forall D\in C_{d}.\]
If $M$ separates $j+1$ points, then the sequence of sheaves 
\begin{equation}\label{eqn: ses for secant bundles}
0 \to p_1^{\ast}M\otimes \cO(-\sD_{j+1}) \to p_1^{\ast}M \to p_1^{\ast}M\otimes \cO_{\sD_{j+1}} \to 0
\end{equation}
on $C \times C_{j+1}$ remains exact when pushed down to $C_{j+1}$.
\begin{definition}\label{definition: secant bundles}
The secant bundle of $j$-planes over $C_{j+1}$, with respect to $M$, is 
\[ B^{j}(M)\colonequals \bP (p_2)_{\ast}(p_1^{\ast}M\otimes \cO_{\sD_{j+1}}).\]
\end{definition}
This is a $\bP^j$-bundle over the symmetric product $C_{j+1}$; for $j=0$, we have
$B^0(M) = C$. If $M$ separates $j+1$ points, the natural map to $\bP H^0(C,M)$ is
\begin{equation}\label{eqn: beta}
	\beta_j: B^j(M) \to \bP (p_2)_{\ast}(p_1^{\ast}M)=\bP H^0(C,M)\times C_{j+1} \to \bP H^0(C,M),
\end{equation}
where the last map is the projection to $\bP H^0(C,M)$. 

Assuming that $M$ separates $m+1$ points, we get a proper chain
\begin{equation}\label{eqn: the chain of secant varieties}
	\bigl\{\beta_j: B^j(M) \to \bP H^0(C,M)\bigr\}_{j=0}^{m},
\end{equation}
using the following diagram
\[ \begin{tikzcd}
B^i(M)\times C_{j-i} \arrow[r,twoheadrightarrow,"p_1"] \arrow[d,"\alpha_{i,j}"] &B^i(M)\arrow[d,"\beta_i"]\\
B^j(M)  \arrow[r,"\beta_j"] & \bP H^0(C,M)
\end{tikzcd}\]
Here $p_1$ is again the projection to the first coordinate. For $i<j$, the map 
\begin{equation}\label{eqn: alpha_m,k}
    \alpha_{i,j}:B^i(M)\times C_{j-i} \to B^j(M)
\end{equation}
is induced by the addition map $r:C_{i+1}\times C_{j-i} \to C_{j+1}$ (see the second definition in \cite[Page 432]{Bertram92}). This chain is proper because the image $\beta_j( B^j(M))$ is exactly the usual secant variety $\mathrm{Sec}^j(C)$ of $j$-planes through $j+1$ points of $C$ inside $\bP H^0(C,M)$ \cite[Page 432]{Bertram92}.

In order to study this chain, Betram introduced certain auxiliary chains, just as in the
previous section. Fix $k<m$, one can show that the chain 
\[\{\alpha_{i,k}:B^{i}(M)\times C_{k-i}\to B^k(M)\}_{i=0}^{k-1}\] is a proper chain using the diagram
\[ 
  \begin{tikzcd}
    B^i(M)\times C_{j-i}\times C_{k-j} \arrow
	 [r,twoheadrightarrow,"\mathrm{id}\times r"] \arrow[d,"\alpha_{i,j}\times \id"] &B^i(M)\times C_{k-i} \arrow[d,"\alpha_{i,k}"]\\
    B^j(M)\times C_{k-j} \arrow[r,"\alpha_{j,k}"] & B^k(M) 
    \end{tikzcd}
\]
where $r:C_{j-i}\times C_{k-j}\to C_{k-i}$ is the addition map on symmetric products. The chain $\{\alpha_{i,k}\}_{i=0}^{k-1}$ is proper because the image $\alpha_{i,k}(B^i(M)\times C_{k-i})$ can be thought as the relative secant variety of $i$-planes in $B^k(M)$, see \cite[Page 433]{Bertram92}.

Lastly, using Notation \ref{notation: chain product with a
constant factor}, we have for each pair $(j,k)$ with $j < k$ a proper chain
\[    
	\bigl\{\alpha_{i,j} \times \id:(B^i(M)\times C_{j-i})\times C_{k-j}\to
	B^j(M)\times C_{k-j}\bigr\}_{i=0}^{j-1}.
\]
In \cite[Proposition 2.2, Proposition 2.3]{Bertram92}, Bertram proved the following
result.
\begin{prop}[Bertram]\label{prop: injective maps of chains for secant bundles}
Let $M$ be a line bundle on $C$. 
\begin{enumerate}
	\item [(a)] If $M$ separates $m+1$ points, then for $j<k<m$, both $\{\alpha_{i,k}\}_{i=0}^{k-1}$ and $\{\alpha_{i,j} \times
		\id\}_{i=0}^{j-1}$ are chains of smooth centers, and the map 
		\[
			\alpha_{j,k}: B^j(M)\times C_{k-j}\to B^k(M)
		\]
		is an injective map of chains from $\{\alpha_{i,j} \times \id\}_{i=0}^{j-1}$ to  $\{\alpha_{i,k}\}_{i=0}^{j-1}$.
	\item [(b)] If $M$ separates $2k+2$ points, then $\{\beta_j\}_{j=0}^k$ is a chain
		of smooth centers, and 
		\[
			\beta_k:B^k(M) \to \bP H^0(C,M)
		\]
		is an injective map of chains from $\{\alpha_{j,k}\}_{j=0}^{k-1}$ to $\{\beta_j\}_{j=0}^{k-1}$.
\end{enumerate}
\end{prop}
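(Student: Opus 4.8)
The proposition is due to Bertram \cite[Proposition 2.2, Proposition 2.3]{Bertram92}; my plan is to obtain it within the framework of \S\ref{sec: Bertram's blow up construction}, proving both parts together by induction on $k$ and feeding each stage into the inductive criterion of Lemma \ref{lemma: injective map imply smooth chain and normal crossing}. The base case is elementary: since $M$ separates two points it is very ample, so $\beta_0 \colon B^0(M) = C \hookrightarrow \bP H^0(C,M)$ is a closed embedding, and the bottom map of each $\alpha$-chain is an embedding for the same reason. These provide the initial centers of the two chains.

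For part (a), I would fix $j < k$ and observe first that, by Lemma \ref{lemma: chains with a constant factor}, the chain $\{\alpha_{i,j}\times\id\}_{i=0}^{j-1}$ is an NCD chain as soon as $\{\alpha_{i,j}\}_{i=0}^{j-1}$ is, which holds by the inductive hypothesis. To show that $\alpha_{j,k}$ is a map of chains of centers, I would verify at each stage that the scheme-theoretic preimage $\bl_i(\alpha_{j,k})^{-1}(\bl_i(\text{center}))$ is the expected strict transform. Via Lemma \ref{lemma: criterion for checking scheme-theoretic pull back} this splits into a set-theoretic identity of preimages of the relative secant loci, a transversality statement against the exceptional divisors, and the matching of their open parts; the set-theoretic input is the geometric fact that a point of $B^k(M)$ lies in the relative $i$-secant locus precisely when the underlying degree-$(k+1)$ divisor carries a subdivisor spanning an $i$-plane, a condition that $\alpha_{j,k}$ both preserves and reflects. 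Injectivity of the fully blown-up map then follows from the relative embedding criterion of Lemma \ref{lemma: embedding for multiple blow ups}, whose hypotheses are exactly conditions on the exceptional divisors and on the complement of the centers.

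Part (b) runs the same machine with target $\bP H^0(C,M)$ and centers $\Sec^{j}(C)$, but here the stronger hypothesis that $M$ separates $2k+2 = 2(k+1)$ points is decisive. It is precisely this separation that forces $\beta_k$ to be injective with injective differential away from the lower secant loci — two distinct divisors of degree $k+1$ cannot span the same secant plane once their sum of degree $2k+2$ imposes independent conditions — giving hypothesis (c) of Lemma \ref{lemma: embedding for multiple blow ups}; it also guarantees that the open exceptional strata have the expected dimension. Hypothesis (b) I would check by a conormal bundle computation identifying the restriction of $\beta_k$ to each $E_i^{\circ}$ with a secant bundle map for the residual linear series $\abs{M(-D)}$, which is an embedding by the induction once one confirms that $M(-D)$ still separates enough points.

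The step I expect to be the main obstacle is the injectivity of the final blown-up map, equivalently the surjectivity of the conormal differential over each open exceptional stratum. This is where one must identify the projectivized conormal bundle of a secant locus with another secant bundle and use the separation hypothesis to rule out unexpected coincidences of secant planes; keeping track of which residual bundle $M(-D)$ appears at each stage — and verifying that it separates enough points to sustain the induction — is the delicate bookkeeping at the heart of the argument.
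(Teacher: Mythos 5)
The first thing to say is that the paper contains no proof of this proposition: it is imported verbatim from Bertram \cite[Propositions 2.2 and 2.3]{Bertram92}, and the lemmas of \S\ref{sec: Bertram's blow up construction} that you invoke (Lemma \ref{lemma: criterion for checking scheme-theoretic pull back}, Lemma \ref{lemma: embedding for multiple blow ups}, Lemma \ref{lemma: injective map imply smooth chain and normal crossing}) were themselves extracted by the authors \emph{from} Bertram's proof of this very result. So your plan --- reconstructing that proof by a double induction fed through those lemmas --- is the right skeleton, your base case is fine, and your set-theoretic injectivity argument in part (b) (distinct divisors $D_1 \neq D_2$ of degree $k+1$ have spans meeting only in $\langle D_1 \wedge D_2\rangle$ once $D_1 + D_2$ imposes independent conditions) is correct.

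There is, however, a genuine error at exactly the step you call the heart of the argument: the residual linear series governing the exceptional strata is $\abs{M(-2D)}$, not $\abs{M(-D)}$. For secant varieties, the tangent space to $\Sec^k(C) \subseteq \bP H^0(C,M)$ at a general point of the span $\langle D\rangle$, $\deg D = i+1$, is the span $\langle 2D\rangle$ (Terracini: it is spanned by the tangent lines at the points of $D$), so the conormal space is $H^0(C, M(-2D))$ and the exceptional divisor over the $i$-th stratum fibers into secant bundles $B^{k-i-1}\bigl(M(-2D)\bigr)$. A dimension count already rules out your version: blowing up $\bP H^0(M)$ along $C$ gives exceptional fibers $\bP^{h^0(M)-3} = \bP H^0(M(-2p))$, whereas $\bP H^0(M(-p)) \cong \bP^{h^0(M)-2}$. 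Moreover, the hypothesis that $M$ separates $2k+2$ points is calibrated precisely to the doubled divisor: when $\deg D = i+1$, the bundle $M(-2D)$ separates $2k+2-2(i+1) = 2(k-i-1)+2$ points, which is exactly what the inductive application of part (b) to the residual chain $\{\beta_\ell\}_{\ell=0}^{k-i-1}$ demands; with $M(-D)$ the identification of the exceptional fibers fails and the induction does not close. The paper itself flags this trap: the remark after Lemma \ref{lemma: conormal of addition maps} emphasizes that in the hyperelliptic-Jacobian setting the relevant divisor is $h_{\ast}D$, \emph{unlike} Bertram's secant-variety setting where it is the double $2h_{\ast}D$. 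You have carried the un-doubled convention of the paper's own computations into the one setting where the doubling is essential.
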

\begin{remark}
Bertram's proof actually shows that $\{\alpha_{j,k}\}_{j=0}^{k-1}$ and $\{\beta_{j}\}_{j=0}^{k}$ are NCD chains. But these facts will not be used later.
\end{remark}

Let us spell out in detail what Bertram's theorem says in the case of $\bP^1$, where
the images of the secant bundles for $\cO_{\bP^1}(d)$ are the secant
varieties to the rational normal curve of degree $d$ in $\bP^d$.

\begin{corollary}
	\label{cor: injective maps of chains for secant bundles over rational curves}    
	Let $d \geq 2k+1$ and consider the line bundle $M=\cO_{\bP^1}(d)$ on $\bP^1$. Then $\{\beta_j\}_{j=0}^k$ is a chain of smooth centers. Fix $0 \leq i < j < k$, then 
\begin{enumerate}
   \item [(a)] The diagram
	\[ \begin{tikzcd}
		\bl_i \bigl( B^i(M)\times \bP^{j-i} \bigr) \times \bP^{k-j}
		\dar[hook]{\bl_i(\alpha_{i,j}) \times \id} \arrow[r,twoheadrightarrow] & 
		\bl_i \bigl( B^i(M) \times \bP^{k-i} \bigr)  \dar[hook]{\bl_i(\alpha_{i,k})} \\
			 \bl_i B^j(M) \times \bP^{k-j}  \rar{\bl_i(\alpha_{j,k})} & \bl_i B^k(M)
 	\end{tikzcd} \]
	is Cartesian and the two vertical arrows are embeddings. In particular, $\alpha_{j,k}$ is a map of chains of centers.
   \item [(b)] The diagram
	\[ \begin{tikzcd}
			\bl_i \bigl( B^i(M) \times \bP^{j-i} \bigr) 
			\dar[hook]{\bl_i(\alpha_{i,j})}  \arrow[r,twoheadrightarrow] & 
		\bl_i B^i(M) \dar[hook]{\bl_i(\beta_i)} \\
		\bl_i B^j(M) \rar{\bl_i(\beta_j)} & \bl_i \bP^d  	\end{tikzcd} \]
	is Cartesian and the two vertical arrows are embeddings. In particular, $\beta_j$ is a map of chains of centers.
	\item [(c)] There is a natural isomorphism
	\[ \bl_i(B^i(M)\times \bP^{j-i})\cong \bl_i B^i(M)\times \bP^{j-i}.\]
\end{enumerate}
\end{corollary}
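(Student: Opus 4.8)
The plan is to deduce the corollary from Bertram's Proposition~\ref{prop: injective maps of chains for secant bundles} by specializing to $C=\bP^1$ and $M=\cO_{\bP^1}(d)$. Two elementary observations set this up. First, since $h^0(\bP^1,\cO(d))=d+1$ and $h^0(\bP^1,\cO(d-s))=d-s+1$ for every $d\geq s-1$ (all degree-$s$ divisors on $\bP^1$ give the same line bundle $\cO(d-s)$), the bundle $\cO_{\bP^1}(d)$ separates $s$ points exactly when $d\geq s-1$; thus the hypothesis $d\geq 2k+1$ is precisely the statement that $M$ separates $2k+2$ points. Second, the symmetric product $\Sym^{\ell}\bP^1$ is canonically $\bP^{\ell}=\bP H^0(\bP^1,\cO(\ell))$ (a divisor corresponds to its defining binary form up to scale), and under this identification the addition maps $C_a\times C_b\to C_{a+b}$ become the multiplication maps $\bP^a\times\bP^b\to\bP^{a+b}$ of binary forms. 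This is what replaces every factor $C_{j-i}$, $C_{k-j}$, $C_{k-i}$ in the defining diagrams of the chains $\{\beta_j\}$ and $\{\alpha_{i,k}\}$ by the projective spaces appearing in the statement, and identifies $\bP H^0(\bP^1,\cO(d))$ with $\bP^d$.

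With these in hand I would invoke Proposition~\ref{prop: injective maps of chains for secant bundles} directly. Part~(b), applied at the index $k$, gives that $\{\beta_j\}_{j=0}^k$ is a chain of smooth centers; applied at an index $j<k$ (legitimate since $d\geq 2k+1$ forces $M$ to separate $2j+2$ points) it gives that $\beta_j$ is an injective map of chains from $\{\alpha_{i,j}\}_{i=0}^{j-1}$ to $\{\beta_i\}_{i=0}^{j-1}$. For the $\alpha$'s I would use part~(a) with the bound $m=2k+1$: because $M$ separates $m+1=2k+2$ points and $j<k<m$, the chains $\{\alpha_{i,k}\}_{i=0}^{k-1}$ and $\{\alpha_{i,j}\times\id\}_{i=0}^{j-1}$ are chains of smooth centers and $\alpha_{j,k}$ is an injective map of chains. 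For every relevant index this supplies exactly the two pieces of data needed below: the blow-up maps $\bl_i(\beta_i)$, $\bl_i(\alpha_{i,j})$, $\bl_i(\alpha_{i,k})$ are closed embeddings, and the map-of-chains identities (Definition~\ref{definition: injective chain}) $\bl_i(\beta_j)^{-1}\bigl(\bl_i B^i(M)\bigr)=\bl_i\bigl(B^i(M)\times\bP^{j-i}\bigr)$ and $\bl_i(\alpha_{j,k})^{-1}\bigl(\bl_i(B^i(M)\times\bP^{k-i})\bigr)=\bl_i\bigl((B^i(M)\times\bP^{j-i})\times\bP^{k-j}\bigr)$ hold.

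It remains to repackage this as assertions (a), (b), (c). The vertical arrows of the two squares are exactly the embeddings $\bl_i(\alpha_{i,k})$, $\bl_i(\beta_i)$, $\bl_i(\alpha_{i,j})$ (times an identity where relevant), so they are closed embeddings by the chain-of-smooth-centers conclusion. The Cartesian property in each case is precisely the corresponding map-of-chains identity: since the right-hand vertical arrow is a closed embedding, the fibre product over $\bl_i B^k(M)$ (resp.\ $\bl_i\bP^d$) is by definition the scheme-theoretic preimage of the center, which the identity above identifies with the top-left corner; Lemma~\ref{lemma: functoriality of blow ups} is what controls these preimages through the intervening blow-ups. Assertion~(c) is immediate from Lemma~\ref{lemma: chains with a constant factor} applied to $\{\alpha_{h,i}\}_{h=0}^{i-1}$ with constant factor $\bP^{j-i}$, and I would use the same lemma to rewrite the top-left corner of the square in (a) as $\bl_i B^i(M)\times\bP^{j-i}\times\bP^{k-j}$. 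No step here is geometrically deep — all the real content is Bertram's — so the main thing to watch is the bookkeeping: verifying that the single bound $d\geq 2k+1$ simultaneously satisfies Bertram's hypotheses for every sub-chain in play, and checking that the identification $\Sym^{\ell}\bP^1\cong\bP^{\ell}$ is genuinely compatible with the secant-bundle maps $\beta_j$ and $\alpha_{i,j}$, so that the specialized diagrams are literally the ones written in the statement.
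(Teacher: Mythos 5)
Your proposal is correct and takes essentially the same route as the paper: the paper's proof likewise notes that $M=\cO_{\bP^1}(d)$ separates $d+1\geq 2k+2$ points, invokes Proposition \ref{prop: injective maps of chains for secant bundles} under the identifications $\Sym^{\ell}\bP^1\cong\bP^{\ell}$ and $\bP H^0(\bP^1,M)\cong\bP^d$, and deduces (c) from Lemma \ref{lemma: chains with a constant factor}. Your extra bookkeeping --- translating the map-of-chains identities of Definition \ref{definition: injective chain} into the Cartesian squares via the fact that scheme-theoretic preimage along a closed embedding is the fibre product --- is precisely the implicit content of the paper's two-line argument.
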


\begin{proof}
Since $M=\cO_{\bP^1}(d)$ separates $d+1$ points on $\bP^1$, we can apply Proposition
\ref{prop: injective maps of chains for secant bundles} and use the isomorphisms
$\bP^k \cong \Sym^k\bP^1$ and $\bP^d \cong \bP H^0(\bP^1, M)$. 
The last statement follows from Lemma \ref{lemma: chains with a constant factor}.
\end{proof}

\section{Properties of Abel-Jacobi maps and addition maps}\label{sec: calculation of conormal bundles}
In this section, let $C$ be a hyperelliptic curve of genus $g=2n+1$. As a preparation for the proofs of Proposition \ref{prop: chain of
addition maps} and \ref{prop: chain of Abel Jacobi maps}, we establish some basic properties of the map $\gamma_{i,j}:C_{2i}\times \bP^{j-i} \to C_{2j}$ from \eqref{eqn: the chain of maps to symmetric products}
and of the Abel-Jacobi map $\delta_j: C_{2j} \to \Jac(C)$. In particular, their
conormal bundles are calculated in terms of the secant bundles over symmetric
products of $\bP^1$. In fact, it is known that the conormal bundle of the Abel-Jacobi
map of any curve can be described in terms of Steiner bundles (see \cite[Theorem 1.1]{Ein91}). For our purpose, it is more natural to use secant bundles.

We start by collecting some basic facts about hyperelliptic curves. From \cite[Page 13]{ACGH}, we know for each divisor $D$ with $h^0(\cO_C(D))=r+1$ and degree $d\leq g(C)$, there is a unique decomposition
\begin{equation*}
    D=E+\sum_{\ell=1}^{r}(p_\ell+q_\ell),
\end{equation*}
such that $p_\ell+q_\ell$ are hyperelliptic pairs and $E$ is a degree $d-2r$ divisor with $h^0(\cO_C(E))=1$. Similarly, for any
$L\in \Jac(C)$ with $h^0(L)=r+1$, there is a unique decomposition
\begin{equation}\label{eqn: decomposition of line bundles}
    L=rg^1_2\otimes L'
\end{equation}
such that $h^0(L')=1$.
\begin{notation}\label{notation: open part of symmetric product U2j}
	For each $j$, we define $U_{2j} \colonequals C_{2j}-\gamma_{j-1,j}(C_{2j-2}\times \bP^1)$. In other words, $U_{2j}$ consists of divisors $D$ where none of the degree 2 subdivisors of $D$ form a hyperelliptic pair and $h^0(\cO_C(D))=1$.
\end{notation}
\begin{notation}\label{notation: associated divisors on P1}
By definition, any $D\in U_{2j}$ gives a degree $2j$
divisor on $\bP^1$ by pushforward along the hyperelliptic map $h : C\to \bP^1$. We denote this divisor on
$\bP^1$ by the symbol $h_{\ast} D$ and define
\[\cO_{\bP^1}(g-1-h_{\ast} D)\colonequals \cO_{\bP^1}(g-1)\otimes
\cO_{\bP^1}(-h_{\ast} D),\]
which is a line bundle of degree $g-1-2j$.
\end{notation} 

Since $h:C\to \bP^1$ has $g+1$ branch points, one has
$h_{\ast}\cO_C \cong \cO_{\bP^1}(-1-g)\oplus \cO_{\bP^1}$. As we have $\omega_C \cong
h^{\ast} \cO_{\bP^1}(g-1)$, the projection formula gives
\[
	h_{\ast} \omega_C \cong \omega_{\bP^1} \oplus \cO_{\bP^1}(g-1).
\]
Consider the short exact sequence
$0 \to \cO_C(-D) \to
\cO_C \to \cO_D \to 0$ for $D\in U_{2j}$. Because $h$ is finite, pushing this forward along $h$ gives
\[ 0 \to h_{\ast}\cO_C(-D) \to
h_{\ast}\cO_C \to h_{\ast}\cO_D \to 0.\]
Since $h$ is an isomorphism over $D$, we have $h_{\ast}\cO_D=\cO_{h_{\ast}D}$.
Moreover, as $h_{\ast}D$ is supported away from the branch points of $h$, the map $h_{\ast}\cO_C\to h_{\ast}\cO_D$ is the composition of
\[ h_{\ast}\cO_C\to \cO_{\bP^1}\to \cO_{h_{\ast}D}=h_{\ast}\cO_D,\]
where the first map is the projection induced by $h_{\ast}\cO_C=\cO_{\bP^1}(-1-g)\oplus \cO_{\bP^1}$.  It follows that $h_{\ast}\cO_C(-D)=\cO_{\bP^1}(-1-g)\oplus \cO_{\bP^1}(-h_{\ast}D)$. Consequently, using projection formula again gives
\begin{equation} \label{eqn: canonical bundle and xi_D}
	h_{\ast} \omega_C(-D) \cong \omega_{\bP^1} \oplus \cO_{\bP^1}(g-1-h_{\ast} D).
\end{equation}

In the following statement, see \eqref{eqn: notation of conormal bundle} for the notation of conormal bundle $N^{\ast}_f$ or $N^{\ast}_{X|Y}$.
\begin{lemma}\label{lemma: conormal of addition maps}
For $0\leq i<j\leq n$ and $\gamma_{i,j}$ from \eqref{eqn: the chain of maps to symmetric products}, we have
\begin{enumerate}
	\item [(a)] $d\gamma_{i,j}: \gamma_{i,j}^{\ast} \ctbl{C_{2j}} \to
		\ctbl{C_{2i}\times \bP^{j-i}}$ is surjective when restricted to $U_{2i}\times \bP^{j-i}$. 
    \item [(b)] For $i=0$, we have an isomorphism
    \[ \bP N^{\ast}_{\bP^j|C_{2j}} \cong B^{j-1}(\cO_{\bP^1}(g-1)),\]
    the latter is the secant bundle over $\Sym^j\bP^1 = \bP^j$ with respect to $\cO_{\bP^1}(g-1)$.
    \item [(c)] For $i\geq 1$, the space $\bP N^{\ast}_{\gamma_{i,j}} \big\vert_{U_{2i}\times \bP^{j-i}}$ is smooth over $U_{2i}$, such that over $D\in U_{2i}$ we have an isomorphism:
		 \begin{equation}\label{eqn: identification between conormal and secant bundle}
		 \bP N^{\ast}_{\gamma_{i,j}} \big\vert_{\{D\}\times \bP^{j-i}} \cong
	 B^{j-i-1}(\cO_{\bP^1}(g-1-h_{\ast} D)), \end{equation}
    the secant bundle over $\bP^{j-i}$ with respect to
	 $\cO_{\bP^1}(g-1-h_{\ast} D)$. 
\end{enumerate}    
\end{lemma}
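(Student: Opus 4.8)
The plan is to compute all three parts from a single source: the standard identification of the cotangent space of a symmetric product, $\ctbl{C_{2j}}\big\vert_{[D]}\cong H^0(C,\omega_C\otimes\cO_D)$, together with the pushforward formula $h_{\ast}\omega_C(-D)\cong\omega_{\bP^1}\oplus\cO_{\bP^1}(g-1-h_{\ast}D)$ from \eqref{eqn: canonical bundle and xi_D}, which converts every cotangent computation into cohomology of line bundles on $\bP^1$. Throughout I write $D=E+F$ for a point $(E,F)\in U_{2i}\times\bP^{j-i}$, where $E\in U_{2i}$ maps isomorphically onto $E'=h_{\ast}E$ of degree $2i$ and $F=h^{\ast}F'$ for a degree-$(j-i)$ divisor $F'$ on $\bP^1$, so that $\bP^{j-i}=|(j-i)g^1_2|=\Sym^{j-i}\bP^1$.

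For part (a), I would factor $\gamma_{i,j}$ as the inclusion $\bP^{j-i}\hookrightarrow C_{2(j-i)}$ followed by the addition map, so that $d\gamma_{i,j}$ on cotangent spaces becomes the map $H^0(\omega_C\otimes\cO_D)\to H^0(\omega_C\otimes\cO_E)\oplus\ctbl{\bP^{j-i}}\big\vert_{F}$ sending $\sigma\mapsto(\sigma|_E,[\sigma|_F])$. The restriction $\sigma\mapsto\sigma|_E$ is visibly surjective (it is $H^0$ of the surjection $\cO_D\twoheadrightarrow\cO_E$ of finite-length sheaves), so the content is that the subspace of forms vanishing on $E$ surjects onto $\ctbl{\bP^{j-i}}\big\vert_{F}$. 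Pushing forward along $h$, this is the surjectivity of a restriction map for $\cO_{\bP^1}(g-1-E')$ onto $F'$, which holds because $H^1(\bP^1,\cO_{\bP^1}(g-1-E'-F'))=0$ in the range $j\le n$ (the relevant degree on $\bP^1$ is $2n-i-j\ge -1$). This proves surjectivity of $d\gamma_{i,j}$ on $U_{2i}\times\bP^{j-i}$ and shows that $N^{\ast}_{\gamma_{i,j}}$ is there a vector bundle of rank $j-i$.

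The heart of the argument is the fiberwise identification of the conormal bundle, which settles (b) and (c) simultaneously. The condition $[\sigma|_F]=0$ means exactly that $\sigma|_F$ lies in the image of $H^0(\omega_C)\to H^0(\omega_C\otimes\cO_F)$ (the conormal to the linear system $\bP^{j-i}$, by Serre duality on $C$); together with $\sigma|_E=0$ this identifies
\[ N^{\ast}_{\gamma_{i,j}}\big\vert_{(E,F)}=\mathrm{image}\bigl(H^0(\omega_C(-E))\to H^0(\omega_C\otimes\cO_D)\bigr)\cong H^0(\omega_C(-E))/H^0(\omega_C(-E-F)). \]
The inclusion "$\supseteq$" is immediate, since a global form vanishing on $E$ restricts to $0$ on $E$ and to a global form on $F$, so it satisfies both conditions cutting out $\Ker(d\gamma_{i,j})$. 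A Riemann--Roch computation shows the right-hand space has dimension $(g-2i)-(g-i-j)=j-i$, matching the rank from part (a), so the inclusion is an equality. Pushing forward along $h$ using \eqref{eqn: canonical bundle and xi_D} and the vanishing $H^1(\cO_{\bP^1}(g-1-E'-F'))=0$, this becomes
\[ N^{\ast}_{\gamma_{i,j}}\big\vert_{(E,F)}\cong H^0\bigl(\cO_{\bP^1}(g-1-h_{\ast}E)\otimes\cO_{F'}\bigr), \]
which is precisely the fiber over $[F']$ of $B^{j-i-1}(\cO_{\bP^1}(g-1-h_{\ast}E))$. Taking $i=0$ gives (b) with $h_{\ast}E=0$, and letting $F'$ vary over $\Sym^{j-i}\bP^1$ gives the fiberwise statement of (c).

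Finally, to promote this to an isomorphism of $\bP^{j-i-1}$-bundles and to obtain smoothness over $U_{2i}$, I would run the same computation in families: the conormal sequence, the ideal-sheaf identification $\cI_{E,E+F}\cong\cO_F(-E)$, and the pushforward along $h$ are all compatible with base change, and since the relevant higher cohomology on $\bP^1$ vanishes, cohomology-and-base-change shows the pushforward sheaves are locally free and the natural comparison map is a fiberwise isomorphism, hence an isomorphism. Smoothness of $\bP N^{\ast}_{\gamma_{i,j}}\big\vert_{U_{2i}\times\bP^{j-i}}$ over $U_{2i}$ is then automatic, as it is a projective bundle over the smooth variety $U_{2i}\times\bP^{j-i}$. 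I expect the main obstacle to be the scheme-theoretic bookkeeping at points where $E$ and $F$ meet on $C$: this happens generically, since $E$ and $F$ overlap exactly over the common points of $E'$ and $F'$ on $\bP^1$, so the naive "disjoint divisors" computation produces the wrong bundle (with no twist). Isolating the identification $\cI_{E,E+F}\cong\cO_F(-E)$ and passing to $\bP^1$ is what makes the twist $\cO_{\bP^1}(g-1-h_{\ast}E)$ emerge correctly and keeps the dimension count honest.
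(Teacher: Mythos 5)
Your proposal is correct, and at its core it is the same computation as the paper's: identify $\ctbl{C_{2j}}\big\vert_{E+F}$ with $H^0(C,\omega_C\otimes\cO_{E+F})$, push forward along $h$ via \eqref{eqn: canonical bundle and xi_D}, and recognize the conormal fiber $H^0(\bP^1,\cO_{\bP^1}(g-1-h_{\ast}E)\otimes\cO_{F'})$ as a fiber of a secant bundle. Where you genuinely diverge is in how the kernel of $d\gamma_{i,j}$ is pinned down. The paper first establishes (by a diagram chase resting on $H^0(\cO_C(E+F))=H^0(\cO_C(F))$ for $E\in U_{2i}$) the short exact sequence
\[ 0 \to H^0(C,\omega_C(-E)\otimes\cO_F) \to \ctbl{C_{2j}}\big\vert_{E+F} \to \ctbl{C_{2i}}\big\vert_E \to 0, \]
and then splits the kernel after pushforward as $\ctbl{\bP^{j-i}}\big\vert_{F'}\oplus H^0(\cO_{\bP^1}(g-1-h_{\ast}E)\otimes\cO_{F'})$, so that surjectivity in (a) and the identifications in (b), (c) drop out simultaneously from the single observation that $d\gamma_{i,j}$ becomes projection onto the first summand. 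You instead characterize $N^{\ast}_{\bP^{j-i}|C_{2(j-i)}}\big\vert_{F}$ as the image of $H^0(\omega_C)\to H^0(\omega_C\otimes\cO_F)$, deduce the inclusion of the image of $H^0(\omega_C(-E))$ into $\Ker(d\gamma_{i,j})$, and force equality by Riemann--Roch against the rank supplied by (a). This works, and your answer agrees with the paper's; but note the logical ordering it imposes: your (c) consumes the rank statement from (a), so (a) must be proved first and independently, which you do.

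One misattribution to fix in your part (a): after pushing forward, the map from the space of forms vanishing on $E$, namely $H^0(\omega_C(-E)\otimes\cO_F)\cong H^0(\omega_{\bP^1}\otimes\cO_{F'})\oplus H^0(\cO_{\bP^1}(g-1-E')\otimes\cO_{F'})$, onto $\ctbl{\bP^{j-i}}\big\vert_{F'}=H^0(\omega_{\bP^1}\otimes\cO_{F'})$ is projection onto the first summand, so its surjectivity is automatic and is \emph{not} the surjectivity of a restriction map for $\cO_{\bP^1}(g-1-E')$. The vanishing $H^1(\bP^1,\cO_{\bP^1}(g-1-E'-F'))=0$ that you invoke there (true: the degree is $2n-i-j\geq 1$) is instead exactly what your part (c) needs, namely to identify the quotient $H^0(\cO_{\bP^1}(g-1-E'))/H^0(\cO_{\bP^1}(g-1-E'-F'))$ with $H^0(\cO_{\bP^1}(g-1-E')\otimes\cO_{F'})$ --- equivalently, to see that your image-of-global-forms description fills up the whole second summand, rather than a proper subspace of it. With the vanishing relocated to that step, your argument is complete, and your closing remarks on base change and on the ideal-sheaf identification $\cI_{E,E+F}\cong\cO_F(-E)$ handle the family version and the overlapping-divisor bookkeeping correctly.
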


\begin{proof}
As a warm-up, let us calculate the conormal bundle of $\bP^j$ inside $C_{2j}$.
Recall that for any divisor $D\in C_{2j}$, there is a canonical identification \cite[Page 160]{ACGH}
\[ \ctbl{C_{2j}} \big\vert_D \cong H^0(C, \omega_C \otimes \cO_D).\]
Under the isomorphism $\bP^j \cong \Sym^j \bP^1$, the morphism $\bP^j \to C_{2j}$
associates an effective divisor $E$ of degree $j$ on $\bP^1$ to an effective divisor
$h^{\ast} E$ of degree $2j$ on $C$ and we have also
\[
	\ctbl{\bP^j} \big\vert_E \cong H^0(\bP^1, \omega_{\bP^1} \otimes \cO_E).
\]
 Moreover,  $\ctbl{C_{2j}} \big\vert_{h^{\ast}E}$ and $\ctbl{\bP^j} \big\vert_E$ can be related in the following way. One has
\begin{align*}
	\ctbl{C_{2j}} \big\vert_{h^{\ast} E} &\cong H^0(C, \omega_C \otimes h^{\ast} \cO_E)\\
	&\cong H^0(\bP^1, h_{\ast} \omega_C \otimes \cO_E) \\
		&\cong H^0(\bP^1, \omega_{\bP^1} \otimes \cO_E) \oplus
	H^0(\bP^1, \cO_{\bP^1}(g-1) \otimes \cO_E)\\
	&=\ctbl{\bP^j} \big\vert_E \oplus
	H^0(\bP^1, \cO_{\bP^1}(g-1) \otimes \cO_E).
\end{align*}
It follows that the map $\ctbl{C_{2j}} \big\vert_{h^{\ast} E} \to \ctbl{\bP^j} \big\vert_E$ induced by $\bP^j\hookrightarrow C_{2j}$ is surjective with
kernel
\[
	N_{\bP^j|C_{2j}}^{\ast} \big\vert_E \cong H^0(\bP^1, \cO_{\bP^1}(g-1) \otimes
	\cO_E).
\]
Varying this isomorphism over $E\in \bP^j$ gives
\[
	N_{\bP^j|C_{2j}}^{\ast} \cong (p_2)_{\ast} \bigl( p_1^{\ast} \cO_{\bP^1}(g-1) \otimes
	\cO_{\sE_j} \bigr),
\]
where $\sE_j$ denotes the universal divisor over $\bP^j=\Sym^j\bP^1$ as follows
\begin{equation}\label{eqn: diagram for symmetric product over P1}
    \begin{tikzcd}
\sE_{j} \arrow[r,hook] & \bP^1\times \Sym^j\bP^1 \arrow[dl,"p_1"] \arrow[dr,"p_2"]\\
\bP^1 &  & \Sym^j\bP^1
\end{tikzcd}
\end{equation} 
In particular, the right hand side is the secant bundle
$B^{j-1} \bigl( \cO_{\bP^1}(g-1) \bigr)$ and it proves (b).

For (a) and (c), consider $D \in U_{2i}$ and $E \in \bP^{j-i}$. The morphism
$\gamma_{i,j} : U_{2i} \times \bP^{j-i} \to C_{2j}$ takes the pair $(D,E)$ to the
divisor $D + h^{\ast} E$ of degree $2j$ on $C$.  Because $D \in U_{2i}$, we have $H^0(C, \cO_C(D+h^{\ast} E)) = H^0(C, \cO_C(h^{\ast}
E))$ by \eqref{eqn: decomposition of line bundles}. After a little bit of diagram chasing, this gives us a diagram involving a short exact sequence
\[ \begin{tikzcd}
	0 \arrow[r] &H^0(C, \omega_C(-D) \otimes \cO_{h^{\ast} E}) \arrow[r] &
	H^0(C, \omega_C \otimes \cO_{D + h^{\ast} E}) \arrow[d,equal] \arrow[r]&
	H^0(C, \omega_C \otimes \cO_D) \arrow[r] \arrow[d,equal]& 0\\
 {} & {} & \ctbl{C_{2j}} \big\vert_{D + h^{\ast} E} \arrow[r] & \ctbl{U_{2i}} \big\vert_D & 
\end{tikzcd}\]
where the bottom map is induced by the cotangent map of
\begin{align*}
U_{2i} &\hookrightarrow U_{2i}\times \bP^{j-i}\to C_{2j},\\
D&\mapsto (D,E) \mapsto \gamma_{i,j}(D,E).
\end{align*}
Using \eqref{eqn: canonical bundle and xi_D}, one has
\begin{align*}
	\Ker \Bigl( \ctbl{C_{2j}} \big\vert_{D + h^{\ast} E} \to
	\ctbl{U_{2i}} \big\vert_D  \Bigr) 
	&\cong H^0(C, \omega_C(-D) \otimes h^{\ast} \cO_E)\\
	&\cong H^0(\bP^1, h_{\ast} \omega_C(-D) \otimes \cO_E)\\
	&\cong H^0(\bP^1, \omega_{\bP^1} \otimes \cO_E) \oplus
	H^0(\bP^1, \cO_{\bP^1}(g-1-h_{\ast} D) \otimes \cO_E)\\
	&=\ctbl{\bP^{j-i}} \big\vert_E\oplus
	H^0(\bP^1, \cO_{\bP^1}(g-1-h_{\ast} D) \otimes \cO_E),
\end{align*}
and the morphism to the cotangent space of $\bP^{j-i}$ is the projection to the
first summand. Hence, we deduce that
\[
	d\gamma_{i,j} : \ctbl{C_{2j}} \big\vert_{D + h^{\ast} E} \to 
	\ctbl{U_{2i}} \big\vert_D \oplus \ctbl{\bP^{j-i}} \big\vert_E
\]
is surjective, proving (a); and that its kernel can be identified with
\begin{equation}\label{eqn: conormal bundle of N restricting to D,E}
	N_{\gamma_{i,j}}^{\ast} \restr{(D, E)} \cong 
	H^0(\bP^1, \cO_{\bP^1}(g-1-h_{\ast} D) \otimes \cO_E).
\end{equation}
This isomorphism is natural in $E\in \bP^{j-i}$, and therefore
\[
	N_{\gamma_{i,j}}^{\ast} \big\vert_{ \{D\} \times \bP^{j-i}} 
	\cong (p_2)_{\ast} \bigl( p_1^{\ast} \cO_{\bP^1}(g-1-h_{\ast} D) 
	\otimes \cO_{\sE_{j-i}} \bigr).
\]
The latter is a vector bundle on $\bP^{j-i}$ because the line bundle
$\cO_{\bP^1}(g-1-h_{\ast} D)$ separates $j-i$ points (on account of the inequality
$g-1-2i =2n-2i> j-i$). Varying $D\in U_{2i}$, we see that the
projectivized conormal bundle $N^{\ast}_{\gamma_{i,j}}|_{U_{2i}\times \bP^{j-i}}$ is a projective bundle over
$U_{2i}$, hence is smooth over $U_{2i}$. Moreover, its fiber over $D \in U_{2i}$ is the secant bundle
$B^{j-i-1}(\cO_{\bP^1}(g-1-h_{\ast} D))$, which proves (c).
\end{proof}

\begin{remark}
This lemma is parallel to \cite[Lemma 1.3]{Bertram92}, with the difference that the
relevant divisor is $h_{\ast} D$, not $2 h_{\ast} D$ as in \cite{Bertram92}.
\end{remark}
From the proof of Lemma \ref{lemma: conormal of addition maps}, we can deduce one additional useful fact. For $0
\leq i<j<k \leq n$, consider the commutative diagram induced by \eqref{eqn: diagram
for addition maps} via restriction:
\[ \begin{tikzcd}
(U_{2i}\times \bP^{j-i}) \times \bP^{k-j} \arrow[d,hook,"\gamma_{i,j}\times \id"]
\arrow[r,twoheadrightarrow,"\id \times r"] & U_{2i}\times \bP^{k-i}  \arrow[d,hook,"\gamma_{i,k}"]\\
C_{2j}\times \bP^{k-j} \arrow[r,"\gamma_{j,k}"] & C_{2k}
\end{tikzcd}
\]

\begin{corollary}\label{corollary: conormal of addition maps}
For $D\in U_{2i}$, the induced map of conormal bundles 
\[ \epsilon:(\id\times r)^{\ast}N^{\ast}_{\gamma_{i,k}} \big\vert_{\{D\}\times
	 \bP^{k-i}} \to N^{\ast}_{\gamma_{i,j}\times \id} \big\vert_{\{D\}\times \bP^{j-i}\times \bP^{k-j}}\]
	 on $\{D\}\times \bP^{j-i}\times \bP^{k-j}$ is surjective, and the diagram
	 \[ \begin{tikzcd}[column sep=huge] 
			 \bP N^{\ast}_{\gamma_{i,j}\times \id} \big\vert_{\{D\}\times
	 \bP^{j-i}\times \bP^{k-j}} \arrow[r,"\alpha"] \arrow[d,"\eqref{eqn: identification between conormal and secant bundle}\times \mathrm{id}_{\bP^{k-j}}"] & \bP
	 N^{\ast}_{\gamma_{i,k}} \big\vert_{\{D\}\times \bP^{k-i}} \arrow[d,"\eqref{eqn: identification between conormal and secant bundle}"] \\
 B^{j-i-1}(M) \times \bP^{k-j} \arrow[r,"\alpha_{j-i-1,k-i-1}"] & B^{k-i-1}(M)
    \end{tikzcd}\]
	 commutes.  Here $\alpha$ is induced by $\epsilon$ and the projection to $\bP
 N^{\ast}_{\gamma_{i,k}} \big\vert_{\{D\}\times \bP^{k-i}}$, both vertical arrows are
 isomorphisms, and $\alpha_{j-i-1,k-i-1}$ is the map in \eqref{eqn: alpha_m,k} for
 the curve $\bP^1$ and the line bundle $M = \cO_{\bP^1}(g-1-h_{\ast} D)$. 
 
\end{corollary}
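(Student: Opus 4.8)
The plan is to pin down the map $\epsilon$ completely explicitly, using the conormal bundle computations already made in the proof of Lemma~\ref{lemma: conormal of addition maps}, and then to read off surjectivity and the commutativity of the projectivized square from that description. First I would note that, because the restricted diagram \eqref{eqn: diagram for addition maps} commutes, i.e. $\gamma_{j,k}\circ(\gamma_{i,j}\times\id)=\gamma_{i,k}\circ(\id\times r)$, the natural map of conormal bundles $\epsilon$ is forced to be the restriction of the pulled-back cotangent map $(\gamma_{i,j}\times\id)^{\ast}d\gamma_{j,k}$ to the subbundle $(\id\times r)^{\ast}N^{\ast}_{\gamma_{i,k}}$. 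Indeed, for a local covector $v$ in that subbundle one has
\[ d(\gamma_{i,j}\times\id)\bigl((\gamma_{i,j}\times\id)^{\ast}d\gamma_{j,k}(v)\bigr)
= d(\id\times r)\bigl((\id\times r)^{\ast}d\gamma_{i,k}(v)\bigr) = 0, \]
the right-hand side vanishing because $v$ lies in $\ker\bigl((\id\times r)^{\ast}d\gamma_{i,k}\bigr)$; hence $(\gamma_{i,j}\times\id)^{\ast}d\gamma_{j,k}(v)$ indeed lands in $N^{\ast}_{\gamma_{i,j}\times\id}$.

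Next I would translate this into line bundles on $\bP^1$. Writing $M=\cO_{\bP^1}(g-1-h_{\ast}D)$ and invoking the canonical-bundle identifications from the proof of Lemma~\ref{lemma: conormal of addition maps} together with \eqref{eqn: canonical bundle and xi_D}, I have
\[ N^{\ast}_{\gamma_{i,k}}\big\vert_{(D,E+G)}\cong H^0(\bP^1,M\otimes\cO_{E+G}),
\qquad N^{\ast}_{\gamma_{i,j}\times\id}\big\vert_{(D,E,G)}\cong H^0(\bP^1,M\otimes\cO_E), \]
where $E\in\bP^{j-i}$ and $G\in\bP^{k-j}$ are regarded as effective divisors on $\bP^1$ and $E+G=r(E,G)$. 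On the $\ctbl{C_{2j}}$-factor the map $d\gamma_{j,k}$ is induced by the restriction of structure sheaves $\cO_{D+h^{\ast}(E+G)}\twoheadrightarrow\cO_{D+h^{\ast}E}$; chasing this through the pushforward splitting along $h\colon C\to\bP^1$ identifies $\epsilon$ with the restriction map
\[ H^0(\bP^1,M\otimes\cO_{E+G})\longrightarrow H^0(\bP^1,M\otimes\cO_E) \]
coming from the surjection $\cO_{E+G}\twoheadrightarrow\cO_E$ attached to the subdivisor $E\leq E+G$. This identification is the technical heart of the argument, and the step I expect to cost the most care, because it requires matching the cotangent-space description at the non-generic point $D+h^{\ast}E\in C_{2j}$ with the splitting \eqref{eqn: canonical bundle and xi_D}; once it is in place, everything else is formal.

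Granting this, surjectivity of $\epsilon$ is immediate: the surjection $\cO_{E+G}\twoheadrightarrow\cO_E$ remains surjective after tensoring with the line bundle $M$, and since $M\otimes\cO_{E+G}$ and $M\otimes\cO_E$ are torsion sheaves on $\bP^1$ (so their $H^1$ vanishes), the long exact sequence shows that the induced map on $H^0$ is onto. Finally, for the commutativity of the projectivized square I would recall that, by the construction of the secant bundle (Definition~\ref{definition: secant bundles}) and of the addition map \eqref{eqn: alpha_m,k} for the line bundle $M$ on $\bP^1$, the map $\alpha_{j-i-1,k-i-1}$ sends a one-dimensional quotient of $H^0(\bP^1,M\otimes\cO_E)$, together with the datum $G$, to the quotient of $H^0(\bP^1,M\otimes\cO_{E+G})$ obtained by precomposing with precisely the restriction map just identified with $\epsilon$. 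Since the vertical isomorphisms of the square are exactly the identifications \eqref{eqn: identification between conormal and secant bundle} of $\bP N^{\ast}$ with the secant bundles $B^{\bullet}(M)$, both routes around the diagram compute the same quotient, so the square commutes. This verifies all the assertions of the corollary.
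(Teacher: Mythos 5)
Your proposal is correct and takes essentially the same route as the paper's proof: both reduce everything to the fiberwise identification \eqref{eqn: conormal bundle of N restricting to D,E} from the proof of Lemma \ref{lemma: conormal of addition maps}, under which $\epsilon$ becomes the restriction map $H^0(\bP^1, M\otimes\cO_{E+G}) \to H^0(\bP^1, M\otimes\cO_{E})$ (so surjectivity is immediate), and the commutativity of the projectivized square is then read off from the definition of $\alpha_{j-i-1,k-i-1}$ in \eqref{eqn: alpha_m,k}. The paper's proof is simply terser; your additional verifications (that $\epsilon$ lands in $N^{\ast}_{\gamma_{i,j}\times \id}$, and the torsion-sheaf argument for surjectivity on $H^0$) are details it leaves implicit.
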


\begin{proof}
	To simplify the notation, fix a point $D \in U_{2i}$ and denote $M =
	\cO_{\bP^1}(g-1-h_{\ast} D)$. For $E_1 \in
	\bP^{j-i}$ and $E_2 \in \bP^{k-j}$, according to \eqref{eqn: conormal bundle of N restricting to D,E} the map
\[
	\epsilon \vert_{(D, E_1, E_2)} : N_{\gamma_{i,k}}^{\ast} \big\vert_{(D, E_1 + E_2)}
	\to N_{\gamma_{i,j}}^{\ast} \big\vert_{(D, E_1)}
\]
is identified with the map
\[
	H^0 \bigl( \bP^1, M \otimes \cO_{E_1 + E_2} \bigr) \to
	H^0 \big( \bP^1, M \otimes \cO_{E_1} \bigr),
\]
which is obviously surjective. The remaining assertion is clear from \eqref{eqn:
alpha_m,k}.
\end{proof}

\begin{lemma}\label{lemma: conormal of abel jacobi maps}
For $0\leq j\leq n$, consider the map $\delta_j$ from \eqref{eqn: the chain of maps to Jacobian}, then the following holds.
\begin{enumerate}
	\item [(a)]$d\delta_{j}:\delta_j^{\ast} \ctbl{\Jac(C)} \to \ctbl{C_{2j}}$ is surjective when restricted to $U_{2j}$. 
    \item [(b)] The fiber of $N^{\ast}_{\delta_{j}}$ over
		 $D\in U_{2j}$ is $ H^0(\bP^1,\cO_{\bP^1}(g-1- h_{\ast} D))$.
\end{enumerate}    
\end{lemma}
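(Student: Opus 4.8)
The plan is to identify the codifferential $d\delta_j$ with a restriction map of canonical sections, and then transfer the whole computation to $\bP^1$ via the hyperelliptic map $h$, exactly in the spirit of the proof of Lemma \ref{lemma: conormal of addition maps}.

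First I would pin down the codifferential. As in that proof, we use the canonical identification $\ctbl{C_{2j}}\big\vert_D \cong H^0(C,\omega_C \otimes \cO_D)$, and on the Jacobian side the translation-invariant identification $\ctbl{\Jac(C)} \cong H^0(C,\omega_C)$. Writing $\delta_j$ as the Abel--Jacobi map $C_{2j}\to \Pic^{2j}(C)$ followed by the translation isomorphism $L\mapsto (n-j)g^1_2\otimes L$, and noting that translation acts trivially on the cotangent bundle of a torsor under an abelian variety, the codifferential of $\delta_j$ at $D$ agrees with that of the Abel--Jacobi map. The latter is the restriction map
\[
	\rho_D \colon H^0(C,\omega_C)\to H^0(C,\omega_C\otimes \cO_D)
\]
induced by the surjection $\omega_C \to \omega_C\otimes \cO_D$, so that $N^{\ast}_{\delta_j}\big\vert_D = \Ker \rho_D = H^0(C,\omega_C(-D))$.

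Next I would push the short exact sequence $0\to \omega_C(-D)\to \omega_C \to \omega_C \otimes \cO_D \to 0$ forward along $h\colon C\to \bP^1$. Using $\omega_C \cong h^{\ast}\cO_{\bP^1}(g-1)$, the projection formula, and the identity $h_{\ast}\cO_D \cong \cO_{h_{\ast}D}$ valid for $D\in U_{2j}$ (Notation \ref{notation: associated divisors on P1}), together with $h_{\ast}\omega_C \cong \omega_{\bP^1}\oplus \cO_{\bP^1}(g-1)$ and \eqref{eqn: canonical bundle and xi_D}, the map $\rho_D$ is identified (after dropping the $\omega_{\bP^1}$-summand, whose $H^0$ vanishes) with the restriction
\[
	H^0(\bP^1,\cO_{\bP^1}(g-1))\to H^0(\bP^1,\cO_{\bP^1}(g-1)\otimes \cO_{h_{\ast}D})
\]
coming from $0\to \cO_{\bP^1}(g-1-h_{\ast}D)\to \cO_{\bP^1}(g-1)\to \cO_{\bP^1}(g-1)\otimes \cO_{h_{\ast}D}\to 0$. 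Since $\deg \cO_{\bP^1}(g-1-h_{\ast}D)=2n-2j\geq 0$ for $j\leq n$, we have $H^1(\bP^1,\cO_{\bP^1}(g-1-h_{\ast}D))=0$, so this restriction is surjective; this proves (a). Its kernel is $H^0(\bP^1,\cO_{\bP^1}(g-1-h_{\ast}D))$, which under \eqref{eqn: canonical bundle and xi_D} is precisely $H^0(C,\omega_C(-D)) = N^{\ast}_{\delta_j}\big\vert_D$, proving (b).

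The only real subtlety is the identification of $d\delta_j$ with $\rho_D$ and the bookkeeping around the translation isomorphism; once this is set up, both statements reduce to the vanishing of $H^1$ of a line bundle of nonnegative degree on $\bP^1$. I expect this identification to be the main (though mild) obstacle, since everything afterward is a routine transfer along $h$. The hypothesis $D\in U_{2j}$ enters exactly to guarantee $h_{\ast}\cO_D\cong \cO_{h_{\ast}D}$, which is what puts the target of $\rho_D$ into the clean form above.
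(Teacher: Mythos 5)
Your proposal is correct, and its overall architecture is the same as the paper's: identify the codifferential of $\delta_j$ at $D$ with the restriction map $H^0(C,\omega_C)\to H^0(C,\omega_C\otimes \cO_D)$ (translation by $(n-j)g^1_2$ being irrelevant on cotangent spaces), then transfer to $\bP^1$ via $h_{\ast}$ and \eqref{eqn: canonical bundle and xi_D}. The one place where you genuinely diverge is part (a): the paper stays on $C$ and gets surjectivity by Serre duality, identifying the cokernel with the dual of $\operatorname{coker}\bigl(H^0(\cO_C)\to H^0(\cO_C(D))\bigr)$, which vanishes because $h^0(\cO_C(D))=1$ for $D\in U_{2j}$ — this is the classical criterion that the Abel--Jacobi map is an immersion exactly on the locus $h^0=1$, valid on any curve. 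You instead push the whole map down to $\bP^1$, identify it (on the relevant summand, the other having no sections) with the restriction map $H^0(\cO_{\bP^1}(g-1))\to H^0(\cO_{\bP^1}(g-1)\otimes\cO_{h_{\ast}D})$, and quote $H^1(\bP^1,\cO_{\bP^1}(g-1-h_{\ast}D))=0$; here $D\in U_{2j}$ enters through $h_{\ast}\cO_D\cong\cO_{h_{\ast}D}$ rather than through $h^0(\cO_C(D))=1$ (the two conditions are equivalent for effective divisors of this degree). Your route is slightly less general but has the merit of being uniform with part (b) and with the paper's own proof of Lemma \ref{lemma: conormal of addition maps}; the small compatibility check you gloss over — that under the projection-formula decomposition of $h_{\ast}\omega_C$ the pushed-forward restriction map really is the natural restriction map on $\bP^1$ — is routine and does not constitute a gap.
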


\begin{proof}
We only sketch the proof, as it is similar to that of Lemma \ref{lemma: conormal of
addition maps}. The cotangent bundle $\ctbl{\Jac(C)}$ is a trivial bundle with fibers
isomorphic to $H^0(C, \omega_C)$, and the map $d\delta_{j}$ over $D\in C_{2j}$ can be identified with
\begin{equation}\label{eqn: tangent map of Abel Jacobi}
	H^0(C, \omega_C) \to H^0(C, \omega_C \otimes \cO_D).
\end{equation}
The cokernel of this map is the kernel of $H^1(\omega_C(-D))\to H^1(\omega_C)$; dually one has the natural map $H^0(\cO_C)\to H^0(\cO_C(D))$. If $D \in U_{2j}$, then $h^0(\cO_C(D))=1$ and the last map is an isomorphism. Therefore the map \eqref{eqn: tangent map of Abel Jacobi} is surjective and we have
\begin{equation}\label{eqn: kernel of conormal of delta j}
	N_{\delta_j}^{\ast} \big\vert_D \cong H^0 \bigl( C, \omega_C(-D) \bigr)
	\cong H^0 \bigl( \bP^1, \cO_{\bP^1}(g-1-h_{\ast} D) \bigr),
\end{equation}
using \eqref{eqn: canonical bundle and xi_D}.
\end{proof}

Let us record one additional useful fact. Consider the
commutative diagram induced by \eqref{eqn: diagram for abel jacobi maps} via restriction $(i<j)$:
\[ \begin{tikzcd}
U_{2i} \times \bP^{j-i} \arrow[d,hook,"\gamma_{i,j}"] \arrow[r,twoheadrightarrow,"p_1"] & U_{2i} \arrow[d,hook,"\delta_{i}"]\\
C_{2j} \arrow[r,"\delta_{j}"] & \Jac(C)
\end{tikzcd}
\]

\begin{corollary}\label{corollary: conormal of abel jacobi maps}
Fix $0 \leq i < j \leq n$. For $D\in U_{2i}$, the induced map of conormal bundles 
\[ \epsilon: p_1^{\ast} N^{\ast}_{\delta_{i}} \big\vert_D \to N^{\ast}_{\gamma_{i,j}}
\big\vert_{\{D\}\times \bP^{j-i}}\]
over $\{D\} \times \bP^{j-i}$ is surjective, and the diagram 
   \[\begin{tikzcd}
			\bP N^{\ast}_{\gamma_{i,j}}\big\vert_{\{D\}\times \bP^{j-i}}
			\arrow[r,"\beta"] \arrow[d,"\eqref{eqn: identification between conormal and secant bundle}"]& \bP N^{\ast}_{\delta_{i}} \big\vert_D \arrow[d,"\cong"] \\
			B^{j-i-1}(M) \arrow[r,"\beta_{j-i-1}"] & \bP H^0(\bP^1,M)
   \end{tikzcd} 
   \]
	commutes. Here the first vertical map is an isomorphism by Lemma \ref{lemma: conormal
	of addition maps}(c), the second isomorphism comes from Lemma \ref{lemma: conormal of abel jacobi maps},  $\beta_{j-i-1}$ is the map \eqref{eqn: beta} for the curve
	$\bP^1$ and the line bundle $M = \cO_{\bP^1}(g-1-h_{\ast} D)$, and $\beta$ is
	induced by $\epsilon$ and the projection to $\bP N^{\ast}_{\delta_{i}} \big\vert_D$.
\end{corollary}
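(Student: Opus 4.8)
The plan is to reduce the entire statement to the single observation that the functorially-induced map $\epsilon$ on conormal bundles is nothing other than the family of restriction maps $H^0(\bP^1, M) \to H^0(\bP^1, M \otimes \cO_E)$ that, by definition, cuts out the secant-bundle projection $\beta_{j-i-1}$. First I would fix $D \in U_{2i}$ and write $M = \cO_{\bP^1}(g-1-h_{\ast} D)$, as in the proof of Lemma \ref{lemma: conormal of addition maps}. Since $\ctbl{\Jac(C)}$ is the trivial bundle with fiber $H^0(C, \omega_C)$, the pullback $p_1^{\ast} N^{\ast}_{\delta_i} \big\vert_D$ is the constant bundle over $\bP^{j-i}$ with fiber $N^{\ast}_{\delta_i} \big\vert_D$, which by \eqref{eqn: kernel of conormal of delta j} is $H^0(C, \omega_C(-D)) \cong H^0(\bP^1, M)$. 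In the language of \S\ref{sec:Secant bundles and maps between them}, this is precisely the constant bundle $H^0(\bP^1, M) \otimes \cO_{\bP^{j-i}}$ from which $B^{j-i-1}(M)$ is cut out.

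Next I would identify $\epsilon$ fiberwise. Over a point $E \in \bP^{j-i}$, the functoriality of conormal bundles for the commutative square sends a section of $\omega_C(-D)$, which vanishes along $D$, to its restriction to the length-$2(j-i)$ subscheme $h^{\ast} E$; by \eqref{eqn: conormal bundle of N restricting to D,E} the target of this restriction is exactly $N^{\ast}_{\gamma_{i,j}} \big\vert_{(D,E)}$. Under the isomorphisms coming from \eqref{eqn: canonical bundle and xi_D}, this becomes the natural restriction map
\[ H^0(\bP^1, M) \to H^0(\bP^1, M \otimes \cO_E). \]
Since $\deg M = g-1-2i = 2n-2i > j-i$, the line bundle $M$ separates $j-i$ points, so each such restriction is surjective; this gives the surjectivity of $\epsilon$.

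Finally, for the commutativity of the square, I would note that by the definition \eqref{eqn: beta} the projection $\beta_{j-i-1} : B^{j-i-1}(M) \to \bP H^0(\bP^1, M)$ is induced, fiber by fiber over $E$, by this very restriction map passed to projectivized one-dimensional quotients, whereas $\beta$ is by construction induced by $\epsilon$ together with the projection to $\bP N^{\ast}_{\delta_i} \big\vert_D$. Having identified $\epsilon$ with the family of restriction maps, the two coincide under the vertical isomorphisms \eqref{eqn: identification between conormal and secant bundle} and \eqref{eqn: kernel of conormal of delta j}, and the diagram commutes. I expect the one genuinely delicate point to be the fiberwise identification in the middle step: matching the abstractly-defined functorial map $\epsilon$ with the concrete restriction-of-sections map requires chasing the snake-lemma descriptions of $N^{\ast}_{\delta_i}$ and $N^{\ast}_{\gamma_{i,j}}$ from the proofs of Lemma \ref{lemma: conormal of abel jacobi maps} and Lemma \ref{lemma: conormal of addition maps}. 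Once that is in place, surjectivity and commutativity are immediate, exactly as in the proof of Corollary \ref{corollary: conormal of addition maps}.
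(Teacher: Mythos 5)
Your proposal is correct and follows essentially the same route as the paper's proof: both identify the fiber of $\epsilon$ at $(D,E)$ with the restriction map $H^0(\bP^1,M)\to H^0(\bP^1,M\otimes\cO_E)$ via \eqref{eqn: kernel of conormal of delta j} and \eqref{eqn: conormal bundle of N restricting to D,E}, deduce surjectivity by degree reasons, and read off the commutativity of the square directly from the definition \eqref{eqn: beta} of $\beta_{j-i-1}$. The "delicate point" you flag (matching the functorial $\epsilon$ with the concrete restriction-of-sections map) is exactly what the paper disposes of by citing those two equations from the proofs of Lemmas \ref{lemma: conormal of abel jacobi maps} and \ref{lemma: conormal of addition maps}.
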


\begin{proof}
	Fix a point $D \in U_{2i}$ and define $M =
	\cO_{\bP^1}(g-1-h_{\ast} D)$. For $E \in
	\bP^{j-i}$, by \eqref{eqn: conormal bundle of N restricting to D,E} and \eqref{eqn: kernel of conormal of delta j}, the map 
\[
	\epsilon \vert_{(D, E)} : N_{\delta_i}^{\ast} \big\vert_D
	\to N_{\gamma_{i,j}}^{\ast} \big\vert_{(D, E)}
\]
between the fibers of the two conormal bundles is identified with the map
\[
	H^0(\bP^1, M) \to H^0 \big( \bP^1, M \otimes \cO_E \bigr),
\]
which is surjective by degree reasons ($\deg M=g-1-2i>j-i=\deg E$). The remaining assertion is clear from
\eqref{eqn: beta}.
\end{proof}

We end this section by some properties of $\delta_j$ and $\gamma_{i,j}$ restricting to suitable open subsets.
\begin{lemma}\label{lemma: intersection}
Fix $0\leq i<j \leq n$.
\begin{enumerate}
    \item [(a)] The maps
\begin{align*}
\delta_{j}:U_{2j} \to \Jac(C), \quad \gamma_{i,j}:U_{2i}\times \bP^{j-i} \to C_{2j}
\end{align*}
are embeddings and the restriction of the diagram \eqref{eqn:
	 diagram for abel jacobi maps} is  Cartesian:
    \[\begin{tikzcd}
    U_{2i}\times \bP^{j-i} \arrow[r,twoheadrightarrow,"p_1"] \arrow[d,hook,"\gamma_{i,j}"] & U_{2i} \arrow[d,hook,"\delta_{i}"]\\ 
    C_{2j} \arrow[r,"\delta_{j}"] &\Jac(C)     \end{tikzcd}
    \]
Equivalently, we have
\[ \delta_j^{-1}(U_{2i})=U_{2i}\times \bP^{j-i}.\]    
    \item [(b)] For $0 \leq i < j < k \leq n$, the restriction of the diagram \eqref{eqn: diagram for addition maps} is Cartesian:
    \[ \begin{tikzcd}
    (U_{2i}\times \bP^{j-i}) \times \bP^{k-j} \arrow [r,"\id \times r"]
	 \arrow[d,hook,"\gamma_{i,j}\times \id"] & U_{2i} \times \bP^{k-i} \arrow[d,hook,"\gamma_{i,k}"]\\
    C_{2j}\times \bP^{k-j} \arrow[r,"\gamma_{j,k}"] & C_{2k} 
    \end{tikzcd}
    \]
    Equivalently, we have
\[ \gamma_{j,k}^{-1}(U_{2i}\times \bP^{k-i})=U_{2i}\times \bP^{j-i}\times \bP^{k-j}.\]  
    \item [(c)] In particular, for $j<k\leq n$ and $i=0$, we have $\gamma_{j,k}^{-1}(C_0\times \bP^k)=C_0\times \bP^{j}\times \bP^{k-j}$.
 
\end{enumerate}
\end{lemma}

\begin{proof}

As $\delta_i(D)=(n-i)g^1_2\otimes \cO_C(D)$ by \eqref{eqn: the chain of maps to Jacobian}, it follows immediately from the uniqueness of the decomposition \eqref{eqn: decomposition of line bundles} that
$\delta_i:U_{2i}\to \Jac(C)$ is injective, and that the image  $\delta_i(U_{2i})$ consists of line
bundles $L\in \Jac(C)$ such that $h^0(L)=n-i+1$.  Similarly, one can show that the map $\gamma_{i,j}:U_{2i}\times C_{j-i}\to C_{2j}$ is injective, and any divisor in its image can be written as
\[ D=E+\sum_{\ell=1}^{j-i}(p_\ell+q_\ell)\]
where $E$ is a degree $2i$ divisor with $h^0(\cO_C(E))=1$ and $p_\ell+q_\ell$ are hyperelliptic pairs. Therefore the image $\gamma_{i,j}(U_{2i}\times C_{j-i})$ consists of divisors $D$ of degree $2j$ such that
$h^0(\cO_C(D))=j-i+1$. To show the restricted maps $\delta_j$ and $\gamma_{i,j}$ are embeddings, one needs the surjectivity of
$d\delta_j$ and $d\gamma_{i,j}$, which follows from Lemma \ref{lemma: conormal of
addition maps}(a) and Lemma \ref{lemma: conormal of abel jacobi maps}(a).

Now we want to argue the diagrams in (a),(b) are Cartesian. We will identify $U_{2i}$ and $U_{2i}\times \bP^{j-i}$ with their images in $\Jac(C)$ and $C_{2j}$. We want to show
\[ \delta^{-1}_j(U_{2i})=U_{2i}\times \bP^{j-i}.\]
Let us first prove this set-theoretically. Suppose $D\in \delta^{-1}_j(U_{2i})$, this means that
\[ h^0(\delta_j(D))=h^0((n-j)g^1_2\otimes \cO_C(D))=n-i+1\]
by the characterization of the image of $U_{2i}$ in $\Jac(C)$. Using \eqref{eqn: decomposition of line bundles}, we must have 
\[ h^0(\cO_C(D))=(n-i+1)-(n-j)=j-i+1\]
and conclude that $D\in U_{2i}\times \bP^{j-i}$, by the characterization of its image in $C_{2j}$.  The argument for the set-theoretic part of (b),(c) is similar. To finish the proof of (a), we need to show the surjectivity of
\[ d\delta_j:\delta_j^{\ast}N^{\ast}_{U_{2i}|\Jac(C)} \to N^{\ast}_{U_{2i}\times
\bP^{j-i}|C_{2j}},\]
which follows from Corollary \ref{corollary: conormal of abel jacobi maps}. Similarly, the
statement that the diagram in (b) is Cartesian follows from Corollary \ref{corollary: conormal of addition maps}.
\end{proof}

\section{The proof of Proposition \ref{prop: chain of addition maps}}\label{sec: proof of chain of additions maps}

Let $C$ be a smooth hyperelliptic curve of genus $g = 2n+1$. Let $t\leq n$ be an integer and consider the proper chain  
$\{\gamma_{k,t}:C_{2k}\times \bP^{t-k} \to C_{2t}\}_{k=0}^{t-1}$ from \eqref{eqn: chain gamma ik} with the diagram \eqref{eqn: diagram for addition maps} for $(j<k<t)$
\[ \begin{tikzcd} 
C_{2j}\times \bP^{k-j} \times \bP^{t-k} \arrow[r,twoheadrightarrow,"\mathrm{id}\times r"] \arrow[d,"\gamma_{j,k}\times \mathrm{id}"]& C_{2j}\times \bP^{t-j} \arrow[d,"\gamma_{j,t}"] \\
C_{2k}\times\bP^{t-k}\arrow[r,"\gamma_{k,t}"] & C_{2t}
\end{tikzcd}\]
Denote by $\bl_j(C_{2k}\times \bP^{t-k}), \bl_j(C_{2t})$ the spaces associated to the chain $\{\gamma_{k,t}\}_{k=0}^{t-1}$ and denote by $\bl_i(C_{2j}\times \bP^{k-j}\times \bP^{t-k})$ the space associated to the chain $\{\gamma_{j,k}\times \mathrm{id}\}_{j=0}^{k-1}$.

Let $k$ be an integer with $0\leq k<t$. We break the proof of Proposition \ref{prop:
chain of addition maps} down into the following claims. (We apologize to the reader
for the fact that this looks so complicated; the problem is that the threefold
induction needs a lot of notation just to state everything correctly.)

\begin{claim}\label{claim: gammajt is NCD}
The chain $\{\gamma_{j,t}\}_{j=0}^{k}$ is a NCD chain.\end{claim}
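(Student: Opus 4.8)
The plan is to deduce this claim in exactly the way Claim \ref{claim: inductive for NCD chain AbelJacobi} was obtained in Step 5 of \S\ref{sec: proof of chain of abel jacobi maps}, namely from a single application of Lemma \ref{lemma: injective map imply normal crossing}, with the understanding that it sits at the end of a \emph{double} induction: an outer induction on $t$, and for each fixed $t$ an inner induction on $k$ run jointly with the companion Claims \ref{claim: gammakt is a map of chains}--\ref{claim: Gij and Eij} (which play the roles of Claims \ref{claim: abel Jacobi is a map of chains}--\ref{claim: conditions ast for Abel Jacobi}). Concretely, I would apply Lemma \ref{lemma: injective map imply normal crossing} to the proper chain $\{\gamma_{j,t}\}_{j=0}^{k}$ with the data
\[ X = C_{2t}, \quad X_j = C_{2j}\times \bP^{t-j}, \quad \phi_j = \gamma_{j,t}, \quad X_{i,j} = C_{2i}\times \bP^{j-i}\times \bP^{t-j}, \quad f_{i,j} = \gamma_{i,j}\times \id, \]
the auxiliary squares being the restrictions of \eqref{eqn: diagram for addition maps}. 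Since $C_{2t}$, $C_{2j}\times \bP^{t-j}$ and $C_{2i}\times \bP^{j-i}\times \bP^{t-j}$ are evidently smooth projective, it then remains only to verify the three hypotheses of the lemma.

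The first hypothesis asks that, for each $j\leq k$, the chain $\{\gamma_{i,j}\times \id\}_{i=0}^{j-1}$ be NCD, and this is where the outer induction on $t$ enters. This is the essential difference from \S\ref{sec: proof of chain of abel jacobi maps}: there the corresponding condition was simply Proposition \ref{prop: chain of addition maps}, taken as input, whereas here we are \emph{proving} that proposition and cannot invoke it directly. Instead I would observe that $\{\gamma_{i,j}\}_{i=0}^{j-1}$ is precisely the chain of addition maps into $C_{2j}$, i.e. the chain of Proposition \ref{prop: chain of addition maps} for the smaller value $t'=j$; since $j\leq k<t$, the outer inductive hypothesis supplies that $\{\gamma_{i,j}\}_{i=0}^{j-1}$ is NCD. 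Tensoring with the constant factor $\bP^{t-j}$ and applying Lemma \ref{lemma: chains with a constant factor} then yields that $\{\gamma_{i,j}\times \id\}_{i=0}^{j-1}$ is NCD, as required.

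The remaining two hypotheses are the content of the companion claims for the current $k$. That $\{\gamma_{j,t}\}_{j=0}^{k}$ is a chain of smooth centers, and that each $\gamma_{j,t}$ is a map of chains of centers, would be established exactly as in Step 4 of \S\ref{sec: proof of chain of abel jacobi maps}, via Lemma \ref{lemma: injective map imply smooth chain and normal crossing}, using the pull-back and complement identities of Claim \ref{claim: pullback of Eij and complements}, the fiberwise description of the exceptional divisors in Claim \ref{claim: blowup restricting to exceptional divisors for abel jacobi}, and the conditions $(\ast)$--$(\ast\ast\ast)$ of Claim \ref{claim: Gij and Eij}; these in turn rest on the conormal computation of Lemma \ref{lemma: conormal of addition maps} together with Corollary \ref{corollary: conormal of addition maps} and the secant-bundle input of Corollary \ref{cor: injective maps of chains for secant bundles over rational curves}(a). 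Granting these, Lemma \ref{lemma: injective map imply normal crossing} immediately delivers that $\{\gamma_{j,t}\}_{j=0}^{k}$ is an NCD chain.

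The main obstacle is organizational rather than computational: one must thread the two nested inductions so that at every appeal to hypothesis (1) the chain $\{\gamma_{i,j}\}$ with $j<t$ is already known to be NCD from the outer induction, while the inner induction on $k$ simultaneously delivers the map-of-chains and exceptional-divisor statements at the current stage. The genuinely technical heart of the argument lives in the companion claims --- in particular in identifying the fibers of the exceptional divisors with blown-up secant bundles $\bl_\bullet B^\bullet(\cO_{\bP^1}(g-1-h_{\ast}D))$ over $U_{2i}$, and in checking that $\gamma_{j,t}$ restricts fiberwise to the relative addition map $\alpha_{\bullet,\bullet}$, so that Corollary \ref{cor: injective maps of chains for secant bundles over rational curves}(a) can be applied fiberwise through Lemma \ref{lemma: fiberwise embedding,pullback etc}.
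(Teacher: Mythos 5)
Your proposal is correct and follows the paper's own argument essentially verbatim: the paper's Step 5 proves this claim by exactly this application of Lemma \ref{lemma: injective map imply normal crossing}, with hypothesis (1) supplied by the outer induction on $t$ (applied with $t'=j\leq k<t$) together with Lemma \ref{lemma: chains with a constant factor}, and hypotheses (2)--(3) supplied by the companion Claims \ref{claim: gammakt is a map of chains}--\ref{claim: Gij and Eij} established in Steps 1--4 via Lemma \ref{lemma: injective map imply smooth chain and normal crossing}. You also correctly pinpoint the one structural difference from \S\ref{sec: proof of chain of abel jacobi maps}, namely that the outer induction on $t$ replaces the appeal to Proposition \ref{prop: chain of addition maps}.
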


\begin{claim}\label{claim: gammakt is a map of chains}
Let $j\leq k<t$, then
\begin{align}\label{eqn: gammakt is a map of chains}
\bl_j(\gamma_{k,t})^{-1}(\bl_j(C_{2j}\times \bP^{t-j}))&=\bl_j(C_{2j}\times \bP^{k-j}\times \bP^{t-k}).
\end{align}
In other words, $\gamma_{k,t}$ is a map of chains of centers.
\end{claim}

As a consequence, $\bl_j(C_{2k}\times \bP^{t-k})$ coincides with the space associated to the chain $\{\gamma_{j,k}\times \mathrm{id}\}_{j=0}^{k-1}$,  and by Lemma \ref{lemma: chains with a constant factor} there are natural isomorphisms
\begin{align*}
\bl_j(C_{2k}\times \bP^{t-k})&=\bl_j(C_{2k})\times \bP^{t-k},\\
\bl_i(C_{2j}\times \bP^{k-j}\times \bP^{t-k})&=\bl_i(C_{2j})\times \bP^{k-j}\times \bP^{t-k}, \quad \forall i<j.
\end{align*}

\begin{claim}\label{claim: pullback of Eij and complements}
Let $0\leq i<j\leq k<t$ and consider the following diagram 
\begin{equation*}
\begin{tikzcd}
{} & {} & \bl_j(C_{2j}\times \bP^{t-j})\arrow[d,hookrightarrow,"\bl_j(\gamma_{j,t})"] & \\
G_{i,j}\arrow[r,hookrightarrow] \arrow[d] &\bl_j(C_{2k}\times \bP^{t-k}) \arrow[r,"\bl_j(\gamma_{k,t})"] \arrow[d] &\bl_j(C_{2t}) \arrow[d]\arrow[r,hookleftarrow] &E_{i,j}\arrow[d]\\
\bl_i(C_{2i}\times \bP^{k-i}\times \bP^{t-k}) \arrow[r,hookrightarrow,"\bl_i(\gamma_{i,k}\times \id)"] &\bl_i(C_{2k}\times \bP^{t-k})\arrow[r,"\bl_i(\gamma_{k,t})"] &\bl_i(C_{2t}) \arrow[r,hookleftarrow,"\bl_i(\gamma_{i,t})"] &\bl_i(C_{2i}\times \bP^{t-i})
\end{tikzcd}
\end{equation*}
where $G_{i,j},E_{i,j}$ are the exceptional divisors over $\bl_i(C_{2i}\times  \bP^{k-i}\times \bP^{t-k})$ and $\bl_i(C_{2i}\times \bP^{t-i})$, respectively. Then
\begin{equation}\label{eqn: pullback of Eij is Gij}
\bl_{j}(\gamma_{k,t})^{-1}(E_{i,j})=G_{i,j},
\end{equation}
\begin{equation}\label{complement of bljC2j times Pt-j}
\bl_j(C_{2j}\times \bP^{t-j}) - \bigcup_{i<j} E_{i,j} = U_{2j}\times \bP^{t-j},
\end{equation}
where $U_{2j}=C_{2j}-\gamma_{j-1,j}(C_{2j-2}\times \bP^1)$.
\end{claim}

\begin{claim}\label{claim: blowup restricting to exceptional divisors for abel jacobi} 
Let $0\leq i<j\leq k<t$. Denote by $E_{i,j}^{\circ}=E_{i,j}-\cup_{h<i}E_{h,j}$ and the same for $G^{\circ}_{i,j}$. Then the map induced from \eqref{eqn: pullback of Eij is Gij}
\[\bl_{j}(\gamma_{k,t})\colon G^{\circ}_{i,j} \to E^{\circ}_{i,j}\]
 is a morphism of $U_{2i}$-varieties, whose fiber over $D\in U_{2i}$ is 
 \[\bl_{j-i-1}(\alpha_{k-i-1,t-i-1}):\bl_{j-i-1}(B^{k-i-1}(M)\times \bP^{t-k})\to \bl_{j-i-1}B^{t-i-1}(M),\] associated to the chain
\[\{\alpha_{\ell,t-i-1}:B^{\ell}(M)\times \bP^{t-i-1-\ell}\to B^{t-i-1}(M)\}_{\ell=0}^{k-i-1}\]
from \eqref{eqn: alpha_m,k} with respect to the  line bundle $M=\cO_{\bP^1}(g-1-h_{\ast} D)$ (see Notation \ref{notation: associated divisors on P1}).
\end{claim}

\begin{claim}\label{claim: Gij and Eij}
Let $0\leq i<j\leq k<t$, then
\begin{align*}
\bl_j(\gamma_{k,t})^{-1}(\bl_j(C_{2j}\times \bP^{t-j})-\bigcup_{i<j} E_{i,j})=\bl_j(C_{2j}\times \bP^{k-j}\times \bP^{t-k})-\bigcup_{i<j} G_{i,j},\tag{$\ast$}\\
\bl_j(\gamma_{k,t})^{-1}( \bl_j(C_{2j}\times\bP^{t-j}) \cap E_{i,j}^{\circ})=\bl_j(C_{2j}\times \bP^{k-j}\times \bP^{t-k})\cap G_{i,j}^{\circ}, \tag{$\ast\ast$}\\
\bl_k(\gamma_{k,t}):G_{i,k}^{\circ}\to E_{i,k}^{\circ} \textrm{ is an embedding}.\tag{$\ast\ast\ast$}
\end{align*} 
\end{claim}

We prove these claims by induction on the triples $(j,k,t)$, representing the
integers referenced in the statements of the claims. In what follows, when we say
something like, ``\emph{Claim \ref{claim: pullback of Eij and complements} holds for a triple
$(j,k,t)$}'', we mean that the statements in the claim hold for all $0<i< j\leq k<t$,
in the notation used in Claim \ref{claim: pullback of Eij and complements}.
This principle applies similarly to the other claims.

The base case consists of the following two cases:
\begin{enumerate}
    \item $k=0$ and arbitrary $t$. For Claim \ref{claim: gammajt is NCD}, it follows from the
fact that $\gamma_{0,t}:C_0\times \bP^t\hookrightarrow C_{2t}$ is an embedding of
smooth varieties. There is nothing to check for the other claims. 
\item $j=0$, arbitrary $k<t$ (except Claim \ref{claim: gammajt is NCD}). Lemma \ref{lemma: intersection}(c) says that
\[ \gamma_{k,t}^{-1}(C_0\times \bP^{t})=C_0\times \bP^{k}\times \bP^{t-k}.\]
Therefore Claim \ref{claim: gammakt is a map of chains} holds. The statements for
Claim \ref{claim: pullback of Eij and complements} to Claim \ref{claim: Gij and Eij} are empty. 
\end{enumerate} 
For the sake of clarity, let us restate the assumptions that we are allowed to use in
the induction in the following form:
\begin{assumption}
    Let $j,k,t$ be integers such that $j\leq k<t\leq n$. Suppose
    \begin{itemize}
        \item Claim \ref{claim: gammajt is NCD}  holds for all the tuples $(k',t')$ such that 
        \[ k'<t'\leq t-1, \quad \textrm{ or } \quad  k'\leq k-1, t'=t.\]
        \item Claim \ref{claim: gammakt is a map of chains} to Claim \ref{claim: Gij and Eij} hold for all the triples $(j',k',t')$ such that 
        \[j'\leq k'<t'\leq t-1, \quad \textrm{ or} \quad j'\leq k'\leq k-1, t'=t.\]
        \item Claim \ref{claim: gammakt is a map of chains} holds for all $(j',k,t)$ such that $j'\leq j-1$.
    \end{itemize}
\end{assumption}
The goal is to prove Claim \ref{claim: gammajt is NCD} holds for the tuple $(k,t)$
and Claim \ref{claim: gammakt is a map of chains} to Claim \ref{claim: Gij and Eij} hold for the triple $(j,k,t)$. 

\textbf{Step 1}: Claim \ref{claim: pullback of Eij and complements}. Let $i$ be an integer such that $i<j$. By inductive Claim \ref{claim: gammajt is NCD} and Claim \ref{claim: gammakt is a map of chains}, the diagram in Claim \ref{claim: pullback of Eij and complements} exists and the spaces $\bl_i(C_{2i}\times \bP^{t-i}),\bl_i(C_{2i}\times 
\bP^{k-i}\times \bP^{t-k})$ are smooth. The inductive \eqref{eqn: gammakt is a map of chains} implies that
\[\bl_i(\gamma_{k,t})^{-1}\left(\bl_i(C_{2i}\times \bP^{t-i})\right)=\bl_i(C_{2i}\times \bP^{k-i}\times \bP^{t-k}).\]
Since $G_{i,j},E_{i,j}$ are the corresponding exceptional divisors, a repetitive application of \eqref{eqn: functoriality of blow ups}  gives \eqref{eqn: pullback of Eij is Gij}.

To prove \eqref{complement of bljC2j times Pt-j}, consider the following diagram 
\[ \begin{tikzcd}
G_{i,j}' \arrow[r,hook] \arrow[d]& \bl_j(C_{2j}\times \bP^{t-j}) \arrow[r,hook,"\bl_j(\gamma_{j,t})"] \arrow[d]& \bl_j(C_{2t}) \arrow[d]\arrow[r,hookleftarrow]& E_{i,j}\arrow[d]\\
\bl_{i}(C_{2i}\times \bP^{j-i}\times \bP^{t-j}) \arrow[r,hook,"\bl_i(\gamma_{i,j}\times \mathrm{id})"] & \bl_i(C_{2j}\times \bP^{t-j}) \arrow[r,"\bl_i(\gamma_{j,t})"] & \bl_i(C_{2t}) \arrow[r,hookleftarrow,"\bl_i(\gamma_{i,t})"] &\bl_i(C_{2i}\times \bP^{t-i}),
\end{tikzcd}\]
where $G'_{i,j}$ is the exceptional divisor over $\bl_{i}(C_{2i}\times \bP^{j-i}\times \bP^{t-j})$. This diagram exists because $\{\gamma_{i,t}\}_{i=0}^{j-1}$ and $\{\gamma_{i,j}\times \mathrm{id}\}_{i=0}^{j-1}$ are NCD chains by the inductive Claim \ref{claim: gammajt is NCD} and Lemma \ref{lemma: chains with a constant factor}, respectively, as well as the consequence of the inductive Claim \ref{claim: gammakt is a map of chains} that $\bl_i(C_{2j}\times \bP^{t-j})$ coincides with the space associated to the chain $\{\gamma_{i,j}\times \mathrm{id}\}_{i=0}^{j-1}$. Because $i<j$, the inductive Claim \ref{claim: gammakt is a map of chains} gives
\[ \bl_i(\gamma_{j,t})^{-1}(\bl_i(C_{2i}\times \bP^{t-i}))=\bl_i(C_{2i}\times \bP^{j-i}\times \bP^{t-j}).\]
Then using  \eqref{eqn: functoriality of blow ups} we have
\[ \bl_j(C_{2j}\times \bP^{t-j})\cap E_{i,j}=\bl_{j}(\gamma_{j,t})^{-1}(E_{i,j})=G'_{i,j}. \]
Since $\{G'_{i,j}\}_{i=0}^{j-1}$ is the set of exceptional divisors associated to the NCD chain $\{\gamma_{i,j}\times \mathrm{id}\}_{i=0}^{j-1}$, using Remark \ref{remark: complement of union of divisors} one has
\begin{align*}
\bl_j(C_{2j}\times \bP^{t-j})-\bigcup_{i<j}E_{i,j}&=\bl_j(C_{2j}\times \bP^{t-j})-\bigcup_{i<j}\left(\bl_j(C_{2j}\times \bP^{t-j})\cap E_{i,j}\right)\\
&= \bl_j(C_{2j}\times \bP^{t-j})-\bigcup_{i<j}G_{i,j}'\\
&=C_{2j}\times \bP^{t-j}-(\gamma_{j-1,j}\times \mathrm{id})(C_{2j-2}\times \bP^1\times \bP^{t-j})\\
&=U_{2j}\times \bP^{t-j}.
\end{align*}

\textbf{Step 2}: Claim \ref{claim: blowup restricting to exceptional divisors for
abel jacobi}. Let $i$ be an integer such that $i<j$. We prove Claim \ref{claim: blowup restricting to exceptional divisors for abel jacobi} for all quadruples $(i,\ell,k,t)$ such that $0\leq i<\ell\leq k<t$ and $\ell\in [i+1,j]$, by induction on $\ell$. 

Before the proof, we need some preparation. By the inductive Claim \ref{claim: gammajt is NCD}, the following diagram associated to the chain
$\{\gamma_{i,k}\}_{i=0}^{j-1}$ exists:
\[\begin{tikzcd}
E_{i,j}^k \arrow[r,hook] \arrow[d] &\bl_j(C_{2k}) \arrow[d]\\
\bl_{i}(C_{2i}\times \bP^{k-i}) \arrow[r,hook,"\bl_i(\gamma_{i,k})"] & \bl_i(C_{2k}),
\end{tikzcd}\]
where $E_{i,j}^k$ is the exceptional divisor over $\bl_{i}(C_{2i}\times \bP^{k-i})$. Since $i\leq j-1 $, we can use the inductive Claim \ref{claim: gammakt is a map of chains} to conclude that $\bl_i(C_{2k}\times \bP^{t-k})$ coincides with the space associated to the chain $\{\gamma_{j,k}\times \mathrm{id}\}_{j=0}^{k-1}$ for $i\leq j$.  As $G_{i,j}\subseteq \bl_i(C_{2k}\times \bP^{t-k})$ are exceptional divisors associated to the chain $\{\gamma_{j,k}\times \mathrm{id}\}_{j=0}^{k-1}$, using Lemma \ref{lemma: chains with a constant factor} we have
\begin{equation}\label{eqn: identify Gij with Eij'}
G_{i,j} =  E_{i,j}^k\times \bP^{t-k}.
\end{equation}

Now we start the inductive proof of Claim \ref{claim: blowup restricting to exceptional divisors for abel jacobi}. Let $D\in U_{2i}$, and denote by 
\[M\colonequals \cO_{\bP^1}(g-1-h_{\ast}D).
\]
The base case is $\ell=i+1$.  Let $h<i$. Because the chain $\{\gamma_{h,t}\}_{h=0}^{k-1}$ is NCD and $i\leq k-1$, we know $\bl_i(C_{2i}\times \bP^{t-i})$ intersect with $E_{h,i}$ transversally. Hence $E^{\circ}_{i,i+1}$ is the exceptional divisor over
\begin{equation*}
    \bl_i(C_{2i}\times \bP^{t-i})-\bigcup_{h<i} E_{h,i}\stackrel{\eqref{complement of bljC2j times Pt-j}}{=}U_{2i}\times \bP^{t-i}\subseteq \bl_i(C_{2t}),
\end{equation*} 
thus we may identify $E^{\circ}_{i,i+1}$ as the exceptional divisor for blowing up $C_{2t}$ along $U_{2i}\times \bP^{t-i}$. By Lemma \ref{lemma: conormal of addition maps}(c), we know that $E^{\circ}_{i,i+1}$ is a $U_{2i}$-variety with fiber over $D$ being $B^{t-i-1}(M)$. On the other hand, it follows from the inductive Claim \ref{claim: blowup restricting to exceptional divisors for abel jacobi} and \eqref{eqn: identify Gij with Eij'} that $G_{i,i+1}^{\circ}$ is a $U_{2i}$-variety with fiber over $D$ being $B^{k-i-1}(M)\times \bP^{t-k}$. Moreover, Corollary \ref{corollary: conormal of addition maps} shows that $\bl_{i+1}(\gamma_{k,t}):G^{\circ}_{i,i+1}\to E^{\circ}_{i,i+1}$ is a $U_{2i}$-morphism and the fiber over $D$ is 
\[\alpha_{k-i-1,t-i-1}:B^{k-i-1}(M)\times \bP^{t-k} \to B^{t-i-1}(M). \]
This concludes the base case.

Assume Claim \ref{claim: blowup restricting to exceptional divisors for abel jacobi} holds for all $\ell'\leq \ell-1$. Consider the following diagram
\[ \begin{tikzcd}
G^{\circ}_{i,\ell} \arrow[r,hook] \arrow[d] & \bl_{\ell}(C_{2k}\times \bP^{t-k})
\arrow[d]\arrow[r,"\bl_{\ell}(\gamma_{k,t})"] & \bl_{\ell}(C_{2t}) \arrow[d]
\rar[hookleftarrow] & E^{\circ}_{i,\ell} \arrow[d] \\
G^{\circ}_{i,\ell-1} \arrow[r,hook] & \bl_{\ell-1}(C_{2k}\times \bP^{t-k})
\arrow[r,"\bl_{\ell-1}(\gamma_{k,t})"] & \bl_{\ell-1}(C_{2t}) \rar[hookleftarrow] & E^{\circ}_{i,\ell-1} \\
{} &  \bl_{\ell-1}(C_{2\ell-2}\times \bP^{k-(\ell-1)}\times \bP^{t-k}) \arrow[u,hook,"\bl_{\ell-1}(\gamma_{\ell,k}\times \mathrm{id})"] & \bl_{\ell-1}(C_{2\ell-2}\times \bP^{t-(\ell-1)}) \arrow[u,hook,"\bl_{\ell-1}(\gamma_{\ell-1,t})"] & {}
\end{tikzcd}
\]
Using the inductive Claim \ref{claim: gammajt is NCD}, we know the chain $\{\gamma_{i,t}\}_{i=0}^{j-1}$ is NCD and hence 
$E^{\circ}_{i,\ell}$ is the blow up of $E^{\circ}_{i,\ell-1}$ along $E^{\circ}_{i,\ell-1} \cap \bl_{\ell-1}(C_{2\ell-2}\times \bP^{t-(\ell-1)})$. By inductive Claim \ref{claim: blowup restricting to exceptional divisors for abel jacobi} and Lemma \ref{lemma: fiberwise embedding,pullback etc}(3), we deduce that $E^{\circ}_{i,\ell}$ is a $U_{2i}$-variety and the fiber over $D$ is $\bl_{\ell-i-1}B^{t-i-1}(M)$. Similarly, using \eqref{eqn: identify Gij with Eij'}, the inductive Claim \ref{claim: blowup restricting to exceptional divisors for abel jacobi} and Lemma \ref{lemma: chains with a constant factor}, one can deduce that $G^{\circ}_{i,\ell}$ as a $U_{2i}$-variety with fiber $\bl_{\ell-i-1}(B^{k-i-1}(M)\times \bP^{t-k})$. Moreover, by Corollary \ref{cor: injective maps of chains for secant bundles over rational curves}(a), the map $\bl_\ell(\gamma_{k,t}):G_{i,\ell}^{\circ}\to E_{i,\ell}^{\circ}$ is a $U_{2i}$-morphism with fiber over $D$ being 
\[ \bl_{\ell-i-1}(\alpha_{k-i-1,t-i-1}):\bl_{\ell-i-1}B^{k-i-1}(M)\times \bP^{t-k}\to \bl_{\ell-i-1}B^{t-i-1}(M).\]
This finishes the inductive proof of Claim \ref{claim: blowup restricting to exceptional divisors for abel jacobi}.

\textbf{Step 3}: Claim \ref{claim: Gij and Eij}. For $(\ast)$, using \eqref{eqn: identify Gij with Eij'} and the inductive \eqref{complement of bljC2j times Pt-j}, we can show that 
\begin{align*}
    &\bl_j(C_{2j}\times \bP^{k-j}\times \bP^{t-k})-\bigcup_{i<j}G_{i,j}=(U_{2j}\times \bP^{k-j})\times \bP^{t-k}.
\end{align*}
On the other hand, from the previous step we know that $U_{2j}\times \bP^{t-j}$ is not touched by any blowup asssociated to $\bl_j(\gamma_{k,t})$. Using Lemma \ref{lemma: intersection}(b) we obtain
\begin{align*}
    \bl_j(\gamma_{k,t})^{-1}(U_{2j}\times \bP^{t-j})=\gamma_{k,t}^{-1}(U_{2j}\times \bP^{t-j})=(U_{2j}\times \bP^{k-j})\times \bP^{t-k}.
\end{align*}
Together with \eqref{complement of bljC2j times Pt-j}, this proves $(\ast)$.

As for $(\ast\ast)$, the fiberwise equality over $D\in U_{2i}$ follows from Claim \ref{claim: blowup restricting to exceptional divisors for abel jacobi} and Corollary \ref{cor: injective maps of chains for secant bundles over rational curves}(a). Then we can apply Lemma \ref{lemma: fiberwise embedding,pullback etc}(2) to obtain $(\ast\ast)$.  

Finally, Claim \ref{claim: blowup restricting to exceptional divisors for abel jacobi} together with Corollary  \ref{cor: injective maps of chains for secant bundles over rational curves}(a) and Lemma \ref{lemma: fiberwise embedding,pullback etc}(1) give $(\ast\ast\ast)$.

\textbf{Step 4}: Claim \ref{claim: gammakt is a map of chains}. We apply Proposition \ref{prop: injective map imply smooth chain and normal crossing} to the chain $\{\gamma_{j,t}\}_{j=0}^k$ with
\[ X=C_{2t}, \quad X_k=C_{2k}\times \bP^{t-k}, \quad X_{j,k}=C_{2j}\times \bP^{k-j}\times \bP^{t-k}, \quad f_{j,k}=\gamma_{j,k}\times \mathrm{id}, \quad \phi_k=\gamma_{k,t}.\]
The assumptions in Proposition \ref{prop: injective map imply smooth chain and normal crossing} can be checked as follows:
\begin{enumerate}[(I)]
\item The chain $\{\gamma_{j,k}\times \mathrm{id}\}_{j=0}^{k-1}$ is a NCD chain, by the inductive Claim \ref{claim: gammajt is NCD} and Lemma \ref{lemma: chains with a constant factor}.
\item The map
\begin{align*}
\gamma_{k,t}:\left(C_{2k}\times \bP^{t-k}-(\gamma_{k-1,k}\times \mathrm{id})(C_{2k-2}\times \bP^1\times \bP^{t-k})\right)=U_{2k}\times \bP^{t-k} \to C_{2t}\end{align*}
is an embedding, by Lemma \ref{lemma: intersection} and Remark \ref{remark: complement of union of divisors}.
\item The chain $\{\gamma_{j,t}\}_{j=0}^{k-1}$ is NCD by the inductive Claim \ref{claim: gammajt is NCD}.
\item The inductive Claim \ref{claim: gammakt is a map of chains} gives
\[ \bl_i(\gamma_{k,t})^{-1}(\bl_i(C_{2i}\times \bP^{t-i}))=\bl_i(C_{2i}\times \bP^{k-i}\times \bP^{t-k}), \quad \forall i\leq j-1.\]
Moreover, the conditions $(\ast),(\ast\ast)$ are satisfied by Claim \ref{claim: Gij and Eij}.
\end{enumerate}
Consequently, Proposition \ref{prop: injective map imply smooth chain and normal crossing} gives
\[ \bl_j(\gamma_{k,t})^{-1}(\bl_j(C_{2j}\times \bP^{t-j}))=\bl_j(C_{2j}\times \bP^{k-j}\times \bP^{t-k}).\]
This proves Claim \ref{claim: gammakt is a map of chains}. 

\textbf{Step 5}. Claim \ref{claim: gammajt is NCD}. Running the arguments above for all $j\leq k$ and using $(\ast\ast\ast)$ from Claim \ref{claim: Gij and Eij}, Proposition \ref{prop: injective map imply smooth chain and normal crossing} also implies that $\{\gamma_{j,t}\}_{j=0}^k$ is a NCD chain. This proves Claim \ref{claim: gammajt is NCD}.

Therefore, we finish the inductive proof for Claim \ref{claim: gammajt is NCD} to Claim \ref{claim: Gij and Eij}. As a consequence, this proves Proposition \ref{prop: chain of addition maps}.

\section{The proof of Proposition \ref*{prop: chain of Abel Jacobi maps} and Corollary \ref{corollary: generic structure of exceptional divisors}}\label{sec: proof of chain of abel jacobi maps}

In this section, we prove
Proposition \ref{prop: chain of Abel Jacobi maps} following a similar argument to the one of Proposition \ref{prop: chain of addition maps}. As a byproduct of the proof, we deduce Corollary \ref{corollary: generic structure of exceptional divisors}.

Let $C$ be a smooth hyperelliptic curve of odd genus $g = 2n+1$. Consider the proper chain  $\{\delta_k:C_{2k} \to \Jac(C)\}_{k=0}^n$ from \eqref{eqn: the chain of maps to Jacobian} with the diagram from \eqref{eqn: diagram for abel jacobi maps} for $j<k$
\[ \begin{tikzcd} 
C_{2j}\times \bP^{k-j} \arrow[r,twoheadrightarrow,"p_1"] \arrow[d,"\gamma_{j,k}"]& C_{2j} \arrow[d,"\delta_j"] \\
C_{2k}\arrow[r,"\delta_k"] & \Jac(C)
\end{tikzcd}\]
Let $k$ be an integer such that $0\leq k\leq n$. Denote by $\bl_j(C_{2k})$ and $\bl_j(\Jac(C))$
the spaces associated to the chain $\{\delta_k\}_{k=0}^n$. Denote by
$\bl_i(C_{2j}\times \bP^{k-j})$, the space associated to the chain
$\{\gamma_{j,k}\}_{j=0}^{k-1}$. We break the proof of Proposition \ref{prop: chain of
Abel Jacobi maps} down into the following statements:
\begin{claim}\label{claim: inductive for NCD chain AbelJacobi}
$\{\delta_{j}\}_{j=0}^{k}$ is a NCD chain.
\end{claim}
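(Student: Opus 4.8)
The plan is to prove the statement by induction on $k$, reducing at each stage to the inductive criterion of Lemma \ref{lemma: injective map imply smooth chain and normal crossing} and then assembling the conclusion with Lemma \ref{lemma: injective map imply normal crossing}.

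The base case $k=0$ is immediate: the chain consists of the single morphism $\delta_0 : C_0 \to \Jac(C)$, the inclusion of the point $n g_2^1$ into the smooth variety $\Jac(C)$; a one-term chain of a smooth embedded point has no pair of exceptional divisors to compare, so it is vacuously an NCD chain.

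For the inductive step I would assume that $\{\delta_j\}_{j=0}^{k-1}$ is an NCD chain and deduce the same for $\{\delta_j\}_{j=0}^{k}$. Recall that $\{\delta_j\}_{j=0}^{k}$ is a proper chain with auxiliary diagrams \eqref{eqn: diagram for abel jacobi maps}, so that $f_{i,j}=\gamma_{i,j}$ and $X_{i,j}=C_{2i}\times\bP^{j-i}$. To invoke Lemma \ref{lemma: injective map imply normal crossing} I must verify its three hypotheses. Condition (1), that $\{\gamma_{i,j}\}_{i=0}^{j-1}$ is NCD for every $j\le k$, is exactly Proposition \ref{prop: chain of addition maps} applied with the integer $j$. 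Conditions (2) and (3) — that $\{\delta_j\}_{j=0}^{k}$ is a chain of smooth centers and that each $\delta_j$ is a map of chains of centers — are supplied by Lemma \ref{lemma: injective map imply smooth chain and normal crossing} applied with the distinguished index $k$ (the lower-index cases of (3) having been produced at the earlier stages of the induction). That lemma in turn requires its four hypotheses (I)–(IV): hypothesis (I) is again Proposition \ref{prop: chain of addition maps}; hypothesis (II), that $\delta_k$ restricts to an embedding on $C_{2k}-\gamma_{k-1,k}(C_{2k-2}\times\bP^1)=U_{2k}$, is Lemma \ref{lemma: intersection}(a); hypothesis (III) is the inductive hypothesis; and hypothesis (IV) is the pair of pull-back identities $(\ast)$ and $(\ast\ast)$ relating the strict transform of $C_{2j}$ inside $\bl_j(\Jac(C))$ with that of $C_{2j}\times\bP^{k-j}$ inside $\bl_j(C_{2k})$.

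The main obstacle is hypothesis (IV), and it is here that the conormal computations of \S\ref{sec: calculation of conormal bundles} enter. Identity $(\ast)$ takes place away from all exceptional divisors; by Remark \ref{remark: complement of union of divisors} the relevant complements are canonically $U_{2j}$ and $U_{2j}\times\bP^{k-j}$, so $(\ast)$ collapses to the Cartesian square $\delta_k^{-1}(U_{2j})=U_{2j}\times\bP^{k-j}$ of Lemma \ref{lemma: intersection}(a). Identity $(\ast\ast)$, by contrast, lives on the open parts $E_i^{\circ}$, $F_i^{\circ}$ of the exceptional divisors, which are projectivized conormal bundles; the strategy is to check it fiberwise over $U_{2i}$ using Lemma \ref{lemma: fiberwise embedding,pullback etc}(2), after identifying, via Corollary \ref{corollary: conormal of abel jacobi maps}, the induced map of these projectivized conormal bundles with the secant-bundle map $\beta_{j-i-1}$ over $\bP^1$ for the line bundle $\cO_{\bP^1}(g-1-h_{\ast}D)$. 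In this way $(\ast\ast)$ and the embedding property $(\ast\ast\ast)$ are reduced to Bertram's result in the $\bP^1$-case, Corollary \ref{cor: injective maps of chains for secant bundles over rational curves}; this reduction is precisely what is recorded in Claims \ref{claim: gammajt is NCD}–\ref{claim: Gij and Eij}, which I am assuming. With (I)–(IV) verified, Lemma \ref{lemma: injective map imply smooth chain and normal crossing} supplies conditions (2) and (3), and Lemma \ref{lemma: injective map imply normal crossing} then upgrades these to the assertion that $\{\delta_j\}_{j=0}^{k}$ is an NCD chain, closing the induction.
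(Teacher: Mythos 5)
Your overall skeleton is the same as the paper's: induct on $k$, verify hypotheses (I)--(IV) of Lemma \ref{lemma: injective map imply smooth chain and normal crossing} to get that $\{\delta_j\}_{j=0}^k$ is a chain of smooth centers and that each $\delta_j$ is a map of chains of centers, then feed this into Lemma \ref{lemma: injective map imply normal crossing}. Your identification of (I) with Proposition \ref{prop: chain of addition maps}, of (II) with Lemma \ref{lemma: intersection}(a), and of $(\ast)$ with the Cartesian square $\delta_k^{-1}(U_{2j})=U_{2j}\times\bP^{k-j}$ all match the paper (Steps 3--5 of \S\ref{sec: proof of chain of abel jacobi maps}).

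The gap is in your treatment of $(\ast\ast)$ and $(\ast\ast\ast)$. You assert that the reduction of these conditions to Corollary \ref{cor: injective maps of chains for secant bundles over rational curves} ``is precisely what is recorded in Claims \ref{claim: gammajt is NCD}--\ref{claim: Gij and Eij}, which I am assuming.'' That is not correct: those claims concern only the chains $\{\gamma_{k,t}\}$ over the symmetric product $C_{2t}$, so they describe the exceptional divisors $E_{i,j}\subseteq\bl_j(C_{2k})$ and their intersections with $\bl_j(C_{2j}\times\bP^{k-j})$. They say nothing about the Jacobian side. To check $(\ast\ast)$ and $(\ast\ast\ast)$ fiberwise via Lemma \ref{lemma: fiberwise embedding,pullback etc} you also need to know that $F_{i,j}^{\circ}\subseteq\bl_j(\Jac(C))$ is a $U_{2i}$-variety with fiber $\bl_{j-i-1}\bP H^0(M)$, that $\bl_j(C_{2j})\cap F_{i,j}^{\circ}$ has fibers $\bl_{j-i-1}B^{j-i-1}(M)$, and that the restriction $\bl_j(\delta_k)\colon E_{i,j}^{\circ}\to F_{i,j}^{\circ}$ is fiberwise $\bl_{j-i-1}(\beta_{k-i-1})$. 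Corollary \ref{corollary: conormal of abel jacobi maps}, which you invoke, gives this identification only at the first stage $j=i+1$, where $F_{i,i+1}^{\circ}$ really is a projectivized conormal bundle; for $j>i+1$ the divisors $E_{i,j}^{\circ},F_{i,j}^{\circ}$ are strict transforms under the later blowups, and the identification has to be propagated through those blowups by a separate induction on $j$ (using transversality from the NCD hypothesis, Lemma \ref{lemma: fiberwise embedding,pullback etc}(3), and Corollary \ref{cor: injective maps of chains for secant bundles over rational curves}(b) to match the fiberwise centers). This is exactly the content of Claims \ref{claim: pullback of Fij and complements} and \ref{claim: blowup restricting to exceptional divisors for Jac} in the paper, which are \emph{not} among the assumed Section~\ref{sec: proof of chain of additions maps} claims but are proved in Steps 1--2 as part of a simultaneous induction. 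Relatedly, your induction carries too little: the inductive hypothesis cannot be just ``$\{\delta_j\}_{j=0}^{k-1}$ is NCD plus the earlier map-of-chains statements''; it must also carry the fiberwise descriptions of the $F_{i,j}$ from the earlier stages, since these are the input for analyzing the new exceptional divisors at stage $k$. Without this strengthened simultaneous induction, the verification of hypothesis (IV) does not close.
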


\begin{claim}\label{claim: abel Jacobi is a map of chains}
Let $j\leq k$, then $\bl_j(\delta_k)^{-1}(\bl_j(C_{2j}))=\bl_j(C_{2j}\times \bP^{k-j})$. In other words, $\delta_k$ is a map of chains of centers.
\end{claim}
Consequently, $\bl_{j}(C_{2k})$ coincides with the blowup space associated to the chain $\{\gamma_{j,k}\}_{j=0}^{k-1}$ and there is a natural isomorphism 
$\bl_j(C_{2j}\times \bP^{k-j})=\bl_j(C_{2j})\times \bP^{k-j}$.

\begin{claim}\label{claim: pullback of Fij and complements}
Let $0\leq i<j\leq k$ and consider the following diagram:
\begin{equation*}
\begin{tikzcd}
{} & {} & \bl_j(C_{2j})\arrow[d,hookrightarrow,"\bl_j(\delta_j)"] & \\
E_{i,j}\arrow[r,hookrightarrow] \arrow[d] &\bl_j(C_{2k}) \arrow[r,"\bl_j(\delta_k)"] \arrow[d] &\bl_j(\Jac(C)) \arrow[d]\arrow[r,hookleftarrow] &F_{i,j}\arrow[d]\\
\bl_i(C_{2i}\times \bP^{k-i}) \arrow[r,hookrightarrow,"\bl_i(\gamma_{i,k})"] &\bl_i(C_{2k}) \arrow[r,"\bl_i(\delta_k)"] &\bl_i(\Jac(C)) \arrow[r,hookleftarrow,"\bl_i(\delta_i)"] &\bl_i(C_{2i})
\end{tikzcd}
\end{equation*}
where $F_{i,j},E_{i,j}$ are the exceptional divisors over $\bl_i(C_{2i})$ and
$\bl_i(C_{2i}\times \bP^{k-i})$ respectively. Then
\begin{equation}\label{eqn: pullback of Fij is Eij}
\bl_{j}(\delta_k)^{-1}(F_{i,j})=E_{i,j},
\end{equation}
\begin{equation}\label{eqn: complement of all exceptional divisors for Jac}
\bl_j(C_{2j}) - \bigcup_{i<j} F_{i,j} = U_{2j},
\end{equation}
where $U_{2j}=C_{2j}-\gamma_{j,j-1}(C_{2j-2}\times \bP^1)$.
\end{claim}

\begin{claim}\label{claim: blowup restricting to exceptional divisors for Jac}
For $0\leq i<j\leq k$, denote by $E_{i,j}^{\circ}=E_{i,j}-\bigcup_{h<i}E_{h,j}$ and the same for $F_{i,j}^{\circ}$. Then \eqref{eqn: pullback of Fij is Eij} induces a morphism of $U_{2i}$-varieties 
\[ \bl_j(\delta_k):E_{i,j}^{\circ}\to F_{i,j}^{\circ}. \]
Moreover, over $D\in U_{2i}$ the morphism is 
\[ \bl_{j-i-1}(\beta_{k-i-1}):\bl_{j-i-1}B^{k-i-1}(M)\to \bl_{j-i-1}\bP H^0(M),\]
which is associated to the chain $\{\beta_\ell\}_{\ell=0}^{k-i-1}$ in \eqref{eqn: the chain of secant varieties} with respect to the line bundle $M=\cO_{\bP^1}(g-1-h_{\ast} D)$ (see Notation \ref{notation: associated divisors on P1}).

\end{claim}

\begin{claim}\label{claim: conditions ast for Abel Jacobi}
Let $0\leq i<j\leq k$, then
\begin{align*}
\bl_j(\delta_k)^{-1}(\bl_j(C_{2j})-\bigcup_{i<j} F_{i,j})=\bl_j(C_{2j}\times \bP^{k-j})-\bigcup_{i<j} E_{i,j},\tag{$\ast$}\\
\bl_j(\delta_k)^{-1}( \bl_j(C_{2j})\cap F_{i,j}^{\circ})=\bl_j(C_{2j}\times \bP^{k-j})\cap E_{i,j}^{\circ},\tag{$\ast\ast$}\\
\bl_k(\delta_k):E_{i,k}^{\circ}\to F_{i,k}^{\circ} \textrm{ is an embedding} \tag{$\ast\ast\ast$}.
\end{align*}
\end{claim}

As in the proof of Proposition \ref{prop: chain of addition maps}, we prove these claims by induction on the tuples $(j,k)$. The base case contains the following cases.
\begin{itemize}
    \item $k=0$: all the claims are clear.
    \item $j=0$ and arbitrary $k\geq 0$ (except Claim \ref{claim: inductive for NCD chain AbelJacobi}). For Claim \ref{claim: abel Jacobi is a map of chains} this is clear, as Abel-Jacobi theorem gives
\[ \bl_0(\delta_k^{-1})(C_0)=\delta_k^{-1}(C_0)=C_0\times \bP^k.\]
The statement for other claims are empty. 
\end{itemize}
As in the previous section, we summarize the assumptions that we are allowed to use
during the inductive step in the following way:
\begin{assumption}
Let $j,k$ be integers such that $j\leq k$. Suppose
\begin{itemize} 
\item Claim \ref{claim: inductive for NCD chain AbelJacobi} holds for all $k'$ such that $k'\leq k-1$.
\item Claim \ref{claim: abel Jacobi is a map of chains} to Claim \ref{claim: conditions ast for Abel Jacobi} hold for all the tuples $(j',k')$ such that $j'\leq k'\leq k-1$.
\item Claim \ref{claim: abel Jacobi is a map of chains} holds for all $(j',k)$ with  $j'\leq j-1$.
\end{itemize}
\end{assumption}
\noindent We will prove that Claim \ref{claim: inductive for NCD chain AbelJacobi}
holds for $k$ and Claim \ref{claim: abel Jacobi is a map of chains} to Claim \ref{claim: conditions ast for Abel Jacobi} hold for the tuple $(j,k)$.

\textbf{Step 1}: Claim \ref{claim: pullback of Fij and complements}. By the inductive
hypothesis, all maps in the diagram are well-defined. Let $i$ be an integer such that $i<j$. The inductive Claim \ref{claim: abel Jacobi is a map of chains} gives
\begin{equation*}\bl_i(\delta_k)^{-1}(\bl_i(C_{2i}))=\bl_i(C_{2i}\times \bP^{k-i}).\end{equation*}
Using the inductive Claim \ref{claim: inductive for NCD chain AbelJacobi} and Claim
\ref{claim: gammajt is NCD} that $\{\delta_j\}$ and $\{\gamma_{i,k}\}$ are chains of
smooth centers, we know $\bl_i(C_{2i})$, $\bl_i(C_{2i}\times \bP^{k-i})$ are smooth
varieties.  Then repeated application of \eqref{eqn: functoriality of blow ups} yields \eqref{eqn: pullback of Fij is Eij}.

Regarding \eqref{eqn: complement of all exceptional divisors for Jac}, to understand the intersection $\bl_j(C_{2j})\cap F_{i,j}$, let us consider the following diagram:
\begin{equation*} \begin{tikzcd}
E'_{i,j}\arrow[r,hookrightarrow] \arrow[d] &\bl_j(C_{2j}) \arrow[r,hookrightarrow,"\bl_j(\delta_j)"] \arrow[d] &\bl_j(\Jac(C)) \arrow[d]\arrow[r,hookleftarrow] &F_{i,j}\arrow[d]\\
\bl_i(C_{2i}\times \bP^{j-i}) \arrow[r,hookrightarrow,"\bl_i(\gamma_{i,j})"] &\bl_i(C_{2j}) \arrow[r,"\bl_i(\delta_j)"] &\bl_i(\Jac(C)) \arrow[r,hookleftarrow,"\bl_i(\delta_i)"] &\bl_i(C_{2i}), 
\end{tikzcd}
\end{equation*}
where $E'_{i,j}$ is the exceptional divisor over $\bl_i(C_{2i}\times \bP^{j-i})$. This diagram exists because $\{\delta_{i}\}_{i=0}^{j-1}$ and
$\{\gamma_{i,j}\}_{i=0}^{j-1}$ are NCD chains by the inductive Claim \ref{claim:
inductive for NCD chain AbelJacobi} and Proposition \ref{prop: chain of addition
maps}, respectively. As $i<j$, the inductive Claim \ref{claim: abel Jacobi is a map of chains} gives
\[ \bl_i(\delta_j)^{-1}(\bl_i(C_{2i}))=\bl_i(C_{2i}\times \bP^{j-i}).\]
As a consequence, using \eqref{eqn: functoriality of blow ups} again we have
\[ \bl_j(C_{2j})\cap F_{i,j}=\bl_j(\delta_j)^{-1}(F_{i,j})= E'_{i,j}.\] By construction, $\{E'_{i,j}\}_{i=0}^{j-1}$ is the set of exceptional divisors associated to the NCD chain $\{\gamma_{i,j}\}_{i=0}^{j-1}$, thus it follows from Remark \ref{remark: complement of union of divisors} that
\begin{equation*}
    \bl_j(C_{2j})-\bigcup_{i<j}F_{i,j}=\bl_j(C_{2j})-\bigcup_{i<j}E'_{i,j}=C_{2j}-\gamma_{j-1,j}(C_{2j-2}\times \bP^1)=U_{2j}.
\end{equation*}

\textbf{Step 2}: Claim \ref{claim: blowup restricting to exceptional
divisors for Jac}. Let $i$ be an integer with $i<j$. We prove Claim
\ref{claim: blowup restricting to exceptional divisors for Jac} for all triples $(i,\ell,k)$ such that $0<i<\ell\leq k$ and  $\ell\in [i+1,j]$, by an additional induction on $\ell$.

The base case is $\ell=i+1$. Let $h<i$. By construction, $F_{i,i+1}$
and $F_{h,i+1}$ are the exceptional divisors over $\bl_i(C_{2i})$ and
$F_{h,i}$, respectively. To understand 
\[F_{i,i+1}^{\circ}=F_{i,i+1}-\cup_{h<i}F_{h,i+1},\] we
need to know how $F_{h,i}$ intersects $\bl_i(C_{2i})$. By the inductive Claim \ref{claim: inductive for NCD chain AbelJacobi}, the chain $\{\delta_h\}_{h=0}^{k-1}$ is NCD, and hence we have transverse intersections
\[ \bl_i(C_{2i})\cap F_{h,i} \subseteq \bl_i(\Jac(C)).\]
It follows that $F^{\circ}_{i,i+1}$ is the exceptional divisor for the blow up of $\bl_i(\Jac(C))$ along
\begin{equation*}
    \bl_i(C_{2i})-\bigcup_{h<i} F_{h,i}=U_{2i},
\end{equation*} 
which  follows from the inductive \eqref{eqn: complement of all exceptional divisors for Jac}. Since $U_{2i}$ is away from all the exceptional divisors $F_{h,i}$, $F^{\circ}_{i,i+1}$ can also be identified with the exceptional divisor for the blow up of $\Jac(C)$ along $U_{2i}$, so that we can use Bertram's results as follows.

Let $D\in U_{2i}$ and write $M=\cO_{\bP^1}(g-1- h_{\ast} D)$. It follows from Lemma \ref{lemma: conormal of abel jacobi maps}(b) that $F^{\circ}_{i,i+1}$ is a $U_{2i}$-variety with fiber over $D$ being $\bP H^0(M)$. On the other hand, Claim \ref{claim: blowup restricting to exceptional divisors for abel jacobi} says that $E^{\circ}_{i,i+1}$ is a $U_{2i}$-variety with fiber over $D$ being $B^{k-i-1}(M)$. Moreover, by Corollary \ref{corollary: conormal of abel jacobi maps}, $\bl_{i+1}(\delta_k):E_{i,i+1}^{\circ} \to F_{i,i+1}^{\circ}$ is a $U_{2i}$-morphism, whose fiber over $D$ is
\[\beta_{k-i-1}:B^{k-i-1}(M) \to \bP H^0(M). \]
We conclude the base case $\ell=i+1$.

Assume Claim \ref{claim: blowup restricting to exceptional divisors for Jac} holds for all $\ell'\leq \ell-1$. Let us look at the diagram
\[ \begin{tikzcd}
E^{\circ}_{i,\ell} \arrow[r,hook] \arrow[d] & \bl_{\ell}(C_{2k})
\arrow[d]\arrow[r,"\bl_{\ell}(\delta_{k})"] & \bl_{\ell}(\Jac(C)) \arrow[d]
\rar[hookleftarrow] & F^{\circ}_{i,\ell} \arrow[d] \\
E^{\circ}_{i,\ell-1} \arrow[r,hook] & \bl_{\ell-1}(C_{2k})
\arrow[r,"\bl_{\ell-1}(\delta_{k})"] & \bl_{\ell-1}(\Jac(C)) \rar[hookleftarrow] & F^{\circ}_{i,\ell-1} \\
{} & \bl_{\ell-1}(C_{2\ell-2}\times \bP^{k-(\ell-1)}) \arrow[u,hook] & \bl_{\ell-1}(C_{2\ell-2}) \arrow[u,hook] & {}
\end{tikzcd}
\]
Moreover, \eqref{eqn: pullback of Fij is Eij} gives the diagram
\begin{equation}\label{diagram from Eiell to Fiell}\begin{tikzcd}
    E^{\circ}_{i,\ell} \arrow[d]\arrow[rr,"\bl_{\ell}(\delta_k)"]& &F^{\circ}_{i,\ell}\arrow[d]\\
    E^{\circ}_{i,\ell-1}\arrow[rr,"\bl_{\ell-1}(\delta_k)"] &&F^{\circ}_{i,\ell-1}
\end{tikzcd}\end{equation}
Let $D\in U_{2i}$, we would like to understand the fiber of the top horizontal map over $D$. To do this, let us compute the other parts of the diagram over $D$.

\textbf{Part I}: $F^{\circ}_{i,\ell-1}$. As $\ell-1\leq k-1$,  the inductive Claim \ref{claim: inductive for NCD chain AbelJacobi} implies that there is an embedding $\bl_{\ell-1}(\delta_{\ell-1}):\bl_{\ell-1}(C_{2\ell-2})\hookrightarrow \bl_{\ell-1}(\Jac(C))$, 
and the intersection 
\[\bl_{\ell-1}(C_{2\ell-2})\cap F^{\circ}_{i,\ell-1}\]
is transverse. Consequently, $F^{\circ}_{i,\ell}$ is
the blowup of $F^{\circ}_{i,\ell-1}$ along $ \bl_{\ell-1}(C_{2\ell-2})\cap F^{\circ}_{i,\ell-1}$. Note that $M=\cO_{\bP^1}(g-1-h_{\ast}D)$ has degree 
\[g-1-2i=2n-2i\geq 2(k-i-1)+1,\]
as $g(C)=2n+1$ and $k\leq n$. By the inductive hypothesis,
\[ F^{\circ}_{i,\ell-1}, \quad \bl_{\ell-1}(C_{2\ell-2})\cap F^{\circ}_{i,\ell-1}=\bl_{\ell-1}(\delta_{\ell-1})^{-1}(F^{\circ}_{i,\ell-1})\]
are $U_{2i}$-varieties whose fibers over $D$ are
\[ \bl_{\ell-i-2}\bP H^0(M),\quad \bl_{\ell-i-2}B^{\ell-i-2}(M).\]
We can use Lemma \ref{lemma: fiberwise embedding,pullback etc}(3) to obtain that $F^{\circ}_{i,\ell}$ is a  $U_{2i}$-variety with fiber over $D$ being the blow up of $\bl_{\ell-i-2}\bP H^0(M)$ along
$\bl_{\ell-i-2}B^{\ell-i-2}(M)$, which is by definition $\bl_{\ell-i-1}\bP H^0(M)$. 

\textbf{Part II}: $E^{\circ}_{i,\ell-1}$. A similar argument using Claim \ref{claim: blowup restricting to exceptional divisors for abel jacobi} says that $E^{\circ}_{i,\ell}$ is a $U_{2i}$-variety with fiber over $D$ being $\bl_{\ell-i-1}B^{k-i-1}(M)$, the blow up of $\bl_{\ell-i-2}B^{k-i-1}(M)$ along $\bl_{\ell-i-2}(B^{\ell-i-2}(M)\times \bP^{k-\ell+1})$. 

\textbf{Part III}: maps in \eqref{diagram from Eiell to Fiell}. The inductive hypothesis says $\bl_{\ell-1}(\delta_k): E^{\circ}_{i,\ell-1} \to F^{\circ}_{i,\ell-1}$
over $D$ is $\bl_{\ell-i-2}(\beta_{k-i-1})$. The vertical maps over $D$ are the blow ups described above.

Consequently, we can draw the corresponding diagram of \eqref{diagram from Eiell to Fiell} over $D$
\[ \begin{tikzcd}
      \bl_{\ell-i-1}B^{k-i-1}(M) \arrow[rr] \arrow[d]&& \bl_{\ell-i-1}\bP H^0(M)\arrow[d]\\
     \bl_{\ell-i-2}B^{k-i-1}(M) \arrow[rr,"\bl_{\ell-i-2}(\beta_{k-i-1})"] &&\bl_{\ell-i-2}\bP H^0(M)
\end{tikzcd}
\]
Since $\beta_{k-i-1}$ is a map of chains (Proposition \ref{prop: injective maps of chains for secant bundles}), the top horizontal map must be $\bl_{\ell-i-1}(\beta_{k-i-1})$, which is the fiber of $\bl_{\ell}(\delta_k):E^{\circ}_{i,\ell}\to F^{\circ}_{i,\ell}$ over $D$. This finishes the inductive proof of Claim \ref{claim: blowup restricting to exceptional divisors for Jac}.

\textbf{Step 3}: Claim \ref{claim: conditions ast for Abel Jacobi}. For
$(\ast)$, we know from the discussion in Step 2 that $U_{2j}$ is not touched by all the blowups associated to $\bl_j(\delta_k)$, thus
\begin{equation*}
\bl_j(\delta_k)^{-1}(U_{2j})=\delta_{k}^{-1}(U_{2j})=U_{2j}\times \bP^{k-j},
\end{equation*}
where the last equality comes from Lemma \ref{lemma: intersection}(a). 
Using \eqref{complement of bljC2j times Pt-j} in Claim \ref{claim: pullback of Eij and complements} we have
\begin{align*}
    \bl_j(C_{2j}\times \bP^{k-j})-\bigcup_{i<j}E_{i,j}=U_{2j}\times \bP^{k-j}.
\end{align*}
Putting these two equations together with \eqref{eqn: complement of all exceptional divisors for Jac}, we obtain $(\ast)$.

Let us turn to $(\ast\ast)$. Let $D\in U_{2i}$. By Claim \ref{claim: blowup restricting to exceptional divisors for Jac} and Claim \ref{claim: blowup restricting to exceptional divisors for abel jacobi}, the diagram
\begin{equation}\label{diagram: claim 6.5 astast}
\begin{tikzcd}
\bl_j(C_{2j}\times \bP^{k-j})   \cap E^{\circ}_{i,j} \arrow[r] \arrow[d,hook]&  \bl_j(C_{2j}) \cap F^{\circ}_{i,j} \arrow[d,hook] \\
E^{\circ}_{i,j} \arrow[r,"\bl_j(\delta_k)"] & F^{\circ}_{i,j}
\end{tikzcd}\end{equation}
is a diagram of $U_{2i}$-varieties, whose fiber over  $D$ is
\begin{equation}\label{diagram: fiberwise claim 6.5 ast ast}\begin{tikzcd}[column sep=large] 
\bl_{j-i-1}(B^{j-i-1}(M)\times \bP^{k-j}) \arrow[r] \arrow[d,hook,"\bl_{j-i-1}(\alpha_{j-i-1,k-i-1})"]& \bl_{j-i-1}B^{j-i-1}(M) \arrow[d,hook,"\bl_{j-i-1}(\beta_{j-i-1})"] \\
\bl_{j-i-1}B^{k-i-1}(M) \arrow[r,"\bl_{j-i-1}(\beta_{k-i-1})"] & \bl_{j-i-1}\bP H^0(M)
\end{tikzcd}\end{equation}
where $M$ denotes the line bundle
$\cO_{\bP^1}(g-1-h_{\ast} D)$ and the top horizontal map is induced by the natural projection
\[ B^{j-i-1}(M)\times \bP^{k-j} \to B^{j-i-1}(M).\]
By Corollary \ref{cor: injective maps of chains for secant bundles over rational curves}(b), the diagram \eqref{diagram: fiberwise claim 6.5 ast ast} is Cartesian, i.e.
\[ \bl_{j-i-1}(\beta_{k-i-1})^{-1}(\bl_{j-i-1}B^{j-i-1}(M))=\bl_{j-i-1}(B^{j-i-1}(M)\times \bP^{k-j}).\]
As $(\bl_j(C_{2j})\times \bP^{k-j})\cap E^{\circ}_{i,j}$ and $\bl_j(C_{2j})\cap F^{\circ}_{i,j}$ are both smooth, we can apply Lemma \ref{lemma: fiberwise embedding,pullback etc}(2) to the $U_{2i}$-morphism $\bl_j(\delta_k):E^{\circ}_{i,j}\to F^{\circ}_{i,j}$ to conclude that the diagram \eqref{diagram: claim 6.5 astast} is also Cartesian. This proves $(\ast\ast)$.

Last, for $(\ast\ast\ast)$. Claim \ref{claim: blowup restricting to exceptional divisors for Jac} implies  that the fiber of the map $\bl_{k}(\delta_k):E^{\circ}_{i,k}\to F^{\circ}_{i,k}$ over $D\in U_{2i}$ is $\bl_{k-i-1}(\beta_{k-i-1})$, which is an embedding by Corollary \ref{cor: injective maps of chains for secant bundles over rational curves}(b). As $E_{i,k}^{\circ},F^{\circ}_{i,k}$ are both smooth, we conclude from Lemma \ref{lemma: fiberwise
embedding,pullback etc}(1) that $(\ast\ast\ast)$ holds.

\textbf{Step 4}: Claim \ref{claim: abel Jacobi is a map of chains}. We apply Proposition \ref{prop: injective map imply smooth chain and normal crossing} to the chain $\{\delta_j\}_{j=0}^k$ with
\[ X=\Jac(C), \quad X_k=C_{2k}, \quad X_{j,k}=C_{2j}\times \bP^{k-j}, \quad f_{j,k}=\gamma_{j,k}, \quad \phi_k=\delta_k.\]
The assumptions in Proposition \ref{prop: injective map imply smooth chain and normal crossing} can be checked as follows:
\begin{enumerate}[(I)]
\item The chain $\{\gamma_{j,k}\}_{j=0}^{k-1}$ is NCD, by Proposition \ref{prop: chain of addition maps}.
\item The map $\delta_k:C_{2k}-\gamma_{k-1,k}(C_{2k-2}\times \bP^1)\to \Jac(C)$ is an embedding: by Notation \ref{notation: open part of symmetric product U2j} $C_{2k}-\gamma_{k-1,k}(C_{2k-2}\times \bP^1)=U_{2k}$ and $U_{2k}\hookrightarrow \Jac(C)$ by Lemma \ref{lemma: intersection}(a).
\item The chain $\{\delta_j\}_{j=0}^{k-1}$ is NCD, by the inductive Claim \ref{claim: inductive for NCD chain AbelJacobi}.
\item $\bl_i(\delta_k)^{-1}(\bl_i(C_{2i}))=\bl_i(C_{2i}\times \bP^{k-i})$ for all $i\leq j-1$, by the inductive Claim \ref{claim: abel Jacobi is a map of chains}.
\item The conditions $(\ast),(\ast\ast),$ are satisfied thanks to Claim  \ref{claim: conditions ast for Abel Jacobi}.
\end{enumerate}
Hence Proposition \ref{prop: injective map imply smooth chain and normal crossing} shows that
\[ \bl_j(\delta_k)^{-1}(\bl_j(C_{2j}))=\bl_j(C_{2j}\times \bP^{k-j}),\]
i.e. Claim \ref{claim: abel Jacobi is a map of chains} holds. 

\textbf{Step 5}: Claim \ref{claim: inductive for NCD chain AbelJacobi}. Running the arguments above for all $j\leq k$ and using $(\ast\ast\ast)$ from Claim \ref{claim: conditions ast for Abel Jacobi}, Proposition \ref{prop: injective map imply smooth chain and normal crossing} implies that that $\{\delta_j\}_{j=0}^k$ is a NCD chain. This proves Claim \ref{claim: inductive for NCD chain AbelJacobi}.
 
Therefore, we finish the inductive proof of all the claims above. Proposition \ref{prop: chain of Abel Jacobi maps} immediately follows.

\begin{proof}[Proof of Corollary \ref{corollary: generic structure of exceptional divisors}]
For $0\leq i \leq n-1$, the exceptional divisor $Z_i\subseteq \bl_{n+1}(\Jac(C))$ is the exceptional divisor $F_{i,n+1}\subseteq \bl_{n+1}(\Jac(C))$, defined as the preimage of $F_{i,n}\subseteq \bl_{n}(\Jac(C))$ (see the notations in Claim \ref{claim: pullback of Fij and complements}). The proper transform $\tilde{\Theta}$ is $F_{n,n+1}$, the exceptional divisor for the blow up of $\bl_n(\Jac(C))$ along $\bl_n(C_{2n})$. Since $\delta_{i}(C_{2i})=W^{n-i}_{g-1}$ and $\delta_i$ is
an isomorphism over $U_{2i}\subseteq C_{2i}$ by Lemma \ref{lemma: intersection}, this induces a natural identification
\[ U_{2i}\cong W^{n-i}_{g-1}(C)-W^{n-i+1}_{g-1}(C). \]

Let $D\in U_{2i}$ and set $M=\cO_{\bP^1}(g-1-h_{\ast}D)$. Claim \ref{claim: blowup restricting to exceptional divisors for Jac} implies that the fiber of the projection 
\[ F_{i,n}^{\circ}=F_{i,n}- \bigcup_{h<i} F_{h,n} \to W^{n-i}_{g-1}(C)-W^{n-i+1}_{g-1}(C)\cong U_{2i}\]
over $D$ is $\bl_{n-i-1}\bP H^0(\bP^1,M)$, the blowup space associated to the chain 
\[\{\beta_{k-i-1}:B^{k-i-1}(M)  \to \bP H^0 (M)\}_{k=i+1}^{n}.\]
Blowing up one more time, we can argue as in the proof of Claim \ref{claim: blowup restricting to exceptional divisors for Jac} that the fiber of $F_{i,n+1}^{\circ}=F_{i,n+1}- \bigcup_{h<i} F_{h,n+1}$ over $D$ is $\bl_{n-i}\bP H^0(\bP^1,M)$.
Let $k$ be an integer such $i+1\leq k\leq n$, Claim \ref{claim: blowup restricting to exceptional divisors for Jac} implies that
\[F^{\circ}_{i,k}\cap \bl_{k}(C_{2k})=E^{\circ}_{i,k}\subseteq \bl_k(\Jac(C))\]
is a $U_{2i}$-variety with fiber over $D$ is $\bl_{k-i-1}B^{k-i-1}(M)$. Then in the final blow up space we have
\[ F_{i,n+1}^{\circ} \cap F_{k,n+1}\subseteq \bl_{n+1}(\Jac(C))\]
is a $U_{2i}$-variety with fiber over $D$ is the exceptional divisor 
\[ H_{k-i-1}\subseteq  \bl_{n-i}\bP H^0(\bP^1,M)\]
over $\bl_{k-i-1}B^{k-i-1}(M)$, associated to the chain $\{\beta_{k-i-1}\}_{k=i+1}^{n}$. 

As $\{\beta_{k-i-1}\}_{k=i+1}^n$ is a smooth chain by Corollary \ref{cor: injective maps of chains for secant bundles over rational curves}(b), Remark \ref{remark: complement of union of divisors} implies that the fiber of
\[ Z_{i}-\bigcup_{\substack{\quad 0\leq j\leq n-1\\j\neq i}}Z_j-\tilde{\Theta}=F_{i,n+1}^{\circ}-\bigcup_{i+1\leq k\leq n} F_{k,n+1} \]
over $D$ is
\[ \bl_{n-i}\bP H^0(\bP^1,M)-\bigcup_{i+1\leq k\leq n}H_{k-i-1}\cong \bP H^0(\bP^1,M)- \beta_{n-i-1}(B^{n-i-1}(M)).\]
By definition, the image $\beta_{n-i-1}(B^{n-i-1}(M))$ is the $(n-i-1)$-th secant variety 
\[\mathrm{Sec}^{n-i-1}(\bP^1)\subseteq 
\bP H^0(\bP^1,M)=\bP^{2(n-i)},\]
where $\bP^1$ is the rational normal curve $\beta_0(B^0(M))$. Note that the secant variety $\mathrm{Sec}^{n-i-1}(\bP^1)$ has degree $(n-i)+1$ and the rational curve has degree $2(n-i)$. Applying $i=n-r$, we obtain Corollary \ref{corollary: generic structure of exceptional divisors}.
\end{proof}

The following result is an (almost immediate) consequence of the proof above.
\begin{corollary}
    Let $L\in W^{r}_{g-1}(C)-W^{r+1}_{g-1}(C)$ for some $0\leq r\leq n$, so that $L=\delta_{n-r}(D)$ for a unique $D\in U_{2(n-r)}$. Set $M=\cO_{\bP^1}(g-1-h_{\ast}D)$, then the fiber of the map
    \[  \bl_{n+1}(\Jac(C)) \to \Jac(C)\]
    over $L$ is $\bl_{r}\bP H^0(\bP^1,M)$. In particular, the fiber of the map
	 $\pi_n:\bl_n(\Jac(C)) \to \Jac(C)$ in Theorem \ref{theorem: log resolution of
	 hyperelliptic theta divisors} over $L$ is $\bl_{r}\bP H^0(\bP^1,M)$, using
	 Bertram's notation from \S\ref{sec: Bertram's blow up construction}.

\end{corollary}

\begin{proof}
   Set $i=n-r$. The fiber we are interested in is the fiber of $F_{i,n+1}$ over $D$, using the notations above. From the proof of Corollary \ref{corollary: generic structure of exceptional divisors} we know that the fiber of 
    \[ F^{\circ}_{i,n+1} = F_{i,n+1} - \bigcup_{h<i} F_{h,n+1}\]
    over $D$ is $\bl_{n-i}\bP H^0(\bP^1,M)$. We claim that the fiber of $F_{i,n+1}$ over $D$ is the same as the fiber of $F^{\circ}_{i,n+1}$ over $D$. The reason is as follows. Let $h<i$, \eqref{eqn: complement of all exceptional divisors for Jac} gives $\bl_i(C_{2i})- \cup_{h<i} F_{h,i}=U_{2i}$. Hence the image of $U_{2i}\hookrightarrow \bl_i(C_{2i})\hookrightarrow \bl_i(\Jac(C))$ is away from $F_{h,i}$. As $F_{h,n+1}$ is the exceptional divisor over $F_{h,i}$, we conclude that the fiber of $F_{h,n+1}$ over $D$ is empty. This proves the desired result.

    The last statement follows from the fact that the image of $\bl_n(C_{2n})$ in $\bl_n(\Jac(C))$ is a divisor, so $\bl_{n+1}(\Jac(C))=\bl_n(\Jac(C))$.
\end{proof}
\section{Even genus case}\label{sec: even genus case}

In this section, let $C$ be a smooth hyperelliptic curve of even genus $g=2n+2$. We sketch a proof of Theorem \ref{theorem: log resolution of hyperelliptic theta divisors} for $C$. The ideas are essentially the same by reducing to the calculation of conormal bundles, but for parity reasons, the corresponding maps need some modification. First, we have a chain of maps $\{\delta_j\}_{j=0}^n$ to $\Jac(C)$, where
\begin{align*}
    \delta_{j}:C_{2j+1} &\to \Pic^{g-1}(C)=\Jac(C) ,\quad 0\leq j\leq n \\
    D &\mapsto  (n-j)g^1_2\otimes \cO_C(D).
\end{align*}
The image $\delta_j(C_{2j+1})$ is $W^{n-j}_{g-1}$, hence this is a proper chain. By
the Abel-Jacobi theorem, for each $\ell\geq 1$, we have $\bP^{\ell}\subseteq C_{2\ell}$, then for each $j\geq 1$, there is a proper chain of maps $\{\gamma_{i,j}\}_{i=0}^{j-1}$ induced by the addition maps:
\begin{align*}
    \gamma_{i,j}: C_{2i+1}\times \bP^{j-i} \hookrightarrow C_{2i+1}\times C_{2j-2i}\to C_{2j+1}, \quad 0\leq i<j.
\end{align*}
The even genus case of Theorem \ref{theorem: log resolution of hyperelliptic theta divisors} is reduced to the following analogue of Proposition \ref{prop: chain of addition maps} and Proposition \ref{prop: chain of Abel Jacobi maps}.
\begin{prop}\label{prop: injective maps for even genus} \leavevmode
\begin{enumerate}
    \item For each $1\leq j\leq n$, the chain \[\{\gamma_{i,j}:C_{2i+1}\times \bP^{j-i} \to C_{2j+1}\}_{i=0}^{j-1}\]
is a NCD chain and for each $1\leq i<j$, the map $\gamma_{i,j}$ is a map of chains of centers.
    \item The chain $\{\delta_{j}:C_{2j+1}\to \Jac(C)\}_{j=0}^n$ is a NCD chain and for each $1\leq j\leq n$, the map $\delta_{j}:C_{2j+1} \to \Jac(C)$ is a map of chains of centers.
\end{enumerate}
\end{prop}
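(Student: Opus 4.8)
The plan is to follow, almost word for word, the inductive proofs of Proposition~\ref{prop: chain of addition maps} and Proposition~\ref{prop: chain of Abel Jacobi maps} carried out in \S\ref{sec: proof of chain of abel jacobi maps} and \S\ref{sec: proof of chain of additions maps}. What makes this possible is that the entire abstract framework of \S\ref{sec: Bertram's blow up construction}---the inductive criterion Lemma~\ref{lemma: injective map imply smooth chain and normal crossing}, together with the relative statements in Lemma~\ref{lemma: fiberwise embedding,pullback etc}---never refers to the genus, and the secant bundle input from \S\ref{sec:Secant bundles and maps between them}, in particular Corollary~\ref{cor: injective maps of chains for secant bundles over rational curves}, is a statement about $\bP^1$ and the line bundles $\cO_{\bP^1}(d)$ alone. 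Thus the only genuinely new work is to re-establish the conormal bundle computations of \S\ref{sec: calculation of conormal bundles} for \emph{odd}-degree symmetric products; once these are in place, the nested inductions transfer verbatim.

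Accordingly, I would first introduce the analogue $U_{2j+1}\colonequals C_{2j+1}-\gamma_{j-1,j}(C_{2j-1}\times \bP^1)$ of Notation~\ref{notation: open part of symmetric product U2j}, the locus of degree-$(2j+1)$ divisors with no hyperelliptic subdivisor (so that $h^0(\cO_C(D))=1$). The essential observation is that the pushforward identity \eqref{eqn: canonical bundle and xi_D}, $h_{\ast}\omega_C(-D)\cong \omega_{\bP^1}\oplus \cO_{\bP^1}(g-1-h_{\ast}D)$, relies only on $\omega_C\cong h^{\ast}\cO_{\bP^1}(g-1)$ and on $h_{\ast}\cO_D\cong \cO_{h_{\ast}D}$, both of which hold for a divisor of any degree avoiding hyperelliptic pairs. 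Consequently the proofs of Lemma~\ref{lemma: conormal of addition maps} and Lemma~\ref{lemma: conormal of abel jacobi maps}, and of the comparison Corollaries~\ref{corollary: conormal of addition maps} and~\ref{corollary: conormal of abel jacobi maps}, go through unchanged, again identifying $\bP N^{\ast}_{\gamma_{i,j}}\big\vert_{\{D\}\times\bP^{j-i}}$ with a secant bundle $B^{j-i-1}(M)$ and $\bP N^{\ast}_{\delta_j}\big\vert_D$ with $\bP H^0(\bP^1,M)$, where now $M=\cO_{\bP^1}(g-1-h_{\ast}D)$ for $D\in U_{2i+1}$.

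The point that makes the two parities interchangeable is a numerical coincidence: for a divisor $D$ of degree $2i+1$ on a curve of genus $g=2n+2$, the bundle $M=\cO_{\bP^1}(g-1-h_{\ast}D)$ has degree $(2n+1)-(2i+1)=2n-2i$, which is precisely the degree arising in the odd case (where $\deg D=2i$ and $g=2n+1$ likewise give $\deg M=2n-2i$). Hence the separation hypothesis $\deg M=2n-2i\geq 2(k-i-1)+1$ needed to invoke Corollary~\ref{cor: injective maps of chains for secant bundles over rational curves} is satisfied for exactly the same range $k\leq n$, and the induced maps of projectivized conormal bundles are identified with the very same secant-bundle maps $\alpha_{\bullet,\bullet}$ and $\beta_{\bullet}$ as before. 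With these identifications available, I would reproduce the five-step inductions of \S\ref{sec: proof of chain of abel jacobi maps} and \S\ref{sec: proof of chain of additions maps}, performing throughout the substitutions $C_{2j}\rightsquigarrow C_{2j+1}$, $U_{2j}\rightsquigarrow U_{2j+1}$, $\gamma_{i,j}\colon C_{2i}\times\bP^{j-i}\rightsquigarrow \gamma_{i,j}\colon C_{2i+1}\times\bP^{j-i}$, and $\delta_j\colon C_{2j}\rightsquigarrow \delta_j\colon C_{2j+1}$.

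The main obstacle, and essentially the only place requiring real attention, is that the base cases of the inductions change character. In odd genus the Abel-Jacobi chain begins with the embedding of a single point $\delta_0\colon C_0=\{ng^1_2\}\hookrightarrow \Jac(C)$ and the auxiliary chains begin with $\gamma_{0,t}\colon \bP^t\hookrightarrow C_{2t}$; in even genus the smallest Brill-Noether stratum $W^n_{g-1}(C)$ is one-dimensional, so these are replaced by $\delta_0\colon C_1=C\to\Jac(C)$, $p\mapsto ng^1_2\otimes\cO_C(p)$, and by $\gamma_{0,t}\colon C\times\bP^t\to C_{2t+1}$. I would check that both are closed embeddings of smooth varieties---the first being the ordinary Abel-Jacobi embedding of $C$, valid since $g=2n+2\geq 2$, and the second following from the even-genus analogue of Lemma~\ref{lemma: intersection}(a) (note that $U_1=C_1=C$, since a degree-one divisor has no degree-two subdivisor)---after which the inductive steps, which only manipulate the already-fixed conormal identifications together with the genus-independent lemmas, run exactly as in the odd case. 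This yields that both $\{\gamma_{i,j}\}$ and $\{\delta_j\}$ are NCD chains with the asserted maps of chains of centers, establishing Proposition~\ref{prop: injective maps for even genus}.
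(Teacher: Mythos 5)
Your proposal is correct and takes essentially the same route as the paper: in \S\ref{sec: even genus case} the paper likewise reduces Proposition \ref{prop: injective maps for even genus} to even-genus analogues of Lemma \ref{lemma: intersection}, Lemma \ref{lemma: conormal of addition maps} and Lemma \ref{lemma: conormal of abel jacobi maps} (stated there as Lemmas \ref{lemma: conormal of addition maps even genus}--\ref{lemma: conormal of abel jacobi maps even genus}, with proofs left to the reader) and then reruns the inductions of \S\ref{sec: proof of chain of abel jacobi maps}--\S\ref{sec: proof of chain of additions maps} verbatim. Your two explicit checks --- that $\deg M = 2n-2i$ coincides with the odd-genus value, so Corollary \ref{cor: injective maps of chains for secant bundles over rational curves} applies in the same range $k\leq n$, and that the base cases become the embeddings $C\hookrightarrow \Jac(C)$ and $C\times\bP^{t}\hookrightarrow C_{2t+1}$ (using $U_1=C$) --- are precisely the ``modifications for parity reasons'' the paper alludes to but does not spell out.
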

\noindent As in the proof of Proposition \ref{prop: chain of addition maps} and
Proposition \ref{prop: chain of Abel Jacobi maps}, the proof of Proposition
\ref{prop: injective maps for even genus} relies on a parallel statement for
Abel-Jacobi maps and conormal bundles as in Lemma \ref{lemma: intersection} and Lemma
\ref{lemma: conormal of addition maps}. We will only give the statements, and leave the proofs to the interested reader.

For each $j\geq 0$, denote
\[ U_{2j+1}\colonequals C_{2j+1}-\gamma_{j-1,j}(C_{2j-1}\times \bP^1).\]
Note that for $j=0$, we have $U_1=C$. Any divisor $D\in U_{2j+1}$ gives a degree $2j+1$ divisor on $\bP^1$ via the hyperelliptic map $C\to \bP^1$, which is denoted to be $h_{\ast}D$ and the associated line bundle is
\[ \cO_{\bP^1}(g-1-h_{\ast}D)\colonequals \cO_{\bP^1}(g-1)\otimes\cO_{\bP^1}(-h_{\ast}D).\]

\begin{lemma}
Let $C$ be a hyperelliptic curve of genus $g=2n+2$. Then:
\begin{enumerate}
      \item [(a)] for any $0\leq i<j$, the map $\gamma_{i,j}:C_{2i+1}\times \bP^{j-i} \to C_{2j+1}$ is an embedding when restricting to $U_{2i+1}\times \bP^{j-i}$ and for $0\leq \ell <i<j$ we have
    \[ \gamma_{i,j}^{-1}(U_{2\ell+1}\times \bP^{j-\ell})=U_{2\ell+1}\times \bP^{i-\ell}\times \bP^{j-i}.\]
    \item [(b)] for $0\leq j\leq n$, the map $\delta_{j}:C_{2j+1} \to \Jac(C)$ is an embedding when restricting to $U_{2j+1}$, and for $0\leq i<j$, we have
    \[ \delta_j^{-1}(U_{2i+1})=U_{2i+1}\times \bP^{j-i}.\]
    In particular, since $U_1=C$ we have
    \[ \delta_j^{-1}(C)=C\times \bP^{j}\subseteq C_{2j+1}.\]

\end{enumerate}

\end{lemma}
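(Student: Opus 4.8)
The plan is to follow the odd-genus arguments of Lemma \ref{lemma: intersection}, together with the conormal computations of Lemma \ref{lemma: conormal of addition maps}, Lemma \ref{lemma: conormal of abel jacobi maps}, Corollary \ref{corollary: conormal of addition maps} and Corollary \ref{corollary: conormal of abel jacobi maps}, essentially verbatim, replacing the even-degree symmetric products $C_{2i}$ by the odd-degree ones $C_{2i+1}$ and tracking the resulting parity shift in the degrees of the associated line bundles on $\bP^1$. The starting point is the unique decomposition valid on any hyperelliptic curve (see \cite[Page 13]{ACGH}): every effective divisor $D$ of degree $d \leq g$ with $h^0(\cO_C(D)) = r+1$ can be written uniquely as $D = E + \sum_{\ell=1}^{r}(p_\ell + q_\ell)$, where each $p_\ell + q_\ell$ is a hyperelliptic pair (a fiber of $h$) and $E$ is an effective divisor of degree $d-2r$ with $h^0(\cO_C(E)) = 1$; correspondingly any $L \in \Jac(C)$ with $h^0(L) = r+1$ decomposes uniquely as $L = r\, g^1_2 \otimes L'$ with $h^0(L') = 1$. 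This decomposition is insensitive to the parity of $g$, so it applies here without change.

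First I would extract the set-theoretic content. The decomposition shows at once that $\delta_j$ is injective on $U_{2j+1}$, with image the set of $L$ satisfying $h^0(L) = n-j+1$, and that $\gamma_{i,j}$ is injective on $U_{2i+1} \times \bP^{j-i}$, with image the divisors of degree $2j+1$ satisfying $h^0(\cO_C(D)) = j-i+1$. The preimage identities then follow by the same $h^0$-bookkeeping as in Lemma \ref{lemma: intersection}: a divisor in the image of $\gamma_{i,j}$ lies over $U_{2\ell+1}\times \bP^{j-\ell}$ precisely when its $h^0$ equals $j-\ell+1$, which forces the reduced part to have degree $2\ell+1$, hence $\gamma_{i,j}^{-1}(U_{2\ell+1}\times \bP^{j-\ell}) = U_{2\ell+1}\times \bP^{i-\ell}\times \bP^{j-i}$; the special case $\ell = 0$ together with $U_1 = C$ yields $\delta_j^{-1}(C) = C \times \bP^j$.

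To upgrade these to scheme-theoretic equalities I would prove the even-genus analogues of the conormal computations and then invoke Lemma \ref{lemma: criterion for checking scheme-theoretic pull back}. Using $\ctbl{C_m} \big\vert_D \cong H^0(C, \omega_C \otimes \cO_D)$ and pushing forward along $h$ with $\omega_C \cong h^{\ast}\cO_{\bP^1}(g-1)$, the splitting $h_{\ast}\omega_C(-D) \cong \omega_{\bP^1} \oplus \cO_{\bP^1}(g-1-h_{\ast}D)$ identifies $d\gamma_{i,j}$ and $d\delta_j$ over $U_{2i+1}$ (resp.\ $U_{2j+1}$) with the projection onto the $\omega_{\bP^1}$ summand, giving the needed surjectivity and computing the conormal fibers as $H^0(\bP^1, \cO_{\bP^1}(g-1-h_{\ast}D)\otimes \cO_E)$ and $H^0(\bP^1, \cO_{\bP^1}(g-1-h_{\ast}D))$. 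The only place where parity enters is that $h_{\ast}D$ now has odd degree $2i+1$ (resp.\ $2j+1$), so $\cO_{\bP^1}(g-1-h_{\ast}D)$ has even degree $2(n-i)$ (resp.\ $2(n-j)$); since $i<j\leq n$, this degree is still $\geq j-i$, so the bundle separates the $j-i$ points parametrized by $\bP^{j-i}$ and the conormal sheaf remains a genuine vector bundle.

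I expect the main, and essentially the only, obstacle to be notational bookkeeping rather than a new idea: one must re-index so that the reduced divisor $E$ carries odd degree and verify that every inequality of the form ``$\cO_{\bP^1}(g-1-h_{\ast}D)$ separates the relevant number of points'' survives the parity shift. Because the hyperelliptic decomposition and the splitting of $h_{\ast}\omega_C$ hold regardless of parity, and because the pushforward computation is formally identical, no genuinely new difficulty arises; the surjectivity of the conormal maps, and hence the scheme-theoretic preimages via Lemma \ref{lemma: criterion for checking scheme-theoretic pull back}, follow exactly as in the odd-genus case.
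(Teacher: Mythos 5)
Your proposal is correct and takes exactly the approach the paper intends: the paper leaves this even-genus lemma unproved (``we will only give the statements''), indicating that it parallels Lemma \ref{lemma: intersection} and the conormal computations of \S\ref{sec: calculation of conormal bundles}, and your argument --- the parity-independent hyperelliptic decomposition for the set-theoretic injectivity, image descriptions and preimage identities, followed by the pushforward splitting $h_{\ast}\omega_C(-D) \cong \omega_{\bP^1} \oplus \cO_{\bP^1}(g-1-h_{\ast}D)$ to get surjectivity of the (co)tangent and conormal maps, with the correct observation that $\cO_{\bP^1}(g-1-h_{\ast}D)$ still has even degree $2(n-i)$ so every separation/degree inequality survives --- is precisely that parallel argument carried out. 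One cosmetic point: what you actually need for the scheme-theoretic upgrade is the Nakayama step inside the proof of Lemma \ref{lemma: criterion for checking scheme-theoretic pull back} (set-theoretic equality plus surjectivity of $d\phi$ between conormal bundles), not that lemma as literally stated with its divisor collections, which is also how the paper's own proof of Lemma \ref{lemma: intersection} phrases the reduction.
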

\begin{lemma}\label{lemma: conormal of addition maps even genus}
For $0\leq i<j$, consider the map $\gamma_{i,j}:C_{2i+1}\times \bP^{j-i} \to C_{2j+1}$ and let $D\in U_{2i+1}$. Then
\begin{enumerate}
	\item [(a)] $d\gamma_{i,j}: \gamma_{i,j}^{\ast} \ctbl{C_{2j+1}} \to
		\ctbl{C_{2i+1}\times \bP^{j-i}}$ is surjective when restricted to $U_{2i+1}\times \bP^{j-i}$.
    \item [(b)] $\bP N^{\ast}_{\gamma_{i,j}}|_{U_{2i+1}\times \bP^{j-i}}$ is a smooth variety over $U_{2i+1}$ such that over $D\in U_{2i+1}$ we have an isomorphism
    \[\bP N^{\ast}_{\gamma_{i,j}}|_{\{D\}\times \bP^{j-i}}\cong B^{j-i-1}(\cO_{\bP^1}(g-1-h_{\ast}D)). \]

\end{enumerate}    
Furthermore, for $\ell<i$, consider the commutative diagram
\[ \begin{tikzcd}
(C_{2\ell+1}\times \bP^{i-\ell}) \times \bP^{j-i} \arrow[d,"\gamma_{\ell,i}\times \id"] \arrow[r,twoheadrightarrow,"\id\times r"] & C_{2\ell+1}\times \bP^{j-\ell}  \arrow[d,"\gamma_{\ell,j}"]\\
C_{2i+1}\times \bP^{j-i} \arrow[r,"\gamma_{i,j}"] & C_{2j+1}
\end{tikzcd}
\]
Here $r$ is the addition map. For any $D\in U_{2\ell+1}$, there is an induced map of conormal bundles on $\{D\}\times \bP^{i-\ell}\times \bP^{j-i}$:
    \[ \epsilon:(\id\times r)^{\ast}N^{\ast}_{\gamma_{\ell,j}}|_{\{D\}\times \bP^{j-\ell}} \to N^{\ast}_{(\gamma_{\ell,i}\times \id)}|_{\{D\}\times \bP^{i-\ell}\times \bP^{j-i}}.\]
Then:
\begin{enumerate}
    \item [(c)]  $\epsilon$ is surjective.
    \item [(d)] The following diagram commutes:
    \[ \begin{tikzcd}
    \bP N^{\ast}_{(\gamma_{\ell,i}\times \id)}|_{\{D\}\times \bP^{i-\ell}\times \bP^{j-i}} \arrow[r,"\alpha"] \arrow[d,"\cong"] & \bP N^{\ast}_{\gamma_{\ell,i}}|_{\{D\}\times \bP^{j-\ell}} \arrow[d,"\cong"] \\
    B^{i-\ell-1}(M)\times \bP^{j-i} \arrow[r,"\alpha_{i-\ell-1,j-\ell-1}"] & B^{j-\ell-1}(M)
    \end{tikzcd}\]
 Here $\alpha_{i-\ell-1,j-\ell-1}$ is the map \eqref{eqn: alpha_m,k} for the curve  $\bP^1$ and the line bundle  $M=\cO_{\bP^1}(g-1-h_{\ast}D)$; and $\alpha$ is the map induced by $\epsilon$ composed with a projection to $\bP N^{\ast}_{\gamma_{\ell,j}}(D\times \bP^{j-\ell})$.
\end{enumerate}
\end{lemma}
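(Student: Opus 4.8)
The plan is to transcribe the proofs of Lemma \ref{lemma: conormal of addition maps} and Corollary \ref{corollary: conormal of addition maps} essentially word for word, the only substantive change being that the effective divisors on $C$ now have \emph{odd} degree $2i+1$, $2j+1$, etc. All the structural inputs survive this change: the hyperelliptic decomposition \eqref{eqn: decomposition of line bundles} and the pushforward formula \eqref{eqn: canonical bundle and xi_D} are valid for any effective divisor of degree $\leq g$ on a hyperelliptic curve, and here our divisors have degree at most $2n+1 = g-1 < g = 2n+2$.

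For parts (a) and (b), I would fix $D \in U_{2i+1}$ and $E \in \bP^{j-i} \cong \Sym^{j-i}\bP^1$, so that $\gamma_{i,j}(D,E) = D + h^{\ast}E$ is an effective divisor of degree $2j+1$ on $C$. Using the canonical identification $\ctbl{C_{2j+1}}\big\vert_{D+h^{\ast}E} \cong H^0(C, \omega_C \otimes \cO_{D+h^{\ast}E})$ from \cite[Page 160]{ACGH}, together with the equality $h^0(\cO_C(D+h^{\ast}E)) = h^0(\cO_C(h^{\ast}E))$ (which follows from $D \in U_{2i+1}$ and the uniqueness of the hyperelliptic decomposition \eqref{eqn: decomposition of line bundles}), the same diagram chase as in Lemma \ref{lemma: conormal of addition maps} produces a short exact sequence
\[ 0 \to H^0(C, \omega_C(-D) \otimes \cO_{h^{\ast}E}) \to H^0(C, \omega_C \otimes \cO_{D+h^{\ast}E}) \to H^0(C, \omega_C \otimes \cO_D) \to 0, \]
whose middle term is $\ctbl{C_{2j+1}}\big\vert_{D+h^{\ast}E}$ and whose quotient is $\ctbl{U_{2i+1}}\big\vert_D$. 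Applying the pushforward formula $h_{\ast}\omega_C(-D) \cong \omega_{\bP^1} \oplus \cO_{\bP^1}(g-1-h_{\ast}D)$ (valid since $D \in U_{2i+1}$ guarantees $h_{\ast}\cO_D \cong \cO_{h_{\ast}D}$) identifies the kernel with $\ctbl{\bP^{j-i}}\big\vert_E \oplus H^0(\bP^1, \cO_{\bP^1}(g-1-h_{\ast}D) \otimes \cO_E)$, with the map to $\ctbl{\bP^{j-i}}\big\vert_E$ the first projection. This proves (a), and varying $E$ exhibits $N^{\ast}_{\gamma_{i,j}}\big\vert_{\{D\}\times\bP^{j-i}}$ as the secant bundle $B^{j-i-1}(\cO_{\bP^1}(g-1-h_{\ast}D))$ (Definition \ref{definition: secant bundles}); the required separation property of $\cO_{\bP^1}(g-1-h_{\ast}D)$ holds since $\deg = (g-1)-(2i+1) = 2(n-i) > j-i$ because $i<j\leq n$. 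This gives (b). Note that, in contrast to the odd-genus case, here there is no separate $i=0$ statement: since $U_1 = C$, a degree-$1$ divisor $D$ is simply a point of $C$, and the computation above applies uniformly.

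For parts (c) and (d), I would fix $D \in U_{2\ell+1}$ and set $M = \cO_{\bP^1}(g-1-h_{\ast}D)$. For $E_1 \in \bP^{i-\ell}$ and $E_2 \in \bP^{j-i}$, the fiberwise description of the conormal bundles from part (b) identifies the map $\epsilon$ over $(D, E_1, E_2)$ with the restriction map $H^0(\bP^1, M \otimes \cO_{E_1+E_2}) \to H^0(\bP^1, M \otimes \cO_{E_1})$, which is manifestly surjective, giving (c). The commutativity of the square in (d) is then immediate from the definition of the secant-bundle addition map $\alpha_{i-\ell-1,j-\ell-1}$ in \eqref{eqn: alpha_m,k}, exactly as in the proof of Corollary \ref{corollary: conormal of addition maps}. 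The only point deserving genuine attention is the verification that the hyperelliptic decomposition \eqref{eqn: decomposition of line bundles} and the formula \eqref{eqn: canonical bundle and xi_D} indeed go through for odd-degree divisors; both do, and once this is observed the remainder is bookkeeping identical to the odd-genus case, so I expect no real obstacle.
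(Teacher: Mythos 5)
Your proposal is correct and is exactly the argument the paper intends: for this lemma the paper gives no proof at all (\S\ref{sec: even genus case} explicitly leaves the proofs to the reader as routine adaptations), and the intended adaptation is precisely your transcription of the proofs of Lemma \ref{lemma: conormal of addition maps} and Corollary \ref{corollary: conormal of addition maps} with odd-degree divisors. Your two genuine checks -- that the hyperelliptic decomposition and the pushforward formula \eqref{eqn: canonical bundle and xi_D} only require $\deg D \leq g$ and no hyperelliptic pairs (both parity-independent), and that $\deg \cO_{\bP^1}(g-1-h_{\ast}D) = 2(n-i) > j-i$ so the relevant secant bundle is defined -- are the only points where the parity could have mattered, and you verified them correctly.
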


\begin{lemma}\label{lemma: conormal of abel jacobi maps even genus}
With the notation in Lemma \ref{lemma: conormal of addition maps even genus}. For $0\leq j\leq n$, consider the map $\delta_j: C_{2j+1} \to \Jac(C)$. Then:
\begin{enumerate}
	\item [(a)]$d\delta_{j}:\delta_j^{\ast} \ctbl{\Jac(C)} \to \ctbl{C_{2j+1}}$ is surjective when restricted to $U_{2j+1}$.
    \item [(b)] the fiber of $N^{\ast}_{\delta_{j}}$ over $D\in U_{2j+1}$ is $ H^0(\bP^1,\cO_{\bP^1}(g-1-h_{\ast}D))$.
\end{enumerate}    
Furthermore, consider the diagram for $i<j$,
\[ \begin{tikzcd}
C_{2i+1} \times \bP^{j-i} \arrow[d,"\gamma_{i,j}"] \arrow[r,twoheadrightarrow,"p_1"] & C_{2i+1} \arrow[d,"\delta_{i}"]\\
C_{2j+1} \arrow[r,"\delta_{j}"] & 
\Jac(C)
\end{tikzcd}
\]
Then for $D\in U_{2i+1}$, we get the induced map of conormal bundles over $\{D\}\times \bP^{j-i}$:
\[ \epsilon:p_1^{\ast}N^{\ast}_{\delta_{i}}|_D \to N^{\ast}_{\gamma_{i,j}}|_{\{D\}\times \bP^{j-i}},\]
and we have
\begin{enumerate}
   \item [(c)] $\epsilon$ is surjective.
   \item [(d)] The following diagram commutes:
   \[\begin{tikzcd}
   \bP N^{\ast}_{\gamma_{i,j}}|_{\{D\}\times \bP^{j-i}} \arrow[r,"\beta"] \arrow[d,"\cong"]& \bP N^{\ast}_{\delta_{i}}|_{D} \arrow[d,"\cong"] \\
     B^{j-i-1}(M) \arrow[r,"\beta_{j-i-1}"] & \bP H^0(\bP^1,M)
   \end{tikzcd} 
   \]
   The first vertical isomorphism comes from (b) of Lemma \ref{lemma: conormal of addition maps even genus} and $\beta_{j-i-1}$ is the map \eqref{eqn: beta} for the curve $\bP^1$ and the line bundle $M=\cO_{\bP^1}(g-1-h_{\ast}D)$. The map $\beta$ is induced from $\epsilon$ composed with a projection to $\bP N^{\ast}_{\delta_i}|_{D}$.
\end{enumerate}

\end{lemma}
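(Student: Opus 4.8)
The plan is to run the same two arguments used in the odd genus case, namely the proof of Lemma \ref{lemma: conormal of abel jacobi maps} for parts (a)--(b) and the proof of Corollary \ref{corollary: conormal of abel jacobi maps} for parts (c)--(d); the only thing that changes is the parity of the degrees, and the work consists in checking that the relevant numerical inequalities survive the shift. The crucial observation is that the pushforward formula \eqref{eqn: canonical bundle and xi_D}, that is $h_{\ast}\omega_C(-D) \cong \omega_{\bP^1} \oplus \cO_{\bP^1}(g-1-h_{\ast}D)$, continues to hold verbatim for any $D \in U_{2j+1}$: its derivation used only that $D$ contains no hyperelliptic pair, so that $h_{\ast}\cO_D \cong \cO_{h_{\ast}D}$, and this is exactly the defining property of $U_{2j+1}$.

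For parts (a) and (b), I would identify $\ctbl{\Jac(C)}$ with the trivial bundle of fiber $H^0(C, \omega_C)$ and identify $d\delta_j$ over $D \in C_{2j+1}$ with the restriction map $H^0(C, \omega_C) \to H^0(C, \omega_C \otimes \cO_D)$, as in \eqref{eqn: tangent map of Abel Jacobi}. By Serre duality the cokernel of this map is dual to $H^0(\cO_C) \to H^0(\cO_C(D))$, which is an isomorphism precisely when $h^0(\cO_C(D)) = 1$; for $D \in U_{2j+1}$ this holds by the unique decomposition of \eqref{eqn: decomposition of line bundles} applied to the odd-degree divisor $D$. Hence $d\delta_j$ is surjective on $U_{2j+1}$, giving (a), and its kernel is $N^{\ast}_{\delta_j}\vert_D \cong H^0(C, \omega_C(-D)) \cong H^0(\bP^1, \cO_{\bP^1}(g-1-h_{\ast}D))$ by the pushforward formula together with $h^0(\bP^1, \omega_{\bP^1}) = 0$, giving (b).

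For parts (c) and (d), I would fix $D \in U_{2i+1}$ and set $M = \cO_{\bP^1}(g-1-h_{\ast}D)$, which now has degree $(g-1)-(2i+1) = 2(n-i)$. Using (b) together with the identification of conormal fibers from Lemma \ref{lemma: conormal of addition maps even genus}(b), the fibrewise map $\epsilon\vert_{(D,E)}$ is identified with the restriction $H^0(\bP^1, M) \to H^0(\bP^1, M \otimes \cO_E)$ for $E \in \bP^{j-i}$. This is surjective because $H^1(\bP^1, M(-E)) = 0$: indeed $\deg M(-E) = 2(n-i) - (j-i) \geq n-i \geq 0$, since $i < j \leq n$. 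This gives (c), and the commutativity of the square with $\beta_{j-i-1}$ is then immediate from the definition of $\beta$ in \eqref{eqn: beta}, exactly as in the proof of Corollary \ref{corollary: conormal of abel jacobi maps}, giving (d).

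The only genuinely new point, and the step I would watch most carefully, is the parity bookkeeping: one must confirm that $\deg M = 2(n-i)$ is still large enough for the secant bundle $B^{j-i-1}(M)$ to be defined, i.e.\ that $M$ separates $j-i$ points on $\bP^1$, which holds since $j-i-1 < 2(n-i) = \deg M$, and large enough for the restriction maps above to stay surjective. Since all these inequalities come out with room to spare, no essential difficulty arises, which is why the details can indeed be safely left to the reader.
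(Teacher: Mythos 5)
Your proposal is correct and follows exactly the route the paper prescribes: in \S \ref{sec: even genus case} the authors state this lemma without proof, explicitly leaving it to the reader as a rerun of the odd-genus arguments (Lemma \ref{lemma: conormal of abel jacobi maps} and Corollary \ref{corollary: conormal of abel jacobi maps}), which is precisely what you carry out. Your parity bookkeeping is accurate --- \eqref{eqn: canonical bundle and xi_D} indeed only needs $D$ to contain no hyperelliptic pair, $\deg M = 2(n-i)$, the vanishing $H^1(\bP^1, M(-E))=0$ holds since $2(n-i)-(j-i)\geq 0$, and $M$ separates $j-i$ points --- so the proof is complete.
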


\section{Brill-Noether Stratifications are Whitney}\label{sec: Whitney stratification}
In this section, let $C$ be a smooth projective hyperelliptic curve of genus $g$. We show that the Brill-Noether stratification of $\Jac(C)$ determined by
\[\Jac(C)\supseteq \Theta=W_{g-1}(C) \supseteq W^{1}_{g-1}(C) \supseteq \ldots \supseteq W^{n}_{g-1}(C), \]
is a Whitney stratification
, where $n=\lfloor \frac{g-1}{2} \rfloor$. We will assume $g=2n+1$; the even genus case is similar.

\subsection{Whitney stratifications}

Let $Z$ be a smooth real manifold and let $X,Y\subseteq Z$ be two embedded smooth real sub-manifolds such that $X,Y$ are from a stratification of $Z$. Suppose $Y\subseteq \overline{X}$, where the closure is taken inside $Z$ with respect to the Euclidean topology. 

\begin{definition}
We say that the pair $(X,Y)$ satisfies the \emph{Whitney conditions} if for any point
$y\in Y$ the following two conditions hold: 
\begin{enumerate}
	 \item[(A)] If $\{x_i\}\subseteq X$ is a sequence of points converging to $y$, and
		 if the sequence of tangent spaces $T_{x_i}X$ converges to a linear space $T$ of
		 the same dimension, then $T_{y}Y\subseteq T$.
    \item[(B)] If $\{x_i\}\subseteq X$ and $\{y_i\}\subseteq Y$ are two sequences of
		 points that both converge to $y$, if the sequence of real secant lines
		 between $x_i$ and $y_i$ converges to a real line $L$, and if the sequence of
		 tangent spaces $T_{x_i}X$ converges to a linear subspace $T$ of the same
		 dimension, then $L\subseteq T$.
\end{enumerate}
\end{definition}

The Whitney condition (B) involves real secant lines (in local coordinates), and is
therefore not so easy to verify in practice. Instead, in the case of complex algebraic
varieties, there is a condition (W) introduced by Kuo \cite{Kuo} and Verdier
\cite{Verdier76}, which implies the Whitney conditions and is easier to work with
in our situation. It is proved by Teissier that, for complex analytic
stratifications, the Whitney conditions are equivalent to condition (W), but we will
not need this fact.

\begin{definition}[Distance]
Let $V$ be a complex vector space and let $A,B\subseteq V$ be two linear subspaces.
Fix an inner product $(-,-)$ on $V$. The \emph{distance from $A$ to $B$} is
defined to be
\[ d(A,B)\colonequals \sup_{\substack{a\in A,\\ a\neq 0}}\inf_{\substack{b\in B,\\b\neq 0}} \sin \theta(a,b).\]
Here $\theta(a,b)$ is the angle between two vectors $a,b$ determined by the inner product $(-,-)$.
\end{definition}
Here are some basic properties of $d(A,B)$. Note that it is \emph{not} symmetric in
$A$ and $B$ (which is why we don't call it the distance ``between'' $A$ and $B$).
\begin{fact}\label{fact about distance function}-
\begin{enumerate}
    \item $d(A,B)=0$ if and only if $A\subseteq B$.
    \item Let $A\subseteq A'$ be two subspaces, then $d(A,B)\leq d(A',B)$.
    \item Identify $V$ with the conjugate dual space $V^{\ast}$ via the inner product $(-,-)$ so that $\mathrm{ker} (V^{\ast}\to B^{\ast})$ is identified with the orthogonal complement $B^{\perp}$. Then    \[ d\left(\mathrm{ker} (V^{\ast}\to B^{\ast}),\mathrm{ker} (V^{\ast}\to A^{\ast})\right)=d(B^{\perp},A^{\perp})=d(A,B).\]
		 After choosing an orthonormal basis, this comes down to the fact that a linear
		 operator and its adjoint (between two finite dimensional Hilbert spaces) have
		 the same operator norm.
\end{enumerate}
\end{fact}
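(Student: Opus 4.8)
The plan is to rewrite the asymmetric distance $d(A,B)$ as an operator norm, after which all three assertions become short. Let $P_B$ and $P_{B^{\perp}}$ denote the orthogonal projections onto $B$ and its orthogonal complement $B^{\perp}$. Fixing a nonzero $a\in A$ and recalling that $\theta(a,b)$ is the Hermitian angle, so that $\cos\theta(a,b)=\abs{(a,b)}/(\|a\|\,\|b\|)$, one maximizes $\abs{(a,b)}/\|b\|$ over $b\in B$: since $(a,b)=(P_Ba,b)$ for $b\in B$, the Cauchy--Schwarz inequality gives the maximum $\|P_Ba\|$, attained at $b=P_Ba$ when $P_Ba\neq 0$. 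Using $\|a\|^2=\|P_Ba\|^2+\|P_{B^{\perp}}a\|^2$, this yields in all cases (including $a\in B^{\perp}$, where the value is $1$),
\[
	\inf_{\substack{b\in B,\\ b\neq 0}}\sin\theta(a,b)=\frac{\|P_{B^{\perp}}a\|}{\|a\|}.
\]
Taking the supremum over $a\in A$ gives the key identity
\[
	d(A,B)=\sup_{\substack{a\in A,\\ a\neq 0}}\frac{\|P_{B^{\perp}}a\|}{\|a\|}=\bigl\|P_{B^{\perp}}\big\vert_A\bigr\|,
\]
the operator norm of the restriction of $P_{B^{\perp}}$ to $A$. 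I expect this reformulation to be the only step needing genuine care, on account of the Hermitian angle and the degenerate case $a\perp B$; everything afterwards is formal.

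Granting the identity, part (1) is immediate: $\bigl\|P_{B^{\perp}}\big\vert_A\bigr\|=0$ precisely when $P_{B^{\perp}}$ annihilates $A$, that is, when $A\subseteq\Ker P_{B^{\perp}}=B$. Part (2) follows from the monotonicity of the operator norm under enlarging the domain: if $A\subseteq A'$, the supremum defining $\bigl\|P_{B^{\perp}}\big\vert_{A'}\bigr\|$ ranges over a larger set, so $d(A,B)\leq d(A',B)$.

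For part (3), the first equality is a matter of definitions: the conjugate-linear Riesz identification $V\cong V^{\ast}$ carries the kernel of $V^{\ast}\to B^{\ast}$ (the functionals vanishing on $B$) onto $B^{\perp}$, and likewise for $A$, so that $d\bigl(\Ker(V^{\ast}\to B^{\ast}),\Ker(V^{\ast}\to A^{\ast})\bigr)=d(B^{\perp},A^{\perp})$. For the second equality $d(B^{\perp},A^{\perp})=d(A,B)$, I would apply the operator-norm formula to each side; since $(A^{\perp})^{\perp}=A$, this reads $\bigl\|P_A\big\vert_{B^{\perp}}\bigr\|=\bigl\|P_{B^{\perp}}\big\vert_A\bigr\|$. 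The two maps $P_{B^{\perp}}\big\vert_A:A\to B^{\perp}$ and $P_A\big\vert_{B^{\perp}}:B^{\perp}\to A$ are mutually adjoint, because the self-adjointness of orthogonal projections gives $(P_{B^{\perp}}a,c)=(a,c)=(a,P_Ac)$ for all $a\in A$ and $c\in B^{\perp}$. Since an operator and its adjoint between finite-dimensional Hilbert spaces have equal operator norm, the two restricted projections do as well, which is exactly the desired equality.
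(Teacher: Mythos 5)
Your proof is correct, and it follows essentially the same route the paper indicates: the asymmetric distance is recognized as the operator norm of the restricted projection $P_{B^{\perp}}\big\vert_A$, after which (1) and (2) are immediate and (3) reduces to the equality of the norms of an operator and its adjoint — exactly the fact the paper invokes, with your identification of $P_{B^{\perp}}\big\vert_A$ and $P_A\big\vert_{B^{\perp}}$ as mutual adjoints replacing the paper's "choose an orthonormal basis" phrasing. Your treatment of the degenerate case $a\perp B$ and of the conjugate-linear Riesz identification fills in details the paper leaves implicit, but introduces no new idea beyond its sketch.
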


From now on, let $Z$ be a complex manifold. Let $X,Y$ be two embedded smooth complex submanifolds of $Z$ such that $Y\subseteq \overline{X}$.
\begin{definition}
We say that the pair $(X,Y)$ satisfies \emph{Condition $(W)$} if for any point $y\in
Y$, and for any sequence of points $\{x_i\}\subseteq X$ converging to $y$, there
exists a constant $C>0$ such that 
\[ d(T_{y}Y,T_{x_i}X)\leq C\cdot d(y,x_i), \quad \forall i,\]
where we view $T_{x_i}X$ as a subspace of $T_yZ$ using a local trivialization of the tangent bundle $T_Z$, and $d(y,x_i)$ is the Euclidean distance between $y$ and $x_i$ in a local coordinate chart.
\end{definition}

Kuo \cite{Kuo} (see also Verdier \cite[Théorème 1.5]{Verdier76}) proved the
following.

\begin{thm}\label{prop: whitney condtions A and B are equivalent to W}
Let $Z$ be a complex manifold. Let $X,Y$ be two embedded smooth complex submanifolds of $Z$ such that $Y\subseteq \overline{X}$ and $X,Y$ are from a stratification of $Z$. If the pair $(X,Y)$ satisfies Condition $(W)$, then the pair $(X,Y)$ satisfies the Whitney conditions $(A),(B)$.
\end{thm}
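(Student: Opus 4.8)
The plan is to deduce each Whitney condition from Condition $(W)$ using only the elementary properties of the distance function $d$ recorded in Fact~\ref{fact about distance function}, together with one genuinely analytic input — the integration of the Lipschitz estimate in $(W)$ along paths in $X$ — which is where the real difficulty lies. Throughout I use the standard locally uniform form of Condition $(W)$, i.e. a single constant $C$ valid for all $x\in X$ and $y\in Y$ in a neighborhood of the given point, and the continuity of $(A,B)\mapsto d(A,B)$ on the product of Grassmannians (so that $d(A,B_i)\to d(A,B)$ whenever $B_i\to B$).

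Condition $(A)$ is almost immediate. Fix $y\in Y$ and a sequence $x_i\to y$ in $X$ with $T_{x_i}X\to T$ in the Grassmannian $\mathrm{Gr}(\dim X, T_yZ)$ (the hypothesis $\dim T=\dim X$ keeps us in a fixed Grassmannian). By continuity, $d(T_yY,T_{x_i}X)\to d(T_yY,T)$, while Condition $(W)$ forces $d(T_yY,T_{x_i}X)\le C\,d(y,x_i)\to 0$. Hence $d(T_yY,T)=0$, and Fact~\ref{fact about distance function}(1) gives $T_yY\subseteq T$, which is Condition $(A)$.

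For Condition $(B)$, fix $y\in Y$ and sequences $x_i\to y$ in $X$ and $y_i\to y$ in $Y$ with secant lines $\overline{x_iy_i}\to L$ and $T_{x_i}X\to T$. Put $v_i=x_i-y_i$ and $\ell_i=v_i/\lvert v_i\rvert\to\ell$, so $L=\C\ell$. Since $d$ is continuous and $T_{x_i}X\to T$, it suffices to show $d(\C\ell_i,T_{x_i}X)\to 0$; then $d(\C\ell,T)=0$ and Fact~\ref{fact about distance function}(1) yields $L\subseteq T$. Choosing coordinates in which $Y$ is an affine subspace near $y$ (Condition $(W)$ is invariant under local $C^1$ changes of coordinates), decompose $v_i=v_i^{\parallel}+v_i^{\perp}$ with $v_i^{\parallel}\in T_yY$ and $v_i^{\perp}\perp T_yY$; here $v_i^{\perp}=x_i^{\perp}$ is the component of $x_i$ normal to $Y$, of length $\rho_i:=\operatorname{dist}(x_i,Y)\le\lvert v_i\rvert$. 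Applying Condition $(W)$ at the foot $p_i\in Y$ of $x_i$ (where $T_{p_i}Y=Y$ is constant) together with Fact~\ref{fact about distance function}(2), the vector $v_i^{\parallel}\in Y$ satisfies $\operatorname{dist}(v_i^{\parallel},T_{x_i}X)\le C\lvert v_i\rvert^2$. Writing $\pi$ for orthogonal projection onto $N_{x_i}X$, one gets $d(\C\ell_i,T_{x_i}X)=\lvert\pi(v_i)\rvert/\lvert v_i\rvert\le \sin\theta_i+C\lvert v_i\rvert$, where $\theta_i$ is the angle between $x_i^{\perp}$ and $T_{x_i}X$. The whole statement is thereby reduced to the \emph{core claim}: the unit normal direction $\nu(x)=x^{\perp}/\lvert x^{\perp}\rvert$ becomes asymptotically tangent to $X$, i.e. $\theta_i\to 0$.

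The core claim is the main obstacle. The natural reformulation is via Fact~\ref{fact about distance function}(3): Condition $(W)$ in the form $d(Y,T_xX)\le C\rho(x)$ (with $\rho(x)=\operatorname{dist}(x,Y)$) becomes $d(N_xX,Y^{\perp})\le C\rho(x)$, so that every unit normal to $X$ lies within angle $C\rho(x)$ of $Y^{\perp}$. In particular $T_xX$ asymptotically contains $Y$, the projection $\pi|_X:X\to Y$ is a submersion near $y$, and the claim concerns how the fibers of $\pi|_X$ approach $Y$. I would prove it by integrating the pointwise Lipschitz bound of $(W)$ along curves in $X$ descending toward $Y$: this bound controls the total rotation of the spaces $T_xX$ (equivalently $N_xX$) in terms of $\rho$, and forces the radial/normal direction $\nu(x)$ to line up with $T_xX$ in the limit. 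Converting the pointwise inequality $(W)$ into this asymptotic tangency is exactly the technical heart of the arguments of Kuo \cite{Kuo} and Verdier \cite{Verdier76}, and is the step I expect to be hardest. Granting the core claim, $\theta_i\to 0$ gives $d(\C\ell_i,T_{x_i}X)\to 0$, and continuity of $d$ with $T_{x_i}X\to T$ yields $L\subseteq T$, establishing Condition $(B)$.
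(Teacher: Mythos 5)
The paper offers no proof of this statement at all: it is quoted as a theorem of Kuo \cite{Kuo} and Verdier \cite[Th\'eor\`eme 1.5]{Verdier76}, and the citation is the paper's entire treatment. So the question is whether your proposal stands on its own, and it does not. Your proof of Condition (A) is complete and correct, and your reduction of Condition (B) is the standard one: flatten $Y$ holomorphically, split the secant vector $v_i$ into a part parallel to $T_yY$ (controlled directly by Condition (W) together with Fact \ref{fact about distance function}(2)) and a part normal to $Y$. But the resulting ``core claim'' --- that the angle $\theta_i$ between the normal direction $\nu(x_i)=x_i^{\perp}/\lvert x_i^{\perp}\rvert$ and $T_{x_i}X$ tends to $0$ --- is exactly Condition (B) for the particular sequences $y_i=p_i$ given by orthogonal projection, and it is where essentially the entire content of the Kuo--Verdier theorem is concentrated. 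You leave it unproven and explicitly defer it to ``the technical heart of the arguments of Kuo and Verdier,'' i.e., to the theorem being proved. That is a genuine gap: everything you do prove is routine, and the one nontrivial step is outsourced.

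The sketch you give for the core claim would also need real repair, because Condition (W) bounds $d(T_yY,T_xX)$ --- the gap between the \emph{fixed} tangent space of $Y$ and the tangent spaces of $X$ --- and says nothing directly about how fast $T_xX$ rotates as $x$ moves along a curve in $X$; so ``integrating the pointwise Lipschitz bound'' to control ``the total rotation of the spaces $T_xX$'' is not an operation the hypothesis licenses as stated. The actual proofs (Kuo's ratio test, or Verdier's analysis of the fibers of the local projection $X\to Y$ and the vector fields lifted along them) do substantive work precisely at this point. A smaller discrepancy: you assume from the outset the two-variable locally uniform form of (W) (one constant valid for all $x\in X$ and all $y'\in Y$ near $y$), which is Verdier's formulation but is a priori stronger than the paper's definition, where the distance is measured from the fixed point $y$ and the constant may depend on the sequence; your bound $\operatorname{dist}(v_i^{\parallel},T_{x_i}X)\leq C\lvert v_i\rvert^2$ uses the stronger form, although for the limiting argument the weaker bound $C\lvert v_i\rvert\, d(y,x_i)$ obtainable from the paper's definition would have sufficed. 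Either prove this equivalence or flag the strengthened hypothesis; as written the mismatch is silent.
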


We are going to use this result in the following form.

\begin{lemma}\label{lemma: the strong condition W'}
Same assumptions as above. Assume the pair $(X,Y)$ satisfies the Whitney condition $(A)$. 
Let $y \in Y$ be an arbitrary point, and let $\{x_i\}\subseteq X$ be a sequence of points
converging to $y$ such that $T=\lim_{i\to \infty}T_{x_i}X$ exists. Suppose that there is a
constant $C >0$ (which is allowed to depend on $y$ and $x_i$) such that
\begin{equation}\label{eqn: the condition W'}
    d(T,T_{x_i}X)\leq C\cdot d(y,x_i), \quad \forall i;
\end{equation} \label{eqn: the condition W', dual statement}
an equivalent formulation is that there is a constant $C > 0$ such that
\begin{equation}
    d((N^{\ast}_{X|Z})_{x_i},\lim_{i\to \infty}(N^{\ast}_{X|Z})_{x_i})\leq C\cdot
	 d(y,x_i), \quad \forall i,
\end{equation} 
where $N^{\ast}_{X|Z}$ denotes the conormal bundle of $X$ inside $Z$.
Then the pair $(X,Y)$ satisfies the Whitney condition $(B)$.
\end{lemma}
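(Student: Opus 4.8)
The plan is to deduce the Whitney condition (B) from condition (W) by means of Theorem \ref{prop: whitney condtions A and B are equivalent to W}; thus it suffices to show that the hypotheses of the lemma --- condition (A) together with the estimate \eqref{eqn: the condition W'} --- force condition (W) to hold. First I would record the equivalence of the two displayed forms of the hypothesis. Given a sequence $\{x_i\}\subseteq X$ converging to $y$ with $T=\lim_{i\to\infty}T_{x_i}X$, applying Fact \ref{fact about distance function}(3) with $A=T$ and $B=T_{x_i}X$ identifies $d(T,T_{x_i}X)$ with $d(B^{\perp},A^{\perp})$, where $B^{\perp}=(N^{\ast}_{X|Z})_{x_i}$ and $A^{\perp}=T^{\perp}=\lim_{i\to\infty}(N^{\ast}_{X|Z})_{x_i}$ (the orthogonal complement is continuous on the Grassmannian, so limits commute with passing to conormals). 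This yields the stated equivalence between \eqref{eqn: the condition W'} and its dual conormal form.

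Next I would verify condition (W). Fix $y\in Y$ and an arbitrary sequence $\{x_i\}\subseteq X$ with $x_i\to y$; one must produce a constant $C$ with $d(T_yY,T_{x_i}X)\le C\cdot d(y,x_i)$ for $i\gg 0$. Suppose this fails. Then, after passing to a subsequence, the ratio $d(T_yY,T_{x_i}X)/d(y,x_i)$ tends to $+\infty$. Since the relevant Grassmannian is compact, I can pass to a further subsequence along which $T_{x_i}X$ converges to a limit $T$ of the same dimension. Condition (A) then gives $T_yY\subseteq T$, and the monotonicity recorded in Fact \ref{fact about distance function}(2) yields
\[ d(T_yY,T_{x_i}X)\le d(T,T_{x_i}X).\]
Applying the hypothesis \eqref{eqn: the condition W'} to this convergent subsequence bounds the right-hand side by $C\cdot d(y,x_i)$, so the ratio stays bounded --- contradicting its divergence. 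Hence condition (W) holds, and Theorem \ref{prop: whitney condtions A and B are equivalent to W} supplies condition (B).

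The one genuine subtlety, and the step I would be most careful about, is that condition (W) must hold with a single constant for \emph{every} sequence $x_i\to y$, whereas the hypothesis \eqref{eqn: the condition W'} is assumed only along sequences whose tangent spaces converge. This is exactly why the argument is phrased as a proof by contradiction followed by extraction of a convergent subsequence: the only way (W) can fail is along a subsequence, and every subsequence of $\{x_i\}$ admits a further subsequence with convergent tangent directions by compactness of the Grassmannian. I would also emphasize that condition (A) enters at the innocuous-looking but essential inclusion $T_yY\subseteq T$, without which the monotonicity comparison would be unavailable; everything else is the formal behaviour of the (non-symmetric) distance $d(-,-)$ from Fact \ref{fact about distance function}.
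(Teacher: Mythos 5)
Your proof is correct and follows essentially the same route as the paper's: Whitney condition (A) gives $T_yY\subseteq T$, the monotonicity in Fact \ref{fact about distance function}(2) then yields $d(T_yY,T_{x_i}X)\le d(T,T_{x_i}X)\le C\cdot d(y,x_i)$, which verifies condition (W) and hence condition (B) via Theorem \ref{prop: whitney condtions A and B are equivalent to W}, with Fact \ref{fact about distance function}(3) handling the dual conormal formulation. The only difference is that you make explicit, via the contradiction-and-subsequence argument using compactness of the Grassmannian, the passage from sequences with convergent tangent spaces to arbitrary sequences required by condition (W) --- a quantifier issue the paper's one-line proof leaves implicit, and which you were right to flag as the genuine subtlety.
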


\begin{proof}
By Whitney condition $(A)$, we know that $T_{y}Y\subseteq T$. By the property (2) of the distance function  in Fact \ref{fact about distance function}, we conclude that 
\[d(T_{y}Y,T_{x_i}X)\leq d(T,T_{x_i}X)\leq C\cdot d(y,x_i).\]
This verifies the Condition (W) and thus gives the Whitney condition (B) by Theorem \ref{prop: whitney condtions A and B are equivalent to W}. The last statement uses the property (3) of the distance function in Fact \ref{fact about distance function}.
\end{proof}
\begin{definition}\label{definition: a stratification is Whitney}
Let $X$ be a complex algebraic variety and suppose there is a finite algebraic
stratification
\[X = \bigsqcup S_i\]
by connected algebraic varieties whose irreducible components are smooth.
We say this is a Whitney stratification if for any $S_j\subseteq \overline{S_i}$, the pair $(S_i,S_j)$ satisfies the Whitney conditions (A) and (B).
\end{definition}

\subsection{Brill-Noether stratifications are Whitney}
Recall that $C$ is a genus $2n+1$ smooth hyperelliptic curve. For each $0\leq r\leq n$, denote
\[ W^r_{g-1}(C)^{\circ}\colonequals W^r_{g-1}(C)-W^{r+1}_{g-1}(C),\]
which is a connected smooth algebraic variety, and parametrizes degree $g-1$ line bundles with exactly $r+1$ independent sections. The subvariety $\Jac(C)-\Theta$ is also smooth and parametrizes degree $g-1$ lined bundles with no sections. The Brill-Noether stratification of $\Jac(C)$ is defined to be
\[ \Jac(C)=(\Jac(C)-\Theta)\sqcup \bigsqcup_{0\leq r\leq n} W^r_{g-1}(C)^{\circ}. \]
\begin{prop}
The Brill-Noether stratification of $\Jac(C)$ is a Whitney stratification.
\end{prop}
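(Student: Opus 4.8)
The plan is to verify Condition $(W)$ for every pair of strata; by Theorem \ref{prop: whitney condtions A and B are equivalent to W} this yields the Whitney conditions $(A)$ and $(B)$, hence the proposition, so I would not try to check $(A)$ and $(B)$ separately. Since $\Jac(C)$ is an abelian variety its cotangent bundle is canonically trivial, and I would identify $T^{\ast}_L\Jac(C)\cong H^0(C,\omega_C)\cong H^0(\bP^1,\cO_{\bP^1}(2n))=:V$ for every $L$, using $\omega_C\cong h^{\ast}\cO_{\bP^1}(2n)$ and the push‑forward $h_{\ast}$; the distance function $d$ is then computed in the fixed space $V$. For a pair in which the larger stratum is the open set $\Jac(C)-\Theta$ the conormal space is $0$, so $(W)$ is trivial by Fact \ref{fact about distance function}(1); thus the only pairs to treat are $(W^r_{g-1}(C)^{\circ},\,W^s_{g-1}(C)^{\circ})$ with $r<s$.

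By Fact \ref{fact about distance function}(3), Condition $(W)$ for such a pair is equivalent to the estimate $d(N^{\ast}_{x_i},N^{\ast}_{y})\le C\,d(y,x_i)$ for every sequence $x_i\in W^r_{g-1}(C)^{\circ}$ converging to $y\in W^s_{g-1}(C)^{\circ}$, where $N^{\ast}_{x_i},N^{\ast}_y\subseteq V$ are the conormal spaces of the two strata. Writing $x_i=\delta_{n-r}(D_i)$ with $D_i\in U_{2(n-r)}$ and $y=\delta_{n-s}(E)$ with $E\in U_{2(n-s)}$, Lemma \ref{lemma: conormal of abel jacobi maps} and \eqref{eqn: kernel of conormal of delta j} identify
\[
N^{\ast}_{x_i}=K(h_{\ast}D_i),\qquad N^{\ast}_y=K(h_{\ast}E),\qquad K(\Delta):=H^0\bigl(\bP^1,\cO_{\bP^1}(2n-\Delta)\bigr)\subseteq V,
\]
the forms vanishing on the divisor $\Delta\subseteq\bP^1$. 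Setting $D':=\lim_i D_i$ we have $\delta_{n-r}(D')=y$, so $D'$ lies on the contracted fibre $F:=\delta_{n-r}^{-1}(y)=|D'|\cong\bP^{\,s-r}$. The decisive structural fact, which I would establish first, is that $E$ is the fixed part of this linear system: every $D''\in F$ has the form $D''=E+h^{\ast}G$, whence $h_{\ast}D''=h_{\ast}E+2G\ge h_{\ast}E$ and therefore $K(h_{\ast}D'')\subseteq K(h_{\ast}E)=N^{\ast}_y$.

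The estimate then splits into an upper and a lower bound, both measured against $d(D_i,F)$. For the upper bound I would pick the nearest point $D''_i\in F$ and use that $\Delta\mapsto K(\Delta)$ and $h_{\ast}$ are morphisms, hence locally Lipschitz: for a unit $v\in K(h_{\ast}D_i)$ there is $w\in K(h_{\ast}D''_i)\subseteq N^{\ast}_y$ with
\[
|v-w|\le d\bigl(K(h_{\ast}D_i),K(h_{\ast}D''_i)\bigr)\lesssim d_{\Sym}(h_{\ast}D_i,h_{\ast}D''_i)\lesssim d(D_i,D''_i)=d(D_i,F),
\]
so that $\operatorname{dist}(v,N^{\ast}_y)\le|v-w|\lesssim d(D_i,F)$ and hence $d(N^{\ast}_{x_i},N^{\ast}_y)\lesssim d(D_i,F)$. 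For the lower bound I would use that the differential of the Abel--Jacobi map at any $D''\in F$ has kernel exactly $T_{D''}F$ (the computation $\ker d(\mathrm{AJ})_{D''}=T_{D''}|D''|$ from \cite[Chapter IV]{ACGH}); since $\delta_{n-r}$ is constant along $F$, expanding around $D''_i$ gives $d(y,x_i)=\bigl|\delta_{n-r}(D''_i+\tau_i)-\delta_{n-r}(D''_i)\bigr|\ge c\,|\tau_i|$ with $|\tau_i|=d(D_i,F)$, the constant being uniform by compactness of $F$. Combining the two gives $d(N^{\ast}_{x_i},N^{\ast}_y)\lesssim d(D_i,F)\lesssim d(y,x_i)$, which is $(W)$.

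The place I expect the real work is the interaction of these estimates with the contraction of $F$, and the corresponding point to stress is that the naive criterion of Lemma \ref{lemma: the strong condition W'} must \emph{not} be used here: it compares $T_{x_i}X$ with its own limit $\lim_iT_{x_i}X$, dually $K(h_{\ast}D')=K(h_{\ast}E+2G)\subsetneq N^{\ast}_y$, and this sufficient condition genuinely fails, because moving the hyperelliptic–pair part of $D_i$ along $F$ changes $\lim_iN^{\ast}_{x_i}$ to first order while leaving $\delta_{n-r}(D_i)$ essentially fixed. It is precisely the containment $K(h_{\ast}D'')\subseteq K(h_{\ast}E)$ coming from the fixed part $E$ that absorbs these fibre directions and makes the genuine Condition $(W)$ (with $N^{\ast}_y$, not $\lim_iN^{\ast}_{x_i}$) hold; this is what lets the argument proceed uniformly and avoid a case analysis over the degeneration loci of $D'$ (coincident pair points, Weierstrass points, or a pair meeting $E$). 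I would therefore organize the write‑up around verifying $(W)$ directly via the nearest fibre point $D''_i$, and around checking the Lipschitz and transverse‑immersivity estimates uniformly along the compact fibre $F$.
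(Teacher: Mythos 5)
Your proposal is correct, and it is not merely a different route from the paper's proof --- it repairs what appears to be a genuine gap in it. The paper proves condition (A) via the containment $\lim_k N^{\ast}_{x_k}=K(h_{\ast}D')\subseteq K(h_{\ast}E)=N^{\ast}_y$ and then invokes Lemma \ref{lemma: the strong condition W'}, reducing (B) to the estimate $d\bigl(\lim_k N^{\ast}_{x_k},N^{\ast}_{x_k}\bigr)\leq A\cdot d(y,x_k)$, which it deduces from the Lipschitz property of $\Delta\mapsto K(\Delta)$ \emph{together with} the displayed comparability of $d(y,x_k)$ with $d(D',D_k)$ (written there as ``$A\cdot d(L,L_k)=d(\overline{D},D_k)$''). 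That comparability is false exactly because $\delta_{n-r}$ contracts the fibre $F\cong\bP^{s-r}$: for your drifting sequences $D_k=\exp_{D''_k}(\tau_k)$ with $D''_k=E+h^{\ast}G_k\in F$, $d(G_k,G')\sim k^{-1/2}$ and $|\tau_k|\sim k^{-1}$ normal to $F$, one has $d(y,x_k)\lesssim|\tau_k|\sim k^{-1}$ while $d(D',D_k)\sim k^{-1/2}$; and since $G\mapsto K(h_{\ast}E+2G)$ is an injective immersion of $\bP^{s-r}$ into the Grassmannian, also $d\bigl(N^{\ast}_{x_k},\lim_k N^{\ast}_{x_k}\bigr)\gtrsim k^{-1/2}$. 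So the hypothesis \eqref{eqn: the condition W'} of Lemma \ref{lemma: the strong condition W'} genuinely fails for every pair of strata with $s>r$, confirming your diagnosis that this criterion must not be used. Your fix --- verifying Condition (W) directly against the full conormal $N^{\ast}_y$, using that $K(h_{\ast}D'')\subseteq N^{\ast}_y$ for \emph{every} point $D''$ of the contracted fibre rather than only for the limit $D'$ --- is precisely what absorbs the fibre directions, and Theorem \ref{prop: whitney condtions A and B are equivalent to W} then yields both (A) and (B) at once.

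Your two estimates are sound, and at the same level of rigour as the paper's own argument. The upper bound needs only the fixed-part identity $h_{\ast}(E+h^{\ast}G)=h_{\ast}E+2G$ and the Lipschitz property of the morphisms $C_{g-1-2r}\to\Sym^{g-1-2r}\bP^1\to\mathrm{Grass}$ between compact manifolds, which is the same fact the paper relegates to a footnote. The lower bound $d(y,x_i)\geq c\,d(D_i,F)$ holds because $\Ker\,d(\delta_{n-r})_{D''}=T_{D''}F$ at every $D''\in F$ (the kernel has dimension $h^0(\cO_C(D''))-1=\dim|D''|$ and contains $T_{D''}|D''|$), so $d\delta_{n-r}$ is uniformly injective on the normal directions along the compact fibre and a second-order Taylor expansion gives the claim; the resulting constant in (W) depends on $y$, which the paper's pointwise definition of Condition (W) permits. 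Two small points for the write-up: pass to a subsequence before defining $D'=\lim_iD_i$ (or avoid $D'$ altogether by defining $F=\delta_{n-r}^{-1}(y)$ intrinsically and noting $d(D_i,F)\to0$ by properness of $\delta_{n-r}$), and the nearest-point decomposition $D_i=\exp_{D''_i}(\tau_i)$ is only available once $D_i$ lies in a tubular neighbourhood of $F$, i.e.\ for $i\gg0$.
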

\begin{proof}
Note that $\overline{W^i_{g-1}(C)^{\circ}}=W^i_{g-1}(C)$ and for $i<j$, we have
\[ W^j_{g-1}(C)^{\circ}\subseteq W^j_{g-1}(C)\subseteq W^{i}_{g-1}(C).\]
We also have $\overline{\Jac(C)-\Theta}=\Jac(C)$. By Definition \ref{definition: a
stratification is Whitney}, it suffices to show that for each $i<j$, the pair
$(W^i_{g-1}(C)^{\circ},W^j_{g-1}(C)^{\circ})$ satisfies the Whitney conditions,  and the same holds for the pair $(\Jac(C)-\Theta,W^i_{g-1}(C)^{\circ})$. To apply Lemma \ref{lemma: the strong condition W'}, we need to understand the conormal bundles of the Brill-Noether strata. Recall that for each $0\leq r\leq n$, the Abel-Jacobi map $\delta_{(g-1-2r)/2}=\delta_{n-r}$ induces an isomorphism 
\begin{equation}\label{eqn: the isomorphism between U and open part of BrillNoether}
\delta_{(g-1-2r)/2}:U_{g-1-2r} \xrightarrow{\sim} W^r_{g-1}(C)^{\circ}, \quad D\mapsto \cO_C(D)\otimes rg^1_2,
\end{equation}
where $U_{g-1-2r}$ is defined in Notation \ref{notation: open part of symmetric product U2j} and is the open subset of $C_{g-1-2r}$ consisting of divisors $D$ such that $h^0(C,\cO_C(D))=1$. By Lemma \ref{lemma: conormal of abel jacobi maps}, for any $D\in U_{g-1-2r}$ and $L\colonequals \cO_C(D)\otimes rg^1_2$, we have
\begin{align*} 
(N^{\ast}_{W^r_{g-1}(C)^{\circ}|\Jac(C)})_{L}=(N^{\ast}_{\delta_{(g-1-2r)/2}})_{D}=H^0(C,\omega_C(-D))\cong H^0(\bP^1,\cO_{\bP^1}(g-1-h_{\ast}D))
\end{align*}
where the last isomorphism is induced by $h:C\to \bP^1$, the hyperelliptic map determined by the unique $g^1_2$ and $h_{\ast}D$ is the degree $g-1-2r$ divisor defined in Notation \ref{notation: associated divisors on P1}.

For each $i<j$, let $\{L_k\}\subseteq W^i_{g-1}(C)^{\circ}$ be a sequence of line bundles converging to $L\in W^j_{g-1}(C)^{\circ}$. Using the isomorphism \eqref{eqn: the isomorphism between U and open part of BrillNoether}, we can write
\[ L_k=\cO_{C}(D_k)\otimes ig^1_2, \quad L=\cO_{C}(D)\otimes jg^1_2\]
such that $D_k\in U_{g-1-2i}$ and $D\in U_{g-1-2j}$. From the discussion above, we know that
\begin{align*}
    (N^{\ast}_{W^i_{g-1}(C)^{\circ}|\Jac(C)})_{L_k}=H^0(C,\omega_C(-D_k))\cong H^0(\bP^1,\cO_{\bP^1}(g-1-h_{\ast}D_k))\\
    (N^{\ast}_{W^j_{g-1}(C)^{\circ}|\Jac(C)})_{L}=H^0(C,\omega_C(-D))\cong H^0(\bP^1,\cO_{\bP^1}(g-1-h_{\ast}D))
\end{align*} 
If we denote $\overline{D}\colonequals \lim_{k\to \infty}D_k \in C_{g-1-2i}$ to be the limit divisor, since $\lim_{k\to \infty}L_k=L$, we see that $D$ is an effective subdivisor of $\overline{D}$. Therefore,
\begin{align*}
    \lim_{k\to \infty}(N^{\ast}_{W^i_{g-1}(C)^{\circ}|\Jac(C)})_{L_k}=&H^0(\bP^1,\cO_{\bP^1}(g-1-h_{\ast}\overline{D}))\\ 
    \subseteq &H^0(\bP^1,\cO_{\bP^1}(g-1-h_{\ast}D))=(N^{\ast}_{W^j_{g-1}(C)^{\circ}|\Jac(C)})_{L},
\end{align*}
where the first equality uses the fact that $H^1(\bP^1,\cO_{\bP^1}(k))=0$ for any $k\geq 0$ and hence we can take limits. This verifies the Whitney condition (A) for the pair
$(W^i_{g-1}(C)^{\circ},W^j_{g-1}(C)^{\circ})$, by going to the dual spaces. Now by
Lemma \ref{lemma: the strong condition W'}, in order to prove the Whitney condition
(B), we just need to show that there exists a constant $A$ such that 
\[ d(H^0(C,\omega_C(-\overline{D}),H^0(C,\omega_C(-D_k)))\leq A\cdot d(L,L_k)=A\cdot d(\overline{D},D_k),\]
where the distance function on the left is induced by an inner product on the vector
space $H^0(C,\omega_C)$ and the distance functions on the right are induced by the
Euclidean norm on a neighborhood of $\overline{D}$ in $C_{g-1-2i}$ and a neighborhood of $L$ in $\Jac(C)$, respectively. The last equality comes from the fact that $D_k\in U_{g-1-2r}$ and $\delta_{(g-1-2r)/2}$ is an isomorphism over $U_{g-1-2r}$. Since the
hyperelliptic map $h:C\to \bP^1$ is either a local isomorphism (off the branch locus) or
locally of the form $t \mapsto t^2$ (on the branch locus), we can push everything
down to $\bP^1$; there, it sufficies to prove that there exists $A$ so that
\[ d(H^0(\bP^1,\cO_{\bP^1}(g-1-h_{\ast}\overline{D})),H^0(\bP^1,\cO_{\bP^1}(g-1-h_{\ast}D_k)))\leq A\cdot d(h_{\ast}\overline{D},h_{\ast}D_k),\]
which follows from the interpretation of the space $H^0(\bP^1,\cO_{\bP^1}(g-1-h_{\ast}D))$ as the space of degree $g-1$ homogeneous polynomials vanishing along the divisor $h_{\ast}D$ and explicit computations.\footnote{Botong Wang pointed out that one can view this as a Lipschitz property of the map between compact manifolds $\Sym^{g-1-2i}\bP^1 \to \mathrm{Grass}(H^0(\bP^1,\cO_{\bP^1}(g-1)),2i)$ which sends $E$ to $H^0(\bP^1,\cO_{\bP^1}(g-1-E))$. }

For the pair $(\Jac(C)-\Theta, W^{i}_{g-1}(C)^{\circ})$, Condition (W) is vacuous
because $\Jac(C)$ is a complex manifold (using the property (1) of the distance
function in Fact \ref{fact about distance function}).
\end{proof}

\section{Questions and open problems}\label{sec: questions}

The log resolution of the hyperelliptic theta divisor is rather intricate. To have a better understanding of it, we ask
\begin{question}
Is there a modular interpretation of the log resolution in Theorem \ref{theorem: log resolution of hyperelliptic theta divisors}?
\end{question}

Let $C$ be a Brill-Noether general curve. The Brill-Noether varieties $W^{r}_{g-1}(C)$ behave like generic determinantal varieties. It is natural to ask for an extension of our results:

\begin{problem}
Prove that Theorem \ref{theorem: log resolution of hyperelliptic theta divisors} and Proposition \ref{prop: Brill Noether are Whitney} hold for such a curve $C$.\footnote{Note added Nov 2024: this problem is solved now by Budur in \cite{budur2024localstructurethetadivisors}.}
\end{problem}

\newpage
\appendix

\section{Reducedness of $W^r_d(C)$}

In this appendix, we provide the proof of Proposition \ref{prop: reducedness of Wrd}. This result is not needed for the paper because in Theorem \ref{theorem: log resolution of hyperelliptic theta divisors} we can always take $W^r_d(C)$ with its induced reduced structures. While this result is not essential, we include it here in response to a question posed by Nero Budur \cite{BudurDoan}.

Let $C$ be a smooth projective curve. Let $r,d$ be nonnegative integers. Set-theoretically, $W^r_d(C)$ is the space of line bundles $L\in \Pic^{d}(C)$ such that $h^0(L)\geq r+1$. Its scheme structure can be defined as follows, which is a variant of the method in \cite[Chapter IV,\S 3]{ACGH}. Let $L\in \Pic^{d}(C)$ be a closed point, as in the proof of Lemma \ref{lemma: reducedness of tangent cone} below, one can produce a vector bundle map $A:E^0\to E^1$ over $\Pic^d(C)$ where $E^0,E^1$ are vector bundles of rank $h^0(L)$ and $h^1(L)$ (see \eqref{eqn: definition of Wrd}). Then $W^r_d(C)$ is defined, in a neighborhood of $L$, as the $(h^0(L)-r)$-th determinantal variety associated to $A$; equivalently, it is cut out by all $(h^0(L)-r+1)\times (h^0(L)-r+1)$ minors of $A$. 

\begin{prop}\label{prop: reducedness of Wrd}
Let $C$ be a smooth hyperelliptic curve of genus $g$. Let $d,r\in \mathbb{N}$ be integers such that $0\leq r\leq d\leq g$. Then the Brill-Noether variety $W^r_d(C)$ is reduced.
\end{prop}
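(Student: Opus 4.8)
The plan is to equip $W^r_d(C)$ with its natural scheme structure as the determinantal locus where a morphism of vector bundles $\gamma\colon \cE\to\cF$ on $\Pic^d(C)$ drops rank by at least $r+1$ (as in \cite[Ch.~IV]{ACGH}), and then to deduce reducedness from Serre's criterion: a subscheme of the smooth variety $\Pic^d(C)$ is reduced once it is generically reduced ($R_0$) and has no embedded components ($S_1$). By the hyperelliptic decomposition recorded in the proof of Lemma \ref{lemma: intersection}, $W^r_d(C)$ is irreducible, being the image of the Abel--Jacobi map $C_{d-2r}\to\Pic^d(C)$, $D\mapsto \cO_C(D)\otimes rg^1_2$; when $d<2r$ it is empty by Clifford's inequality and there is nothing to prove. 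Thus $R_0$ reduces to smoothness at a general point of the top stratum $W^r_d\setminus W^{r+1}_d$, and the whole difficulty is concentrated in $S_1$.

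For $R_0$ I would compute the Zariski tangent space at a general $L\in W^r_d\setminus W^{r+1}_d$ through the cup-product map $\mu_0\colon H^0(C,L)\otimes H^0(C,\omega_C\otimes L^{-1})\to H^0(C,\omega_C)$, using the identification $T_L W^r_d=(\operatorname{Im}\mu_0)^{\perp}$. Writing $L=rg^1_2\otimes\cO_C(E)$ with $E$ a general effective divisor of degree $d-2r$ (so $h^0(\cO_C(E))=1$), the same hyperelliptic bookkeeping that underlies Lemma \ref{lemma: conormal of abel jacobi maps} and Lemma \ref{lemma: conormal of addition maps} shows that $H^0(C,L)=s_E\cdot h^\ast H^0(\bP^1,\cO_{\bP^1}(r))$ and that the sections of $\omega_C\otimes L^{-1}$ and of $\omega_C$ are likewise pullbacks under $h\colon C\to\bP^1$. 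Multiplying these pullbacks, one finds $\operatorname{Im}\mu_0=s_E\cdot h^\ast\bigl(\bar s_E\cdot H^0(\bP^1,\cO_{\bP^1}(g-1+2r-d))\bigr)$, where $\bar s_E$ cuts out $h_\ast E$; hence $\operatorname{rank}\mu_0=g+2r-d$ and $\dim T_L W^r_d=g-(g+2r-d)=d-2r=\dim W^r_d$. So $W^r_d$ is smooth along $W^r_d\setminus W^{r+1}_d$, which gives $R_0$.

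The main obstacle is $S_1$, which I would obtain by proving the stronger statement that $W^r_d$ is Cohen--Macaulay. The usual Eagon--Northcott/Hochster--Eagon criterion does not apply, because for hyperelliptic curves the Petri map fails to be injective and the true codimension $g-(d-2r)$ of $W^r_d$ is strictly smaller than the generic determinantal codimension once $r\ge1$. Instead I would analyse the local structure of $W^r_d$ transverse to each deeper stratum $W^{r+a}_d\setminus W^{r+a+1}_d$. Pushing the minors of $\gamma$ down along $h\colon C\to\bP^1$ and using the identification of the relevant conormal data with secant bundles $B^{\bullet}\bigl(\cO_{\bP^1}(g-1-h_\ast D)\bigr)$ over symmetric products of $\bP^1$ (Lemma \ref{lemma: conormal of abel jacobi maps}(b) and Corollary \ref{corollary: conormal of abel jacobi maps}), the equations of a transverse slice become the catalecticant minors defining a secant variety of a rational normal curve --- exactly the varieties that surface in Corollary \ref{corollary: generic structure of exceptional divisors}. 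Since secant varieties of rational normal curves are arithmetically Cohen--Macaulay and reduced, each such slice is the affine cone over a Cohen--Macaulay, reduced variety (a hypersurface when $r=0$), and therefore $W^r_d$ is, analytically locally, a product of a smooth factor with such a cone.

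Granting this local identification at every stratum, $W^r_d$ is Cohen--Macaulay, hence satisfies $S_1$, and together with $R_0$ Serre's criterion shows that $W^r_d$ is reduced. The step I expect to be genuinely delicate is upgrading the first-order (tangent-cone and conormal-bundle) information that the paper already controls to a statement about the \emph{full} local equations of the slice --- that is, verifying that no higher-order terms perturb the slice away from the cone over the secant variety; this is precisely where the explicit hyperelliptic geometry, namely that all the relevant sections are pullbacks from $\bP^1$, is indispensable. As a fallback, one could instead try to show that the Abel--Jacobi resolution $C_{d-2r}\to W^r_d$ is a rational resolution and compare the determinantal structure sheaf with $\delta_{\ast}\cO_{C_{d-2r}}$, but this comparison ultimately rests on the same transverse-slice computation.
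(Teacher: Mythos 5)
Your $R_0$ step is fine --- the Petri-rank computation at a general point of $W^r_d\setminus W^{r+1}_d$ is correct and does give generic smoothness --- but the $S_1$/Cohen--Macaulay half contains a genuine gap, and it is the one you flag yourself. Everything rests on the claim that, analytically locally near a point of a deeper stratum, $W^r_d$ is a product of a smooth factor with the affine cone over a secant variety of a rational normal curve. The conormal-bundle and Petri-map computations you invoke (Lemma \ref{lemma: conormal of abel jacobi maps}, Corollary \ref{corollary: conormal of abel jacobi maps}) are first-order: at best they control the tangent cone, never the full local ring, and passing from "the tangent cone is a cone over a secant variety" to "the singularity is analytically the cone over a secant variety" is false in general and unproven here. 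Worse, even the identification of the tangent cone is not formal: the ideal of initial forms of the determinantal ideal contains the catalecticant minors of the linearized matrix, but may a priori be strictly larger (cancellations among higher-order terms of the minors can produce new initial forms), so "the equations of a transverse slice become the catalecticant minors" is exactly the statement that needs a theorem, not a computation. Your fallback via rational resolutions, as you admit, begs the same question, so the proposal does not close.

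The paper's proof shows how to avoid ever needing the full local structure. Its Lemma \ref{lemma: reducedness via tangent cones} (flat degeneration of a local ring to its tangent cone, plus openness of reducedness in flat families) reduces the whole problem to proving that the tangent cone $TC_L W^r_d$ is reduced at every point $L$; no $S_1$, no Cohen--Macaulayness, no analytic product decomposition. The tangent cone itself is then pinned down not by naive linearization but by the Green--Lazarsfeld tangent cone theorem from generic vanishing theory \cite{GL87}, which yields $\cI_1 \subseteq \cI_{TC_L W^r_d}$, where $\cI_1$ is generated by the minors of the linear part $A_1$; the hyperelliptic structure shows (as in your section-pullback computation) that the Petri map factors through multiplication $\Sym^s V \otimes \Sym^{g-1-d+s}V \to \Sym^{g-1-d+2s}V$ with $\dim V = 2$, so $A_1$ is a catalecticant matrix. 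Eisenbud's theorem \cite{Eisenbud88} that catalecticant minors cut out the reduced, irreducible secant variety of a rational normal curve, combined with the dimension count $2(s-r)+(d-2s)=d-2r=\dim W^r_d$, forces $\cI_1$ to be radical with the same zero set as the tangent cone, and the sandwich $\cI_1 \subseteq \cI_{TC} \subseteq \sqrt{\cI_{TC}}=\cI_1$ collapses. If you insist on keeping the Serre-criterion skeleton, the repair is the same: reducedness (and depth) pass from $\operatorname{gr}_{\fm}$ of a local ring to the local ring itself, so it suffices to prove the tangent cone is reduced and Cohen--Macaulay --- but at that point you have reproduced the paper's argument with an extra, unnecessary layer.
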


We recall the following result saying that reducedness can be checked on the level of tangent cones.
\begin{lemma}\label{lemma: reducedness via tangent cones}
Let $Z\subseteq X$ be a closed subscheme of a smooth variety $X$ and $x\in Z$ be a closed point. If the tangent cone $TC_xZ\subseteq T_xX$ is reduced, then $Z$ is reduced in an open neighborhood of $x$.
\end{lemma}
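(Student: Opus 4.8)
The plan is to split the statement into a routine geometric openness step and a local commutative-algebra step, the latter being the crux.

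First I would reduce the problem to the local ring $A \colonequals \cO_{Z,x}$. Since $Z$ is Noetherian, its nilradical $\cN \subseteq \cO_Z$ is a coherent sheaf of ideals, and formation of the nilradical commutes with localization, so the stalk $\cN_x$ equals the nilradical of $A$. Consequently the non-reduced locus $\supp(\cN) \subseteq Z$ is closed, and it suffices to show that $A$ is reduced: if $A$ is reduced then $\cN_x = 0$, so $x \notin \supp(\cN)$, and $Z$ is reduced on the open complement of $\supp(\cN)$, which is a neighborhood of $x$.

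Next comes the key point. Recall that the tangent cone is $TC_x Z = \operatorname{Spec}\bigl(\operatorname{gr}_{\fm} A\bigr)$, where $\fm$ is the maximal ideal of $A$ and $\operatorname{gr}_{\fm} A = \bigoplus_{d \geq 0} \fm^d/\fm^{d+1}$, so the hypothesis says precisely that this graded ring is reduced. I would prove the implication ``$\operatorname{gr}_{\fm} A$ reduced $\implies$ $A$ reduced'' by the method of initial forms. For a nonzero $a \in A$, Krull's intersection theorem (valid since $A$ is Noetherian local, giving $\bigcap_d \fm^d = 0$) yields a well-defined order $d$ with $a \in \fm^d \setminus \fm^{d+1}$, hence a nonzero initial form $\operatorname{in}(a) \in \fm^d/\fm^{d+1} \subseteq \operatorname{gr}_{\fm} A$. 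Now suppose $a$ is nilpotent, say $a^k = 0$. By the definition of the product on $\operatorname{gr}_{\fm} A$, the element $\operatorname{in}(a)^k$ is the class of $a^k$ in $\fm^{kd}/\fm^{kd+1}$; but $a^k = 0$ so this class vanishes, giving $\operatorname{in}(a)^k = 0$. Thus $\operatorname{in}(a)$ is a nonzero nilpotent in $\operatorname{gr}_{\fm} A$, contradicting reducedness. Therefore $A$ has no nonzero nilpotents, as required.

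The main obstacle is this local step, and specifically the care needed to guarantee that $\operatorname{in}(a)$ is \emph{nonzero}: this is exactly where Krull's intersection theorem and the Noetherian hypothesis are indispensable, since without a finite order the passage to initial forms would break down. By contrast, the openness step is entirely formal once one observes that the nilradical is coherent and compatible with localization.
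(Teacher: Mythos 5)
Your proof is correct, but it takes a genuinely different route from the paper. The paper's argument is geometric: equip $TC_xZ$ with its natural scheme structure, use the Rees-algebra degeneration to realize (a neighborhood of $x$ in) $Z$ as the general fiber of a flat family over $\mathbb{A}^1$ whose special fiber is $TC_xZ$, and then invoke the fact that (geometric) reducedness of fibers is an open condition in flat families of finite type, citing EGA IV, Th\'eor\`eme 12.1.1(vii). You instead work directly with $A = \cO_{Z,x}$ and its associated graded ring: after the (shared, formal) observation that the non-reduced locus is closed, you prove the purely algebraic implication that $\operatorname{gr}_{\fm}A$ reduced forces $A$ reduced, via initial forms and Krull's intersection theorem. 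Both arguments hinge on the same identification, namely that reducedness of $TC_xZ$ means reducedness of $\operatorname{gr}_{\fm}A$; the difference is what one does with it. Your approach is more elementary and self-contained -- it needs nothing beyond Noetherianness, whereas the paper outsources the real content to a nontrivial EGA theorem (which, over a non-algebraically-closed field, would also require the small extra step of comparing reduced with geometrically reduced fibers; over $\C$ this is harmless). The paper's approach, on the other hand, is shorter given the citation and illustrates a reusable pattern: any property of the tangent cone that is open in flat families propagates to a neighborhood of the point. One detail of yours worth highlighting as correct and necessary: you use that $\operatorname{in}(a)^k$ is by definition the class of $a^k$ in $\fm^{kd}/\fm^{kd+1}$ (rather than any ill-defined ``initial form of $a^k$''), which is exactly what makes the nilpotence argument go through.
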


\begin{proof}
Equip the tangent cone $TC_xZ$ with its natural scheme structure, then there is a flat specialization of (a neighborhood of $x$ in)  $Z$ to $TC_xZ$.  The desired result follows from the fact that reducedness is an open condition in flat families, c.f. \cite[Theorem 12.1.1 (vii)]{EGA}.
\end{proof}

By Lemma \ref{lemma: reducedness via tangent cones}, Proposition \ref{prop: reducedness of Wrd} is reduced to the following
\begin{lemma}\label{lemma: reducedness of tangent cone}
The tangent cone $TC_LW^r_d(C)$ is reduced for any $L\in W^r_d(C)$.
\end{lemma}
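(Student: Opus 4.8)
The plan is to combine Kempf's description of the tangent cone to a Brill--Noether variety with the special structure of linear series on a hyperelliptic curve, reducing everything to the reducedness of a Hankel (catalecticant) determinantal scheme. Fix $L\in W^r_d(C)$ and put $m=h^0(C,L)\geq r+1$; by Riemann--Roch $h^1(C,L)=m-d+g-1$. Recall the cup-product (Petri) map
\[ \mu_0\colon H^0(C,L)\otimes H^0(C,\omega_C\otimes L^{-1})\longrightarrow H^0(C,\omega_C), \]
which, via Serre duality and the identification $T_L\Pic^d(C)\cong H^0(C,\omega_C)^{\ast}$, dualizes to a linear map
\[ \mu\colon T_L\Pic^d(C)\longrightarrow \mathrm{Hom}\bigl(H^0(C,L),\,H^0(C,\omega_C\otimes L^{-1})^{\ast}\bigr),\qquad \mu(\xi)(p)(q)=\xi\bigl(\mu_0(p\otimes q)\bigr). \]
By the determinantal analysis of the tangent cone to $W^r_d(C)$ (Kempf's theorem; see \cite[Chapter IV]{ACGH}), the tangent cone $TC_L W^r_d(C)\subseteq T_L\Pic^d(C)$ is cut out \emph{scheme-theoretically} by the $(m-r)\times(m-r)$ minors of $\mu$, i.e.\ it is the locus $\{\,\xi : \mathrm{rank}\,\mu(\xi)\leq m-r-1\,\}$. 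For $m=r+1$ this degenerates to the linear scheme $\{\mu(\xi)=0\}$, which is automatically reduced.

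First I would make $\mu_0$ explicit by pushing forward along the hyperelliptic map $h\colon C\to\bP^1$. As in the proof of Lemma \ref{lemma: intersection}, write $L\cong r'g^1_2\otimes\cO_C(E)$ with $h^0(\cO_C(E))=1$ and $m=r'+1$; Clifford's inequality gives $r'\leq d/2$. Using $\omega_C\cong h^{\ast}\cO_{\bP^1}(g-1)$, the projection formula, and $h_{\ast}\cO_D\cong\cO_{h_{\ast}D}$ exactly as in \eqref{eqn: canonical bundle and xi_D}, one obtains canonical isomorphisms
\[ H^0(C,L)\cong H^0\bigl(\bP^1,\cO_{\bP^1}(r')\bigr),\qquad H^0(C,\omega_C\otimes L^{-1})\cong H^0\bigl(\bP^1,\cO_{\bP^1}(g-1-r'-h_{\ast}E)\bigr), \]
\[ H^0(C,\omega_C)\cong H^0\bigl(\bP^1,\cO_{\bP^1}(g-1)\bigr), \]
under which $\mu_0$ becomes, after removing the fixed factor cut out by $E$, the multiplication map of global sections of line bundles on $\bP^1$. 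The image of $\mu_0$ is then the $(g-d+2r')$-dimensional space of degree $g-1$ forms divisible by $h_{\ast}E$, so that $\ker\mu=(\mathrm{Im}\,\mu_0)^{\perp}$ has dimension $d-2r'$.

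Next I would identify the determinantal scheme. In suitable coordinates on $T_L\Pic^d(C)$ the matrix $\mu(\xi)$ is the generic Hankel matrix of format $(r'+1)\times(g+r'-d)$, whose entries are the $g+2r'-d$ linear coordinates of $\xi$ restricted to $\mathrm{Im}\,\mu_0$ (this is where $r'\leq d/2$ enters, guaranteeing the Hankel entries fit among the $g$ coordinates of $\xi$); the remaining $d-2r'$ coordinates, corresponding to $\ker\mu$, appear freely. Hence $TC_L W^r_d(C)$ is the product of an affine space of dimension $d-2r'$ with the affine cone over the locus $\{\mathrm{rank}\leq m-r-1\}$ of this Hankel matrix. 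The latter cone is precisely the affine cone over a secant variety of a rational normal curve --- namely the $(r'-r-1)$-th secant variety of the rational normal curve of degree $g+2r'-d-1$ --- exactly the secant varieties $\Sec^{\bullet}(\bP^1)=\beta_{\bullet}\bigl(B^{\bullet}(\cO_{\bP^1}(\cdot))\bigr)$ that appear in \S\ref{sec:Secant bundles and maps between them}.

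Finally I would invoke reducedness: the ideal of minors of a generic Hankel matrix is prime (equivalently, the secant varieties of a rational normal curve are reduced, and in fact arithmetically Cohen--Macaulay), by the classical theory of Hankel determinantal rings. Since a product of a reduced scheme with an affine space is again reduced, it follows that $TC_L W^r_d(C)$ is reduced for every $L$, which by Lemma \ref{lemma: reducedness via tangent cones} completes the proof. The main obstacle I anticipate is the bookkeeping in the middle two steps: verifying that Kempf's scheme structure (the minors of $\mu$) matches, under the pushforward isomorphisms, the standard catalecticant structure on the secant variety, and in particular tracking which coordinates of $\xi$ enter the Hankel matrix so that the ``cylinder'' decomposition is valid. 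Once the Petri map is identified with multiplication on $\bP^1$, reducedness itself is a citation; the content lies in that identification.
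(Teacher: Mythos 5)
Your later steps --- pushing the Petri map down to $\bP^1$ via the hyperelliptic involution, recognizing the catalecticant/Hankel structure, identifying the rank loci with (cones over) secant varieties of a rational normal curve, and citing reducedness of Hankel determinantal schemes --- coincide with the paper's Claim 1 and Claim 2, and your numerology agrees with the paper's (your $r'$ is the paper's $s$, and both computations give total dimension $2(s-r)+(d-2s)=d-2r$). The genuine gap is your very first step: the assertion that ``Kempf's theorem'' identifies $TC_LW^r_d(C)$ \emph{scheme-theoretically} with the determinantal scheme of $(m-r)\times(m-r)$ minors of $\mu$. Kempf's theorem gives this for $W^0_d$ (maximal minors of the linearized matrix), and the analogous statement for $r\geq 1$ is available when the Petri map is injective, so that $\mu$ is surjective onto the full matrix space and the determinantal scheme is a linear pullback of the generic one. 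Neither hypothesis holds in the case you actually need: $r\geq 1$, $h^0(L)=m\geq r+2$, on a hyperelliptic curve, where Petri fails. In general, for a locus cut out by minors of a matrix $A=A_1+(\text{higher order})$ vanishing at the point, one only has the containment $\cI_1\subseteq \cI_{TC}$ (each minor of $A_1$ is either zero or the leading form of the corresponding minor of $A$); equality --- even set-theoretic equality --- can fail, and it holds precisely when the linearized determinantal scheme has the expected dimension and good (Cohen--Macaulay, reduced) structure. In the Hankel situation that structure is exactly Eisenbud's theorem, i.e.\ the input you invoke only \emph{after} using the equality. As written, the argument is circular, or at least mis-ordered: the opening citation presupposes what the subsequent steps are needed to establish.

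This is also exactly where the paper's proof differs from yours. The paper never claims the scheme-theoretic equality up front: it quotes the tangent-cone theorem of Green--Lazarsfeld only for the containments $\cI_1\subseteq \cI_{TC_LW^r_d}\subseteq \sqrt{\cI_{TC_LW^r_d}}$, then uses Claims 1 and 2 to see that $V(\cI_1)$ is reduced and irreducible of dimension $d-2r$, which equals $\dim TC_LW^r_d$; since a closed subscheme of an irreducible reduced variety of the same dimension has the same support, all three ideals coincide, and reducedness (together with the equality you wanted) falls out at the end rather than being assumed at the start. Your proposal can be repaired by reordering in the same spirit: first the Hankel identification, then Eisenbud's dimension/CM/reducedness statement, and only then a Kempf-style flat degeneration (or the containment-plus-dimension argument) to identify the tangent cone with the determinantal scheme.
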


\begin{proof}
To simplify the notation, we denote $W^r_d=W^r_d(C)$. Fix $\cL$ a Poincar\'e line bundle on $C\times \Pic^{d}(C)$ and let $\pr_2:C\times \Pic^{d}(C)\to \Pic^{d}(C)$ be the second projection. Let $L\in W^r_d$ be a line bundle of degree $d$ and  assume $h^0(L)=s+1$ with $s\geq r$. In a neighborhood of $L$ in $\Pic^d(C)$, we can produce a minimal complex computing $W^r_d$, by a variant of the method in \cite[Chapter IV, \S 3]{ACGH}. Note that we can always pick a point $p\in C$ such that $h^1(L(p))=h^1(L)-1$ and $H^0(L)=H^0(L(p))$. Iterating this, we can pick an effective divisor $D$ of degree $h^1(L)=g-d+s$ with the property that $H^1(L(D))=0$ and in the short exact sequence
\[ 0 \to L \to L(D) \to L(D)\otimes \cO_D \to 0,\]
the induced connecting map $H^0(D,L(D)\otimes \cO_D)\to H^1(C,L)$ is an isomorphism (equivalently, $H^0(C,L)\to H^0(C,L(D))$ is an isomorphism). Denote by $\cD=\pr_2^{\ast}D$ the effective divisor on $C\times \Pic^d(C)$. Then on some neighborhood of the point $L$, we have a short exact sequence
\[ 0 \to \pr_{2,\ast}\cL(\cD) \to \pr_{2,\ast}(\cL(\cD)\otimes \cO_{\cD}) \to R^1\pr_{2,\ast}\cL \to 0,\]
where $R^1\pr_{2,\ast}\cL(\cD)$ vanishes on the neighborhood in question. Here $\pr_{2,\ast}\cL=0$ because it is torsion-free and vanishes at a general point in the neighborhood of $L$. This gives us a presentation
\begin{equation}\label{eqn: definition of Wrd} 0 \to E^0 \xrightarrow{A} E^1 \to R^1\pr_{2,\ast}\cL \to 0\end{equation}
where $E^0$ and $E^1$ are vector bundles of rank $h^0(L)=s+1$, respectively $h^1(L)=g-d+s$. Moreover, the differential, viewed as a matrix $A$, vanishes at the point $L$. Let $A_1$ be the linear part of $A$; that has entries in $\fm/\fm^2$, where $\fm$ is the maximal ideal at $L$.

Now $W^r_d$ is defined, in a neighborhood of the point $L$, by the vanishing of all the $(s-r+1)\times (s-r+1)$ minors of $A$. Because for $L'\in W^r_d$, the condition is
\begin{align*}
h^0(L')\geq r+1 &\Leftrightarrow h^1(L')\geq g-d+r \\
&\Leftrightarrow\mathrm{rank}(A)_{L'}\leq (g-d+s)-(g-d+r)=(s-r).
\end{align*}
It follows from the tangent cone theorem in generic vanishing theory (c.f. \cite[Theorem 4]{GL87}) that one has the following containments:
\[ \cI_{1} \subseteq \cI_{TC_L W^r_d} \subseteq \sqrt{\cI_{TC_L W^r_d}},\]
where the first ideal is generated by all the $(s-r+1)\times (s-r+1)$ minors of $A_1$. If one knows that the first ideal  $ \cI_{1} $ is a radical ideal, and that both $\cI_1$ and  $\sqrt{\cI_{TC_L W^r_d}}$ define the same conical subset in $T_L\Pic^d(C)$ (forgetting about the scheme structure), then the tangent cone $TC_L W^r_d$ is reduced. 

Since $W^d_r\cong W_{d-2r}$ as sets, one has 
\[ \dim TC_LW^r_d=\dim W^r_d=d-2r.\] 
By the discussion above, it suffices to show that the $(s-r+1)\times (s-r+1)$ minors of the matrix $A_1$ defines a reduced, irreducible subscheme of $T_L\Pic^d(C)$ of dimension $d-2r$. This boils down to the following two claims.

\textbf{Claim 1}: The matrix $A_1$ is a Hankel/Catalecticant matrix,  i.e.
\[ A_1 = \left(\begin{matrix} x_1 & x_2 & \cdots & x_{g-d+s} \\
x_2 & x_3 & \cdots &x_{g-d+s+1} \\
\cdots  & \cdots & \cdots & \cdots\\
x_{s+1} & \cdots & \cdots & x_{g-d+2s}
\end{matrix}\right)\]
up to a change of local coordinates.

\textit{Proof of Claim 1}: We learned this argument from Nero Budur, see \cite[Proposition 8.17]{BudurDoan}. By \cite{ACGH}, the matrix $A_1$ is the one given by the map
\[ H^0(L) \to H^1(L)\otimes H^0(\omega_C),\]
which is equivalent to the Petri map
\[ \pi_L:H^0(L)\otimes H^0(\omega_C\otimes L^{-1}) \to H^0(\omega_C).\]
Since $C$ is hyperelliptic and $h^0(L)=s+1$, we can write 
\begin{align*}
L&=sg^1_2+p_1+\cdots+p_{d-2s}\\
\omega_C\otimes L^{-1}&=(g-1-s-(d-2s))g^1_2+q_1+\cdots+q_{d-2s},
\end{align*}
where $p_i+q_i$ is a hyperelliptic pair for each $1\leq i\leq d-2s$ and no two $p_i$ lie in the same fiber of the hyperelliptic involution $C\to \bP^1$. Then the Petri map corresponds to
\[ H^0(\bP^1,\cO(s))\otimes H^0(\bP^1,\cO(g-1-d+s))\to H^0(\bP^1,\cO(g-1-d+2s)) \to H^0(\bP^1,\cO(g-1))\]
The last map is the tensor product with the section $\eta\in H^0(\bP^1,\cO(d-2s))$, where $\eta$ is the product of all linear forms defining the image of $p_i$ in $\bP^1$ for $1\leq i\leq d-2s$. Write $V=H^0(\bP^1,\cO(1))$, then the Petri map is the natural multiplication map
\[ \Sym^sV \otimes \Sym^{g-1-d+s}V \to \Sym^{g-1-d+2s}V,\]
which clearly gives a Catalecticant matrix since $\dim V=2$.

\textbf{Claim 2}: Let $C_{v,w}$ be a $v\times w$ Catalecticant matrix with $v\geq w$, i.e.
\[ C_{v,w} = \left(\begin{matrix} x_1 & x_2 & \cdots & x_{w} \\
x_2 & x_3 & \cdots &x_{w+1} \\
\cdots  & \cdots & \cdots & \cdots\\
x_{v} & \cdots & \cdots & x_{v+w-1}
\end{matrix}\right)\]
Then for $k<w$, the ideals of $(k+1)\times (k+1)$ minors of $C_{v,w}$ defines a reduced irreducible subscheme $Z$ of dimension $2k$ in $\C^{v+w-1}$.

\textit{Proof of Claim 2}: We use notations in \cite{Eisenbud88}. Let $M=\mathrm{Cat}(v,w)\subseteq \bP\C^{vw}$ be the Catalecticant space, which is of dimension $v+w-2$ (c.f. \cite[Page 561]{Eisenbud88}). Let $M_k$ be the subscheme of matrices of rank $\leq k$ in $M$, the linear space corresponds to all the minors of $C_{v,w}$ of size $k+1$.  By \cite[Proposition 4.3]{Eisenbud88}, one has
\[ \mathrm{codim}_M M_k=v+w-1-2k,\]
and $M_k$ is the $k$-secant variety of a rational normal curve. Thus 
\[\dim M_k=\dim M-(v+w-1-2k)=(v+w-2)-(v+w-1-2k)=2k-1\]
and $M_k$ is irreducible. Moreover, Eisenbud \cite[Proposition 4.3 and after]{Eisenbud88}
observes that $M_k$ is reduced. Therefore the corresponding space $Z\subseteq \C^{v+w-1}$ is reduced, irreducible and has dimension $2k$.

Now we can finish the proof of this lemma. If $d\geq g-1$, then $s-r<s+1\leq g-d+s$; if $d=g$, then we can assume $r\geq 1$ ($W^0_g(C)$ is reduced by a theorem of Kempf) and still get $s-r< s=g-d+s\leq s+1$. Therefore we can apply Claim 1 and 2 to obtain that the $(s-r+1)\times (s-r+1)$ minors of the matrix $A_1$ defines a reduced, irreducible subscheme in $T_L\Pic^d(C)=\C^g$ of dimension $2(s-r)+(d-2s)$ (because only the variables $x_1,\cdots,x_{g-d+2s}$ show up in the matrix $A_1$ and the other variables provide an additional $d-2s$ dimensions). This gives what we want and therefore we finish the proof that $TC_LW^r_d$ is reduced. 

As a consequence, $W^r_d(C)$ is reduced for any $0\leq r\leq d\leq g$.

\end{proof}

\bibliographystyle{abbrv}
\bibliography{main}{}

\vspace{2cm}

\footnotesize{
\textsc{Department of Mathematics, Stony Brook University, Stony Brook, New York 11794, United States} \\
\indent \textit{E-mail address:} \href{mailto:cschnell@math.stonybrook.edu}{cschnell@math.stonybrook.edu}

\vspace{\baselineskip}

\textsc{Department of Mathematics, University of Kansas, Lawrence, Kansas 66045, United States} \\
\indent \textit{E-mail address:} \href{mailto:ruijie.yang@hu-berlin.de}{ruijie.yang@ku.edu}
}

\end{document}